%\documentclass[11pt,leqno]{amsart}
%\usepackage[top=2.5 cm,bottom=2 cm,left=2.5 cm,right=2 cm]{geometry}
%\usepackage[utf8]{inputenc}
%\usepackage{exam}
%\setmathfont{XITS Math}
\documentclass[11pt]{amsart}
%\addtolength{\hoffset}{-1.5cm}\addtolength{\textwidth}{2.5cm}
%\addtolength{\voffset}{-1.5cm}\addtolength{\textheight}{2cm}
\usepackage{amssymb,amsfonts,graphicx,amsmath,amsthm,amstext,latexsym,amsfonts}
\usepackage{graphicx,epsfig}
\usepackage{bbm}
\usepackage{t1enc}
\usepackage{mathrsfs}
%\usepackage{addfont}
%\addfont{rsfs10}{\rsfs}
\usepackage{subfig}
\usepackage{hyperref}
\usepackage{a4wide}
%\pagestyle{plain}
%\usepackage{pstricks-add}
% PACKAGE FIGURE TIKZ
\usepackage{tikz}
\usetikzlibrary{intersections}
\usepackage{egothic}
\usepackage [T1] {fontenc}
\theoremstyle{plain}

\numberwithin{equation}{section}
\newtheorem{theorem}{Theorem}[section]
%[section]
%[section]
\newtheorem{lemma}[theorem]{Lemma}%[section]
%[section]
%[section]
\newtheorem{remark}[theorem]{Remark}%[section]

\usepackage{color}
\definecolor{darkred}{rgb}{0.8,0,0}
\definecolor{darkblue}{rgb}{0,0,0.7}
\definecolor{darkgreen}{rgb}{0,0.4,0}

%%
 % back to normal text
 %

\newcommand{\eps}{\varepsilon}

\newcommand{\R}{{\mathbb R}}

\newcommand{\W}{{\mathcal W}}

\newcommand{\V}{{\mathcal V}}

\newcommand{\un}{{\rm 1\kern -2.5pt l}}
\newcommand{\tr}{{\rm Tr}}

\def\w{\mathbf{w}}
\def\u{\mathbf{u}}

\def\vv{\mathbf{v}}
\def\yy{\mathbf{y}}
\def\n{\mathbf{n}}

\def\xx{\mathbf{x}}
\def\aa{\mathbf{a}}

\def\vv{\mathbf{v}}

\def\eps{\varepsilon}
\def\R{{\mathbb R}}

\def\H{{\mathcal H}}

\def\eps{\varepsilon}
\def\R{{\mathbb R}}

\def\H{{\mathcal H}}

\def\F{{\mathcal F}}

\def\E{{\mathbb{E}}}

\def\argmin{\mathop{{\rm argmin}}\nolimits}

\def\Tr{\mathop{{\rm Tr}}\nolimits}

\def\dv{\mathop{{\rm div}}\nolimits}

\def\u{\mathbf{u}}
\def\v{\mathbf{v}}

\def\z{\mathbf{z}}
\def\v{{\bf v}}
\def\w{{\bf w}}

\def\x{{\bf x}}

\hyphenation{Ma-te-ma-ti-ca}
\hyphenation{Ma-te-ma-ti-ci}
\hyphenation{Piaz-za}

\def\Rot{\mathbf{R}}
\def\Id{\mathbf{I}}

\def\wconv{\rightharpoonup}
%%%%%%%%%%%%%%%%%%%%%%%%%%%%%%%%%%%%%%%%%%%%%%%%%% ABBREVIAZIONI E MACRO

\newcommand{\KKK}{\color{black}}

%%%%%%%%%%%%%%%%%%%%%%%%%%%%%%%%%%%%%%%%%%%%% NUMERAZIONE DEI TEOREMI

%%%%%%%%%%%%%%%%%%%%%%%%%%%%%%%%%%%%%%%%%%%%%% ABBREVIAZIONI MATEMATICHE

\renewcommand{\epsilon}{\varepsilon}
\newcommand{\beeq}{\begin{equation}}
\newcommand{\eneq}{\end{equation}}
\newcommand{\bear}{\begin{array}}
\newcommand{\enar}{\end{array}}
\newcommand{\bema}{\begin{displaymath}}
\newcommand{\enma}{\end{displaymath}}

\newcommand{\beea}{\begin{eqnarray}}
\newcommand{\enea}{\end{eqnarray}}

\newcommand{\om}{\Omega}

\newcommand{\bb}{\boldsymbol}

%%%%%%%%%%%%%%%%%%%%%%%%%%%%%%%%%%%%%%%%%%%%%%%%%% SPECIFICHE

%%%%%%%%%%%%%%%%%%%%%%%%%%%%%%%%%%%%%%%%%%%%%%%%% ABBREVIAZIONI DI TESTO

%%%%%%%%%%%%%%%%%%%%%%%%%%%%%%%%%%%%%%%%%%%%%%%%%%%%%%% LABELS

\newcommand{\lab}[1]{ \label{#1} }

%%%%%%%%%%%%%%%%%%%%%%%%%%%%%%%%%%%%%%%%%%%%%%%%%%%%%%%%%%%%%

\newenvironment{proofth1}{\removelastskip\par\medskip   % inizio e fine dimostrazione
\noindent{\bf Proof of {\rm {\bf Theorem \ref{mainth1}}}.}
\rm}{\penalty-20\null\hfill$\blacksquare$\par\medbreak} %%

%%%%%%%%%%%%%%%%%%%%%%%%%%%%%%%%%%%%%%%%%%%%%%%%%% SPECIFICHE
\def\Rot{\mathbf{R}}
\def\Id{\mathbf{I}}

\def\wconv{\rightharpoonup}

%%%%%%%%%%%%%%%%%%%%%%%%%%%%%%%%%%%%%%%%%%%%%%%%%% SPECIFICHE

%\usepackage{refcheck}

\title[Linearization of pure traction problems in incompressible elasticity]{Variational linearization of pure traction problems in incompressible elasticity}
%\title[]{Linear Elasticity as Gamma\,limit of Finite Elasticity\\
%  in the perspective of Neumann problems}
  \author{Edoardo Mainini}
\address[Edoardo Mainini]{Dipartimento di Ingegneria meccanica, energetica, gestionale e dei trasporti, 
  Universit\`a  degli studi di Genova, Via all'Opera Pia, 15 - 16145 Genova Italy.}
\email{mainini@dime.unige.it}
\urladdr{http://www.dime.unige.it/it/users/edoardo-mainini}

\author{Danilo Percivale}
\address[Danilo Percivale]{Dipartimento di Ingegneria meccanica, energetica, gestionale e dei trasporti, 
  Universit\`a  degli studi di Genova, Via all'Opera Pia, 15 - 16145 Genova Italy.}
\email{percivale@dime.unige.it}
%\urladdr{http://www.dime.unige.it/it/users/edoardo-mainini}

%   \author[]{Edoardo Mainini, Danilo Percivale}
%
%
% \address{Universit\`{a} di Genova, Dipartimento di Ingegneria Meccanica, Energetica, Gestionale e dei Trasporti,
%  Via all'Opera Pia, 15 - 16145 Genova, Italy}
% 
%%
%%%%%%%%%%%%%
%\date{\noindent \ \newline   {\bf AMS Classification Numbers (2010):\,} 49J45, 74K30, 74K35, 74R10.\\
  %{\bf Key Words:\,} Calculus of Variations, %Pure Traction problems, 
  %Linear Elasticity, Finite Elasticity,   %Critical points, 
 % Gamma-convergence,  Rubber-like materials}
%\qquad Research partially supported by \\
%\textsl{P.R.I.N. - }
%
\subjclass[2010]{49J45, 74K30, 74K35, 74R10}
\keywords{Calculus of Variations, %Pure Traction problems, 
  Linear Elasticity, Finite Elasticity,   %Critical points, 
  Gamma-convergence,  Rubber-like materials}
 %%%%%%%%%%%%%%%%%%%%%%%%%%%%%%%%%%%%%%%%%%%%%%%%%%%%%%%%%%%%%%%%%%%%%%%%%
\begin{document}
 \maketitle
\begin{abstract}
%We show that incompressible   linearized elasticity can be obtained as variational  limit of incompressible finite elasticity \MMM under equilibrated load conditions in pure traction problems.\\ \KKK
 We consider pure traction problems and we show that incompressible linearized elasticity can be obtained as variational  limit of incompressible finite elasticity   under suitable conditions on external loads. \KKK
%provided that loads are equilibrated and satisfy an additional compatibility condition}
%The result is shown for a large class of energy densities for rubber-like materials. 
\end{abstract}

%\begin{center}
%\large\sffamily DRAFT
%\end{center}
%\tableofcontents
%\begin{flushleft}
%  {\bf AMS Classification Numbers (2010):\,} 49J45, 74K30, 74K35, 74R10.\\
%  {\bf Key Words:\,} Calculus of Variations, %Pure Traction problems, 
%  Linear Elasticity, Nonlinear Elasticity, Finite Elasticity,   %Critical points, 
%  Gamma-convergence, Asymptotic analysis.
%  %nonlinear Neumann problems.\\
%\end{flushleft}

%~~~~~~~~~~~~~~~~~~~
\section{Introduction}

 Let us consider a hyperelastic body occupying a bounded open set $\Omega\subset\mathbb R^3$ in its reference configuration. 
%From a mathematical point of view, we  consider a hyperelastic body that occupies a bounded open region $\Omega\subset \mathbb R^3$ in its reference configuration. 
Equilibrium states under  a body force field ${\mathfrak f}: \Omega\to\mathbb R^3$ and a surface force field ${\mathfrak g}:\partial\Omega\to\mathbb R^3$ are obtained by minimizing  the total energy   \[
 \int_\Omega\mathcal W^I(x,\nabla\mathbf y(x))\,dx-\int_\Omega (\mathbf y(x)-x)\cdot{\mathfrak f}(x)\,dx-\int_{\partial\Omega}(\mathbf y(x)-x)\cdot{\mathfrak g}(x)\,d\mathcal H^2(x).
 \]
 Here, $\mathbf y:\Omega\to\mathbb R^3$ denotes the deformation field,   $\mathcal H^2$ denotes the surface measure, and
 %We assume the body to be incompressible. For,
%$\mathcal W^I$ is assumed to be frame indifferent and minimized at the identity with $\mathcal W^I(x,\mathbf I)=0$, so that without an external load $\mathbf y(x)=x$ is a minimizer of the total energy corresponding to the stress-free configuration $\Omega$. In order to take into account that the body is incompressible,
$\mathcal W^I:\Omega\times\mathbb R^{3\times 3}\to[0,+\infty]$  is   the incompressible strain energy density. We require incompressibility by letting $\mathcal W^I(x,\mathbf F)=+\infty$ whenever $\det \mathbf F\not =1$.
Moreover, we assume that %$\mathcal W^I(x,\mathbf F)=\mathcal W(x,\mathbf F)$ if $\det\mathbf F=1$,
  $\mathcal W^I(x,\cdot)$ 
is a frame indifferent function  that is minimized at $\mathbf F=\mathbf I$ with value $0$. %and is $C^2$ smooth around rotations. % Of course, the values of $\mathcal W$ outside the manifold $\det \mathbf F=1$ in $\mathbb R^{3\times 3}$ are irrelevant for the total energy.    
 
%We let $$\mathcal W(\mathbf F):= \mathcal W^I (x, (\det\mathbf F)^{-1/3}\,\mathbf F)+\omega(\det\mathbf F).$$ 
%We require $\omega(x,\cdot)$ to be $C^2$ smooth in a neighbor of $1$ and $\mathcal W^I(x,\cdot)$ to be $C^2$ smooth in a neighbor of $SO(3)$ on the special linear group $SL(3)$. Therefore, $\mathcal W$ is $C^2$ smooth in a neighbor of $SO(3)$.
%A typical example is the  Ogden strain energy density, see  \cite{C, HA, OGDEN}, 
%\begin{equation}\lab{ogden}
%\mathcal W^I(\mathbf F):=\sum_{p=1}^{N}\frac{\mu_p}{\alpha_p}(\mathrm{Tr}((\mathbf F^T\mathbf F)^{\alpha_p/2})-3),\qquad \det\mathbf F=1,
%\end{equation}
%where  $N, \mu_p, \alpha_p$ are material constants.
% 
 \bigskip

%\smallskip

If  $h > 0$ is an adimensional small parameter, we rescale the displacement field and the external forces by letting $\mathfrak f= h\mathbf f$, $\mathfrak g=h\mathbf g$ and   $\mathbf y(x)-x=h \mathbf \v(x)$. We get
\[
\mathcal E^I_h(\v):=\displaystyle \int_\om \W^I(x, \mathbf I+h\nabla \v)\,dx-h^2\int_\om\mathbf f\cdot\mathbf v\,dx-h^2\int_{\partial\Omega}\mathbf g\cdot\v\,d\mathcal H^2(x).\]
%and it seems meaningful to ask what is the correct scaling of energies $\mathcal E_h$ as $h\to 0^+$. 
  We aim at obtaining the behavior of rescaled energies  as $h\to 0$ and at showing that the linearized elasticity functional arises in the limit. More precisely, we aim at  proving that   \begin{equation}\label{heuristic} \inf\mathcal E^I_h= h^2\min\mathcal E^I +o(h^2),\end{equation}
  and that if
   $\mathcal E^I_h(\v_h)-\inf \mathcal E^I_h=o(h^2)$ (i.e., if $\v_h$ is a sequence of almost minimizers for $\mathcal E^I_h$) then \begin{equation}\label{heuristic2}\v_h\to \v_0\in \argmin\mathcal E^I\end{equation} in a suitable sense, where
  % where $\mathcal E_0$ is the functional of linearized elasticity acting on the infinitesimal displacement field $\v$, complemented with the linearized incompressibility constraint $\dv\v=0$.
% Indeed, in our main result we will show that \eqref{heuristic}-\eqref{heuristic2} hold true (under suitable assumptions on the external load) by taking
 \begin{equation*}
\mathcal E^I(\v):=%\left\{\begin{array}{ll}
 \displaystyle\int_\Omega\mathcal Q^I(x,\mathbb E(\v))\,dx -\int_\om\mathbf f\cdot\mathbf v\,dx-\int_{\partial\Omega}\mathbf g\cdot\mathbf v\,d\mathcal H^2(x).
%\quad &\hbox{if }\ \dv\v=0\ \hbox{a.e. in}\ \om\\
%&\\
%+\infty\quad &\hbox{otherwise}.
%\end{array}\right.
 \end{equation*}
 Here, $\E(\vv):= \tfrac12(\nabla\v+\nabla \v^T)$ is the infinitesimal strain tensor field and
 $\mathcal Q^I(x,\cdot)$ is  defined for every  $\mathbf F\in\R^{3\times3}$  by
 $$\mathcal Q^I(x,\mathbf F):=\lim_{h\to 0}h^{-2}\mathcal W^I(x, \exp(h\mathbf F))=\left\{\begin{array}{ll}\frac12\,\mathbf F^T\,D^2 {\mathcal W}(x,\mathbf I)\,\mathbf F\quad&\mbox{if $\mathrm{Tr}\,\mathbf F=0$},\\
 +\infty\quad&\mbox{if $\mathrm{Tr}\,\mathbf F\neq 0,$}
 \end{array}\right.
 $$
 where ${\mathcal W}(x,\mathbf F):=\mathcal W^I(x, (\det \mathbf F)^{-1/3}\,\mathbf F)$  is the isochoric part of $\mathcal W^I$. \KKK
 Such a quadratic form is 
%$\mathcal V_0(x,\mathbf F):= \tfrac12 \mathbf F^T\, D^2\mathcal W(x,\mathbf I)\,\mathbf F. $
%and 
%$$\mathcal V_0(x,\mathbf F)=\frac{1}{2}\displaystyle \mathbf F^T D^2\W(x,\mathbf I) \mathbf F$$
 obtained by a formal Taylor expansion around the identity matrix  ($D^2$  denoting  the Hessian in the second variable). %which is justified under suitable regularity assumptions (see Section \ref{sectmain}).  
 Since $\mathcal Q^I(x,\mathbf F)=+\infty$ if $\mathrm{Tr}\,\mathbf F\neq 0$, we see that $\mathcal E^I(\v)$ is finite only if $\dv\v=0$ a.e. in $\Omega$. Therefore, $\mathcal E^I$ is the linearized elastic energy with elasticity tensor $D^2{\mathcal W}(x,\mathbf I)$ and $\dv\v=0$ is
 %can be obtained by a formal Taylor expansion of $\mathcal W$ around the identity matrix $\mathbf I$,  
  the linearized 
 incompressibility constraint.
 
 	\bigskip

% The quadratic form appearing in the expression of $\mathcal E_0$ acts on the infinitesimal strain tensor  $\E(\vv):= \tfrac12(\nabla\v+\nabla \v^T)$ due to frame indifference. Of course, the values of $\mathcal W$ outside the manifold $\det \mathbf F=1$ in $\mathbb R^{3\times 3}$ are irrelevant for the rescaled energies which involve $\mathcal W^I$. This is reflected by the divergence-free constraint in the limit.  
% %$\mathbb E(\v)\, D^2\mathcal W(x,\mathbf I)\,\mathbb E(\v)$, due to the divergence-free condition. 
% Indeed, $\dv\v=0$ implies $\mathcal V_0(x,\mathbb E(\v))=0$ if $\mathcal W$ depends only on $\det\mathbf F$.  The functional appearing in the limit is   the linearized elasticity energy with elasticity tensor $D^2\mathcal W^I(x,\cdot)$, where $D^2$ is the Hessian of $\mathcal W(x,\cdot)$. 
% %can be obtained by a formal Taylor expansion of $\mathcal W$ around the identity matrix $\mathbf I$,  
% On the other hand,  the linearized 
% incompressibility constraint is $\dv\v=0$.
 
 \medskip
 
 Under Dirichlet boundary conditions, \eqref{heuristic}-\eqref{heuristic2} have been obtained in \cite{MP}, by means of a  $\Gamma$-convergence analysis with respect to the weak topology of $W^{1,p}(\Omega,\mathbb R^3)$, where the exponent $p$ is suitably related to the coercivity properties of $\mathcal W^I$ (see Section \ref{sectmain}).
 On the other hand, in this paper we shall consider natural Neumann boundary conditions, i.e., the pure traction problem. In this case, it is crucial to impose suitable restrictions on the external forces. In particular, as done in \cite{MPTJOTA, MPTARMA}, here we shall assume  they have null resultant and null momentum with respect to the origin, namely
\begin{equation*}
\label{globalequi}
\mathbb E( \v)=0\quad\Rightarrow\quad\int_{\partial\Omega} \mathbf g\cdot\mathbf v\,d\H^{2}(x)+\int_{\Omega} \mathbf f\cdot\mathbf v\,dx\ =\ 0,% \qquad \mbox{if }
%\in \mathcal  \,,
\end{equation*}
and that 
they satisfy  the following strict compatibility condition \KKK
%the work spent on a suitable class of displacements satisfies a strict sign condition, i.e.,
\[
\lab{comp}
\int_{\partial\Omega}\mathbf g\cdot\mathbf W^{2}\,x\ d\H^{2}(x)\,+\,\int_{\Omega}\mathbf f\cdot\mathbf W^{2}\,x\ dx \ <\ 0\qquad\ \forall\ \mathbf W\in \mathbf R^{3\times 3}_{\mathrm{skew}},\,\ \mathbf W\not= \mathbf 0,
\]
where $\R^{3\times 3}_{\rm skew}$ denotes the set of real $3\times3$ skew-symmetric matrices.
 For suitable classes  of  external forces, the latter condition can be interpreted as an overall dilation effect on the body, see Remark \ref{unbound} later on. \KKK
Even under such restrictions,
 in this case it is not possible to obtain { a sequential  $\Gamma$-convergence result   with respect to the weak convergence in $W^{1,p}(\Omega,\mathbb R^3)$  or to the weak $L^p(\Omega,\mathbb R^{3\times3})$ convergence of infinitesimal strain tensors}  (we stress that the elastic part in $\mathcal E^I_h$ is not invariant by infinitesimal rigid displacements\KKK, see also Remark \ref{lastremark}).
However, in this context we will show that $\mathcal E^I$ provides indeed an upper bound for the sequence $h^{-2}\mathcal E^I_h(\v_h)$ in the limit $h\to0$, as soon as $\mathbb E(\v_h)\wconv \mathbb E(\v)$ weakly in $L^p(\Omega,\mathbb R^{3\times3})$ and $\det(\mathbf I+h\nabla\v_h)=1$. On the other hand, due to the lack of control on skewsymmetric parts, we may only obtain a lower bound in terms of the functional
 \[ \mathcal F^I(\v):=%\left\{\begin{array}{ll}\displaystyle 
 \min_{\mathbf W\in\mathbb R^{3\times 3}_{\mathrm{skew}}}\int_\Omega\mathcal Q^I(x,\mathbb E(\v)-\tfrac12\,\mathbf W^2)\,dx -\int_\om\mathbf f\cdot\mathbf v\,dx-\int_{\partial\Omega}\mathbf g\cdot\mathbf v\,d\mathcal H^2(x).%\quad &\hbox{if }\ \dv\v=\frac12\mathrm{Tr}\,\mathbf W^2\ \hbox{a.e. in}\ \om\\
%&\\
%+\infty\quad &\hbox{otherwise}.
%\end{array}\right.
 \]
Clearly, we have $\mathcal F^I\le \mathcal E^I$.
 Interestingly, this still allows to obtain \eqref{heuristic}-\eqref{heuristic2}, since it is possible to show (see Lemma \ref{linel} later on) that if  $\v\in\argmin\mathcal F^I$, then $\mathcal F^I(\v)=\mathcal E
 ^I(\v)$. Indeed, if $\v\in\argmin \mathcal F
 ^I$, then the minimization problem inside the definition of $\mathcal F^I$ is solved by $\mathbf W=0$.
%The form of function $\mathcal F_0$ is also reminiscent of the one in \cite{DDT}.
 
 \bigskip
 
 A key step for obtaining to proof of \eqref{heuristic}-\eqref{heuristic2} will be  to approximate  divergence-free vector fields in terms of vector fields $\v_t$ having the property $\mathcal W^I(x,\mathbf I+t\nabla\v_t)<+\infty$, i.e., $\det(\mathbf I+t\nabla\v_t)=1$ for any $t>0$.  Following the approach of  \cite{MP}, we  define $$\v_t(x):=\frac{\mathbf y_t(x)-x}{t}$$
where $\mathbf y_t$ is the flow associated to the divergence-free field $\mathbf \v$, starting from $\mathbf y_0(x)=x\in\Omega$, and then by Reynolds transport formula we  see that $\mathbf y_t$ is a volume preserving deformation field. Indeed, for any $A\subseteq\Omega$ we have
\[
\frac{d}{dt}|\mathbf y_t(A)|=\int_A \dv\v(x)\,dx=0,
\]
 so that $\det\nabla\mathbf y_t=\det(\mathbf I+t\nabla\v_t)=1$.
In this paper, we will further develop this approach in order to obtain the desired recovery sequence and the upper bound under the above conditions on external loads.
Moreover, since we have natural boundary conditions we avoid the technical difficulties due to the need of keeping track of the boundary data through the construction of the recovery sequence. Therefore, we shall not need the strong regularity assumptions on $\partial\Omega$  that were imposed in \cite{MP}.
 
 \medskip
 
% Moreover, we will prove that if $$\mathcal E_h(\v_h)-\inf \mathcal E_h=o(h^2),$$ then $$\v_h\wconv \v_0\in \argmin\mathcal E_0$$ in the weak topology of a suitable Sobolev space.
 
 % Anyhow we highlight that linearized models may provide a good approximation that fits experimental data, see for instance \cite{LZWK}.
% \cite{BA, MV, SHP}.  Similar properties are observed in soft biological tissues: in many biomechanical studies blood vessels are modeled as nonlinear elastic materials that are incompressible under physiological loads  \cite{CF,HO,KS,HSMGR}. 
 \medskip

%  in many biomechanical studies, soft tissues of blood vessels can be modeled as pseudoelastic materials that are incompressible under physiological loads, see  \cite{CF,HO,KS,HSMGR}. 
% In particular, it is shown in \cite{LZWK} that a linearized model fits experimental data of porcine coronary arteries in inflation-stretch tests.
% \KKK

   Hyperlastic incompressible models are typically used to describe rubber-like materials such as artificial elastomers   as well as biological soft tissues  \cite{CF, HA, HO,KS, og2, og1, SO, Y}. 
   We refer to  \cite{AB, BA, Ch, MV, SHP, Y2} for many examples of strain energy densities that are used for the nonlinear descprition of the stress-strain behavior of these materials.
On the other hand, linear modeling is usually considered a good approximation in the small strain regime.
 Indeed,   the classical theory of linearized elasticity is based on the smallness assumption on deformation gradients, see for instance \cite{G, Lo, S}.  Nevertheless,  for a variational derivation, i.e., for the proof of
   \eqref{heuristic}-\eqref{heuristic2}, no a-priori smallness assumption is needed, %as the right scali %are small in any sense, that is, we are not in the setting of a kinematic linearization \cite{Lo} (\MMM citazioni ?\KKK)
  leading to a rigorous justification of linearized elasticity (we also stress  that small loads need not give rise to  small strains in rubber-like materials, due to their high compliance in shear). 
The first rigorous variational derivation of linearized elasticity from finite elasticity is given in \cite{DMPN},
 where $\Gamma$-convergence and convergence of minimizers of the associated Dirichlet boundary value problems are proven in the compressible case. We refer to \cite{ABK,ADMDS,ADMLP,CD,JS,MPTJOTA,MPTARMA,MP, Sc} for many other results of this kind, some of which including theories for incompressible materials \cite{CD, JS, MP}.  The study of asymptotic properties of minimal energies, similar to \eqref{heuristic}-\eqref{heuristic2}, is also typical of dimension reduction problems, see for instance  \cite{ABP,LM, PT1, PTplate, PT2, PT4, PPG}. %for other examples of asymptotic analysis of minimal energies in dimension reduction problems. 
 \KKK

 %On the other hand, the corresponding result for incompressible elasticity has been recently independently obtained by the authors in \cite{MP} and by Jesenko and Schmidt in \cite{JS}. 
%In this paper we show how these results can be extended to the incompressible case, i.e., assuming the constraint $\det\mathbf \nabla \mathbf y=1$ on admissible deformations fields. 
%It is well-known that the incompressibility constraint poses some challenges to the  $\Gamma$-convergence analysis 
%A derivation of a two-dimensional model for elastic plates in the incompressibility regime is found in \cite{CD}. 

\medskip

In the next section we rigorously state the main theorem, providing the proof of \eqref{heuristic}-\eqref{heuristic2} for the pure traction problem.
%in the case of equilibrated and compatible external forces.
A related result has been recently obtained by Jesenko and Schmidt in \cite{JS} under different assumptions on the external loads, but in the more general framework of multiwell potentials that leads to a suitable quasiconvex envelope of the strain energy density in the limiting functional.  

%However, since the underlying convergence is the weak $L^2$ convergence of infinitesimal strain tensors, this approach needs to be further developed 

%  Indeed, some novel approach (that we develop in Lemma \ref{reynolds}) is required for the construction of recovery sequences, due to the necessity of recovering the linearized incompressibility constraint $\dv\v=0$ with a sequence $\v_h$ satisfying $\det(\mathbf I+h\nabla\v_h)=1$ a.e. in $\om$. Moreover,   a different strategy is also needed to ensure that the whole sequence $(\v_h)$ and $\v$ satisfy the same Dirichlet condition. To this end the crucial point consists in analyzing  vector potentials:  we show in Lemma \ref{w=0} that if $\v\in H^1(\om,\mathbb R^3)$, $\mathrm{div}\,\v=0$ in $\om$ and $\v=0$ on $\Gamma\subset\partial\om$, then, under suitable topological assumption on $\om$ (see conditions \eqref{OMEGA}), there exists $\w\in H^2(\om,\mathbb R^3)$ such that $\curl \w=\v$ in $\om$ and $\w=0$ on $\Gamma$. %after {\it a long an winding road} started with  Lemma 3.1. 
%  Taking advantage of this result, the construction of the recovery sequence
%relies on a careful  approximation of $\w$ thus outflanking the constraint $\dv \v=0$.
%
\bigskip
 
 \subsection*{Plan of the paper} In Section 2 we  collect the the assumptions of the theory and we state the main result about convergence of minimizers. The latter is based on suitable compactness properties of (almost) minimizing sequences that are established in section 4, after some preliminaries in Section 3. In section 5, we provide the lower bound. Section 6 delivers the upper bound. Section 7 completes the proof of the main result. %for the construction of the recovery sequence in the proof of the main theorem, which is instead contained in Section 5. 

%\newpage
%%%%%%%%%%%%%%%%%%%%%%%%%%%%%%%%%%%%%%%%%%%%%
%~~~~~~~~~~~~~~~~~~~~
\section{Main result}\label{sectmain}
%{Statement of the problem and main results}
%%%%%%%%%%%%%%%

In this section we introduce all the assumptions and we state the main result. 
%\subsection*{Assumptions on the reference configuration}
Let $\om\subset \R^3$ be the reference configuration of the body. We assume that for some $m\in\mathbb N$
\begin{equation}\label{OMEGA} \begin{array}{ll}&\om \hbox{ is a bounded open connected Lipschitz set, } \partial\Omega \hbox{ has $m$ connected components}.
\end{array}
\end{equation}
%Through the proofs we will also suppose, without loss of generality, that $\om$ is connected. Indeed, all the arguments that we shall develop can be extended to the non connected case  by considering each connected component.  (\MMM forse qui la connessione di $\Omega$ serve da qualche parte per dire che i campi a gradiente nullo sono costanti \KKK)

\subsection*{Assumptions on the elastic energy density} 

%It is convenient to introduce an auxiliary strain energy density, defined on every $3\times3$ matrix. \KKK 
We let
  $\W^I : \om \times \mathbb R^{3 \times 3} \to [0, +\infty ]$ be  ${\mathcal L}^3\! \times\! {\mathcal B}^{9} $- measurable. %For a.e. $x\in\om$, let  $\mathcal W(x,\mathbf F)=+\infty$ if $\det\mathbf F\le 0$. 
The assumptions on $\W^I$ are similar to the ones in \cite{ADMDS,MP}, i.e., for a.e. $x\in\Omega$
%\MMM forse dire anche che  ?  \KKK
%
%We assume that for a.e. $x \in \om$
\[
\tag{$\bb{\mathcal W0}$}\label{infty} \mathcal W^I(x,\mathbf F)=+\infty\qquad\mbox{if $\det\mathbf F\neq 1$},
\]
\beeq \lab{framind}\tag{$\bb{\mathcal W1}$} \W^I(x, \mathbf R\mathbf F)=\W^I(x, \mathbf F) \qquad \forall \, \mathbf\! \mathbf R\!\in\! SO(3), \quad \forall\, \mathbf F\in \mathbb R^{3 \times 3},%\ \quad  \hbox{ for a.e.}\ x\in \om\,,
\eneq
\beeq \lab{Z1}\tag{$\bb{\mathcal W2}$}
\min \mathcal W^I=\KKK	\W^I(x,\mathbf I)=0.%\ \qquad \hbox{for a.e. }x \in \om  \,,
\eneq
Moreover, we define $\mathcal W:\Omega\times\mathbb R^{3\times3}\to[0,+\infty]$ by $$\mathcal W(x,\mathbf F):=\W^I(x,(\det\mathbf F)^{-1/3}\mathbf F)$$ and we assume  that $\W(x,\mathbf F)$ is $C^2$ in a neighbor of rotations, i.e., 
%(with gradient and Hessian denoted by $D$ and $D^2$), i.e.,
\beeq\begin{array}{ll}\lab{reg}\tag{$\bb{\mathcal W3}$} &   \hbox{there exists a neighborhood} \ \mathcal U \  \hbox{of}\ SO(3) \hbox{ s.t., for a.e. $x\in\om$,  }
\mathcal W(x,\cdot)\in C^{2}(\mathcal U),\\
& \hbox{with a modulus of continuity of $D^2\mathcal W(x,\cdot)$ that does not depend on $x$}.\\
&\hbox{Moreover,  there exists } K>0\ \hbox{such that} \ |D^2 \mathcal W(x,\mathbf I)|\le K\,\,\,\hbox{for a.e. $ x\in\om$}.
\end{array}
\eneq
We assume the following growth property from below: there exist $C>0$ and $p\in(1,2]$ such that for a.e. $x\in\Omega$
\beeq \lab{coerc}\tag{$\bb{\mathcal W4}$}
\begin{array}{ll}
\W^I(x,\mathbf F)\ge  C \ g_{p}(d(\mathbf F, SO(3)))\qquad
 \forall\, \mathbf F\in \mathbb R^{3 \times 3},%\quad \hbox{for a.e.}\ x\in \om,
\end{array}
\eneq
where $g_p:[0,+\infty)\to\mathbb R$ is the convex function defined by
\beeq\lab{gp}
g_{p}(t)=\left\{\begin{array}{ll} \!\! t^{2}\quad &\hbox{if}\ 0\le t\le 1\vspace{0.3cm}\\
%&\\
\!\! \displaystyle \frac{2t^{p}}{p}-\frac{2}{p}+1\quad &\hbox{if}\ t\ge 1.\\
\end{array}\right.
\eneq
Here,  $SO(3)$ denotes the group of positive rotation matrices.  $d(\cdot,SO(3))$ denotes the distance from $SO(3)$ and it satisfies
\begin{equation}\label{dSO3}
d
(\mathbf F,SO(3)):=\inf_{\mathbf R\in SO(3)}|\mathbf F-\mathbf R|= |\sqrt{\mathbf F^T\mathbf F}-\mathbf I |\qquad \mbox{ for any  $\mathbf F\in\mathbb R^{3\times3}$ with $\det\mathbf F>0$},
\end{equation}
%{\color{blue} se non ricordo male \`e un'uguaglianza ( Gurtin)}
 where $|\cdot|$ is the Euclidean norm on $\mathbb R^{3\times3}$, i.e., $|\mathbf F|:=\sqrt{\mathrm{Tr}(\mathbf F^T\mathbf F)}$.

\KKK

%Here and through the paper, for a matrix $\mathbf B\in\mathbb R^{3\times3}$, we denote $|\mathbf B|:=\sqrt{\mathrm{Tr}(\mathbf B^T\mathbf B})$ 

%~~~~~~~~~~~~~~~~~~~~
%

%\subsection*{Incompressible strain energy density}
%We assume that the  material is  incompressible. This is done by introducing the incompressible elastic energy density $\W^I$ as
%\[%\lab{incenergy}
%\mathcal W^{I}(x, \mathbf F):=\left\{\begin{array}{ll} \mathcal W(x,\mathbf F)\quad &\hbox{if} \det \mathbf F=1\\
%&\\
% +\infty\quad &\hbox{otherwise}\end{array}\right.
%\]

%We notice that for model energy density $\mathcal W$ of the form \eqref{isovol},
%Moreover, the choice of $\mathcal W_{vol}$ does not affect the resulting $\mathcal W^I$:  it is therefore not restrictive to assume that $\mathcal W_{vol}$ takes a specific form when assuming \eqref{coerc}.

\subsection*{Assumptions on the external forces}
We introduce  a  body force field $\mathbf f\in L^{\frac{3p}{4p-3}}(\Omega,\R^3)$ and a surface force field $\mathbf g\in L^{\frac{2p}{3p-3}}(\partial\Omega,\R^3)$, where $p$ is such that \eqref{coerc} holds. The corresponding contribution to the energy is given by the linear functional \begin{equation*}\label{external}
\mathcal L(\v):=\int_\om\mathbf f\cdot\v\,dx+\int_{\partial\om}\mathbf g\cdot\v\,d\mathcal H^2(x),\qquad \v\in W^{1,p}(\om,\mathbb R^3).
\end{equation*}
%where $\mathbf f\in L^{\frac{3p}{4p-3}}(\om,\mathbb R^3)$ is given.
 %By the Sobolev embedding $W^{1,p}(\om,\mathbb R^3)\hookrightarrow L^{\frac{3p}{3-p}}(\om,\mathbb R^3)$ and the properties of the trace operator, $\mathcal L$ is a continuous functional on $W^{1,p}(\om,\mathbb R^3)$ and $|\mathcal L(\v)|\le C_{\mathcal L}\|\v\|_{W^{1,p}(\om,\mathbb R^3)}$ holds for a suitable constant $C_{\mathcal L}$ that depends on $\om,\ \mathbf f$ and $\mathbf g$.\\
We note that since $\Omega$ is a bounded Lipschitz domain, the Sobolev embedding $W^{1,p}(\Omega,\mathbb R^3)\hookrightarrow L^{\frac{3p}{3-p}}(\Omega,\mathbb R^3)$ and the Sobolev trace embedding $W^{1,p}(\Omega,\mathbb R^3)\hookrightarrow L^{\frac{2p}{3-p}}(\partial\Omega,\mathbb R^3)$ imply that $\mathcal L$ is a bounded functional over $W^{1,p}(\Omega,\mathbb R^3)$.

 We assume that external {  loads have null resultant and null momentum, i.e.,
 \begin{equation}\lab{globalequi}\tag{$\bb{\mathcal L1}$}
%\v\in W^{1,p}(\Omega,\mathbb R^3)\mbox{ and } 
\mathbb E(\v)=0\;\;\Rightarrow\;\;\mathcal L(\v)=0,
%\ \forall\, \v\in W^{1,p} \ \hbox{s.t.}\ \mathbb E(\v)=0,
\end{equation}
 and that they satisfy the following strict compatibility condition}
\begin{equation}\lab{comp}\tag{$\bb{\mathcal L2}$}
\mathcal L(\mathbf W^2x)<0\qquad \forall\,\mathbf W\in \mathbb R^{3\times 3}_{\rm skew}, \ \mathbf W\neq \mathbf 0.
\end{equation}
%where $\R^{3\times 3}_{\rm skew}$ denotes the set of real $3\times3$ skew-symmetric matrices.
Some examples of external loads satisfying the above assumptions are provided in the remarks at the end of this section.
%As an example we may take $\mathbf f=\nabla\phi$, where $\phi\in W^{1,r}_0(\Omega)$, $r= \tfrac{3p}{4p-3}$, and $\mathbf g=\lambda \mathbf n$, where $\mathbf n$ is the unit exterior normal vector to $\partial\Omega$, under the restrictions $\int_\om\phi(x)\,dx\le 0$ and $\lambda\ge 0$ (with at least one strict inequality). 
%It is readily seen that in this case \eqref{globalequi} and \eqref{comp} are satisfied.
%We refer to Remark \ref{phi} for more properties of external loads of this form.
\KKK

\subsection*{Statement of the main result}
 The functional representing the scaled total energy is denoted by $\mathcal F_h^{I}: W^{1,p}(\Omega,\R^3)\to \R\cup\{+\infty\} $ and defined as follows
\begin{equation*}
\label{nonlinear}
\displaystyle \mathcal F_h^{I}(\v):=\frac1{h^2}\int_\om\mathcal W^I(x,\mathbf I+h\nabla\v)\,dx
%\ :=\
%\int_\Omega \mathcal V_{h}^{I}(x, \nabla\v)\, d\xx\,
-\mathcal L(\v).
\end{equation*}
%We also define $\mathcal F_h: W^{1,p}(\Omega,\R^3)\to \R\cup\{+\infty\} $ as
%\[
%\mathcal F_{h}(\v):=\frac1{h^2}\int_\Omega\mathcal W(x,\mathbf I+h\nabla\v)\,dx-\mathcal L(\v).
%\]  
%where
%\beeq
% \mathcal V_{h}^{I}(x, \mathbf B):=h^{-2}\mathcal W^{I}(\x, \mathbf I+h\mathbf B).
 %\eneq
%\beeq
%\lab{potential} \mathcal L(\v):\ =\,\int_{\Omega} \mathbf f\cdot\mathbf v\ dx.
%\eneq
%A sequence  $\v_{h}\in  H^1(\Omega;\R^3)$ is said a {\it minimizing sequence} for the sequence %of energy functionals $\F_{h}$ if there exists a scalar sequence $\eps_{h}\to 0^{+}$ as $h\to %0^{+}$ such that
%\beeq
%\mathcal F_{h}(\v_{h})\,\le \,\inf \mathcal F_{h}+\eps_{h}\,, \qquad \forall\, h>0 \,.
%\eneq
%
%\subsection*{Linearized functional}
%In this paper we are interested in the  asymptotic behavior as $h\downarrow 0_+$
%of such minimizing sequences
%of  sequences of quasi-minimizers of functionals $\mathcal F_h^I$.
We further
introduce  the functional of linearized incompressible elasticity $\mathcal E^I:W^{1,p}(\Omega,\mathbb R^3)\to\mathbb R\cup\{+\infty\}$ as
\begin{equation*}\lab{elfunc}
\displaystyle {\mathcal E}^I(\v):=\left\{\begin{array}{ll}\displaystyle\int_\om \mathcal Q^I(x, \mathbb E(\v))\,dx-\mathcal L(\v)\quad &\hbox{if} \ \v\in H^1(\om,\mathbb R^3)\\
&\\
 \ \!\!+\infty\quad &\hbox{otherwise in} \ W^{1,p}(\om,\mathbb R^3),
\end{array}\right.
\end{equation*}
where 
\begin{equation*}\lab{V0}
{\mathcal Q}^I (x, \mathbf B):=\left\{\begin{array}{ll} &\dfrac12\,\mathbf B^T\,D^2\mathcal W(x,\mathbf I)\,\mathbf B\ \qquad \hbox{if}\ \ \tr\,\mathbf B=0\\
&\\
& +\infty\ \hbox{otherwise,}
\end{array}\right.
\end{equation*}
and  the limit energy functional $\mathcal F^{I}: W^{1,p}(\Omega,\R^3)\to \R\cup \{+\infty\}$ defined by
\begin{equation}
\label{DTfunc}
\F^{I}(\v): =\left\{\begin{array}{ll}
%&\displaystyle\int_\Omega 
%\mathcal V_{0}^{I}(\xx,\mathbb E(\v))
% dx
\displaystyle
\inf_{\mathbf W\in \mathbb R^{3\times 3}_{\mathrm{skew}}}\int_\om \mathcal Q^I\left (x,\mathbb E(\v)-\frac{\mathbf W^2}{2}\right )\,dx
 -\mathcal L(\mathbf v)\; &\hbox{if} \ \v\in H^1(\om,\mathbb R^3)\\
 &\\
 \ \!\!+\infty\; &\hbox{otherwise in} \ W^{1,p}(\om,\mathbb R^3).
 \end{array}
 \right.
\end{equation}
%where we have set, for every $\mathbf W\in \mathbb R^{3\times 3}_{skew}$,
%\begin{equation*}
%\mathcal G(\v,\mathbf W):=\int_\om \mathcal V_0\left (\mathbb E(\v)-\frac{\mathbf W^2}{2}\right )\,dx
%\end{equation*}
%For every displacements field $\vv\in H^1(\Omega;\R^3)$,
We notice that $\mathcal E^I(\v)$ is finite only if $\v\in H^1(\om,\mathbb R^3)$ is divergence-free, while $\mathcal F^I(\v)$ is finite  only if $\v$ has constant nonpositive divergence (since $\mathbf W^2$ is negative semi-definite  for any $\mathbf W\in\R^{3\times3}_{\rm skew}$). 
\KKK
%as $h\to 0$. Therefore, letting $ {\mathcal W}_*(\mathbf F):=\mathcal W_{vol}(\det\mathbf F)$ and  making use of \eqref{Z1} and \eqref{reg}, we get ${\mathcal W}_*(\mathbf I)=D {\mathcal W_*}(\mathbf I)=0$ and  
% $$\mathbf B ^T D^2{\mathcal W_*}(\mathbf I)\,\mathbf B=\lim_{h\to0} h^{-2}\mathcal W_*(\mathbf I+h\mathbf B)=0$$ for any $\mathbf B\in\mathbb R^{3\times3}$ with null trace.

%\beeq\lab{V0I}
%\V_{0}^{I}(x,{\rm sym}\mathbf B):=\left\{\begin{array}{ll} & \V_{0}(x,{\rm sym}\mathbf B)\quad \hbox {if}\ \Tr\mathbf B=0\\
%&\\
%& +\infty\quad \hbox{otherwise}.
%\end{array}\right.
%\eneq
%%%%%%%%%%%%
%%%%%%%%%%%%

%\subsection*{Statement of the main result}

We are ready for the statement of the main result
\begin{theorem}
\label{mainth1}
Assume \eqref{OMEGA},\eqref{globalequi},\eqref{comp}, \eqref{infty}, \eqref{framind}, \eqref{Z1}, \eqref{reg},    
\eqref{coerc}. 
Then for every vanishing sequence $(h_j)_{j\in\mathbb N}\subset(0,1)$  
we have
\beeq\lab{convmin}
\inf_{W^{1,p}(\om,\mathbb R^3)}\mathcal F^{I}_{h_{j}} \in \mathbb R. \eneq
If $(\mathbf v_j)_{j\in\mathbb N}\subset W^{1,p}(\om,\mathbb R^3)$ is a sequence  such that
\begin{equation} \label{assinf2}  \lim_{j\to +\infty}\left( \mathcal F^{I}_{h_{j}}(\v_{j})-\inf_{W^{1,p}(\om,\mathbb R^3)}\mathcal F^{I}_{h_{j}}\right)= 0,
\end{equation}
%for some $\v_{j}\in H^1(\Omega;\R^3)$ such that $\v_{j}=0$ on $\Gamma$,
then there is a (not relabeled) subsequence such that
\[ \mathbb E(\v_{j})\wconv \mathbb E(\v_*) \ \hbox{weakly in}\ L^{p}(\Omega,\R^3)\qquad\mbox{as $j\to+\infty$},
\]
where $\v_*\in H^1(\om,\R^3)$ is a minimizer of $\mathcal F^I$ over $W^{1,p}(\om,\mathbb R^3)$,
and
\begin{equation*}   \mathcal F^{I}_{h_{j}}(\v_{j})\to \mathcal F^{I}(\v_*)={\mathcal E}^I(\v_*),\quad
\inf_{W^{1,p}(\om,\R^3)}\mathcal F^{I}_{h_{j}}\to \min_{W^{1,p}(\om,\R^3)}\mathcal F^{I}= \min_{W^{1,p}(\om,\R^3)}\mathcal E^{I} \quad\mbox{as $j\to+\infty$}.
\end{equation*}
\end{theorem}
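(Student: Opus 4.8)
The plan is to follow the classical three-step $\Gamma$-convergence scheme adapted to the pure-traction setting: (i) establish a \emph{compactness} result for almost-minimizing sequences; (ii) prove a \emph{lower bound} (liminf inequality) in terms of $\mathcal F^I$; (iii) construct a \emph{recovery sequence} (limsup inequality) delivering the upper bound; and finally (iv) combine these with the elementary fact, asserted via Lemma \ref{linel}, that minimizers of $\mathcal F^I$ are also minimizers of $\mathcal E^I$. The subtlety, compared to the Dirichlet case of \cite{MP}, is that the elastic part of $\mathcal F^I_h$ is \emph{not} invariant under infinitesimal rigid displacements, so one cannot hope for weak $W^{1,p}$ compactness of the displacements themselves; the right object to track is the strain $\mathbb E(\v_j)$, and the skew parts of $\nabla\v_j$ must be handled separately, using the compatibility condition \eqref{comp} to prevent them from running off to infinity.

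\textbf{Compactness.} First I would fix a competitor (e.g.\ $\v\equiv 0$, for which $\mathcal F^I_{h_j}(0)=0$) to see that the infima are bounded above. Then, writing $\mathbf I+h_j\nabla\v_j = \mathbf R_j(\mathbf I + h_j \mathbb E_j + \dots)$ via a polar-type decomposition and invoking the geometric rigidity estimate of Friesecke--James--Müller together with the coercivity \eqref{coerc} and the $g_p$ growth, one obtains an $L^p$ bound on $d(\mathbf I+h_j\nabla\v_j,SO(3))/h_j$, hence on $\mathbb E(\v_j)$ (modulo the skew correction $-\tfrac12\mathbf W_j^2$ coming from the rotation $\mathbf R_j = \exp(h_j\mathbf W_j)$). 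The incompressibility $\det(\mathbf I+h_j\nabla\v_j)=1$ forces $\tr\mathbb E(\v_j)\to 0$ in the appropriate sense. The key point is to bound the skew matrices $\mathbf W_j$: here one expands $\mathcal F^I_{h_j}(\v_j)$ and uses that the elastic energy controls $\int_\Omega \mathcal Q^I(x,\mathbb E(\v_j)-\tfrac12\mathbf W_j^2)\,dx + o(1) \ge c|\mathbf W_j|^4 - (\text{lower order})$, while the load term $-\mathcal L(\v_j)$ must not destroy this; since $\mathbf W_j^2 x$ enters the displacement, the strict negativity in \eqref{comp} (quantified into a uniform bound $\mathcal L(\mathbf W^2 x)\le -c|\mathbf W|^2$ on the compact unit sphere of $\R^{3\times3}_{\rm skew}$) gives the needed coercivity in $\mathbf W_j$. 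Passing to a subsequence, $\mathbf W_j\to\mathbf W_*$ and $\mathbb E(\v_j)\wconv \mathbb E(\v_*)$ weakly in $L^p$, with $\v_*\in H^1$.

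\textbf{Lower and upper bounds.} For the lower bound, I would use weak $L^p$ lower semicontinuity of the convex functional $\mathbf B\mapsto\int_\Omega\mathcal Q^I(x,\mathbf B-\tfrac12\mathbf W_*^2)\,dx$ applied to $\mathbb E(\v_j)$, combined with a Taylor expansion of $\mathcal W$ near $SO(3)$ (frame indifference reduces $\mathcal W^I$ on the nonlinear energy to the isochoric part $\mathcal W$, and $\mathcal W^I(x,\mathbf I+h\nabla\v)=\mathcal W(x,\mathbf I+h\nabla\v)$ under the incompressibility constraint), controlling the remainder by the $C^2$ assumption \eqref{reg} and a truncation/equi-integrability argument to handle the region where $\nabla\v_j$ is large; this yields $\liminf_j \mathcal F^I_{h_j}(\v_j)\ge \int_\Omega\mathcal Q^I(x,\mathbb E(\v_*)-\tfrac12\mathbf W_*^2)\,dx-\mathcal L(\v_*)\ge \mathcal F^I(\v_*)$. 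For the upper bound, given a target $\v\in H^1$ with $\dv\v=0$ (the finite-energy case; constant negative divergence is reached by adding $\tfrac12\mathbf W^2 x$), I would build the recovery sequence from the flow $\mathbf y_t$ of the divergence-free field as in the excerpt, setting $\v_{h_j}:=(\mathbf y_{h_j}-\mathrm{id})/h_j$, which automatically satisfies $\det(\mathbf I+h_j\nabla\v_{h_j})=1$ so $\mathcal W^I$ is finite; then show $\nabla\v_{h_j}\to\nabla\v$ strongly (the flow estimate gives $\v_{h_j}\to\v$ in $W^{1,p}$, after a density step approximating a general $H^1$ divergence-free field by smooth ones), and pass to the limit in the energy using again the Taylor expansion and dominated convergence to get $\limsup_j\mathcal F^I_{h_j}(\v_{h_j})\le\mathcal E^I(\v)$.

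\textbf{Conclusion and the main obstacle.} Combining: for any $\v\in\argmin\mathcal F^I$ the recovery sequence gives $\limsup\inf\mathcal F^I_{h_j}\le\mathcal E^I(\v)=\mathcal F^I(\v)=\min\mathcal F^I$ (the middle equality is Lemma \ref{linel}), while compactness plus the lower bound give, for almost-minimizers $\v_j$, $\liminf\mathcal F^I_{h_j}(\v_j)\ge\mathcal F^I(\v_*)\ge\min\mathcal F^I$; chaining these forces all inequalities to be equalities, so $\v_*\in\argmin\mathcal F^I$, $\mathcal F^I_{h_j}(\v_j)\to\mathcal F^I(\v_*)=\mathcal E^I(\v_*)$, and $\inf\mathcal F^I_{h_j}\to\min\mathcal F^I=\min\mathcal E^I$; finiteness \eqref{convmin} is immediate from the two-sided bound. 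I expect the \textbf{main obstacle} to be the compactness step — specifically, simultaneously extracting the rotations $\mathbf R_j=\exp(h_j\mathbf W_j)$ via geometric rigidity, keeping the skew parts $\mathbf W_j$ bounded using the strict compatibility \eqref{comp} against the growth of the load term, and showing $\tr\mathbb E(\v_*)=0$ in the limit — since this is exactly where the pure-traction problem departs from the Dirichlet case and where the delicate interplay between incompressibility, coercivity, and the admissibility of the forces is used. A secondary technical nuisance is the equi-integrability/truncation argument needed to justify the Taylor expansion of $\mathcal W$ on the set where $h_j\nabla\v_j$ leaves the neighborhood $\mathcal U$ of $SO(3)$.
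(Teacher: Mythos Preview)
Your overall architecture --- compactness, liminf inequality for $\mathcal F^I$, recovery sequence giving the $\mathcal E^I$ upper bound, then Lemma \ref{linel} to close --- matches the paper exactly, and your conclusion paragraph is correct. However, your compactness sketch contains a genuine gap. You claim that the elastic energy yields $\int_\Omega \mathcal Q^I(x,\mathbb E(\v_j)-\tfrac12\mathbf W_j^2)\,dx + o(1) \ge c|\mathbf W_j|^4$; this fails, because a sequence of the form $\v_j(x)=h_j^{-1}(\mathbf R_j-\mathbf I)x$ (exact rotations) has $\mathbb E(\v_j)=h_j^{-1}\,\mathrm{sym}(\mathbf R_j-\mathbf I)$ matching the skew correction exactly, so the left side vanishes while your $\mathbf W_j$ can be arbitrarily large. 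Moreover the Taylor expansion replacing $\mathcal W^I$ by $\mathcal Q^I$ is not available at this stage, since $h_j\nabla\v_j$ is not yet known to be small. The paper instead argues by contradiction: assuming $t_j:=\|\mathbb E(\v_j)\|_{L^p}\to\infty$, it normalizes $\w_j:=\v_j/t_j$, applies rigidity to get $\int_\Omega g_p(|h_jt_j\nabla\w_j-(\mathbf R_j-\mathbf I)|)\,dx\le h_j^2(1+C t_j)$, and then --- splitting into the three regimes $h_jt_j\to\lambda\in(0,\infty)$, $h_jt_j\to 0$, $h_jt_j\to+\infty$ --- proves that $\mathbb E(\w_j)\to\mathbb E(\w)$ \emph{strongly} in $L^p$ with $\nabla\w$ a constant lying in $\mathbb R^{3\times3}_{\rm skew}+\mathbb K$, the cone over $SO(3)-\mathbf I$. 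Only then does \eqref{comp}, combined with $\mathcal L(\w)\ge 0$ (obtained from the almost-minimality), force the $\mathbb K$-component to vanish, i.e.\ $\mathbb E(\w)=0$, contradicting $\|\mathbb E(\w)\|_{L^p}=1$. This normalization-and-classification argument is the missing idea.

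There is a second, smaller gap in your lower bound. The correction $-\tfrac12\mathbf W^2$ does not come from the rotation extracted in the compactness step; it arises from a separate lemma showing $\sqrt{h_j}\,\nabla\v_j\to\mathbf W$ in $L^p$ (note the scale $\sqrt{h_j}$, not $h_j$), so that the quadratic term in the Green--St.\ Venant strain $\mathbf D_j:=\mathbb E(\v_j)+\tfrac12 h_j\nabla\v_j^T\nabla\v_j$ converges to $-\tfrac12\mathbf W^2$. The same scaling, inserted into the expansion of $\det(\mathbf I+h_j\nabla\v_j)=1$, gives $2\,\dv\v_j\to\tr\mathbf W^2$, so that in general $\dv\v_*=\tfrac12\tr\mathbf W^2\le 0$, not $0$ as you wrote. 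Finally, since $\mathcal Q^I$ has quadratic growth you need weak $L^2$ (not $L^p$) lower semicontinuity; the paper secures this via an additional $L^2$ bound on the truncations $\mathbf 1_{B_j}\mathbf D_j$, with $B_j:=\{x:|\sqrt{h_j}\nabla\v_j(x)|\le 2|\mathbf W|+1\}$, which is the precise content of the ``truncation/equi-integrability'' step you allude to but do not carry out.
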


We close this section with several remarks about the main theorem.

\begin{remark}\rm
It is worth noticing that the infimum in the right hand side of \eqref{DTfunc} is actually a minimum. Indeed if $\v\in H^1(\om,\mathbb R^3)$ then either $\F^{I}(\v)=+\infty$ or $\dv\v$ is a non positive constant. In the latter case let $(\mathbf W_n)_{n\in\mathbb N}\subset \mathbb R^{3\times 3}_{\rm skew}$ be a minimizing sequence: then $|\mathbf W_n|^2=\tr (\mathbf W_n^T\mathbf W_n)=-\tr \mathbf W_n^2=-2\dv\v$ hence there exists $\mathbf W_{\v}$ such that, up to subsequences, $\mathbf W_n\to \mathbf W_{\v}$ and 
$\F^{I}(\v)=\int_\om \mathcal Q^I\left (\mathbb E(\v)-\frac{\mathbf W_\v^2}{2}\right )\,dx-\mathcal L(\v)$ as claimed.
\end{remark}
\begin{remark}\rm If the function $\mathcal W$ is replaced by any other $\widetilde {\mathcal W}$, still satisfying 
  \eqref{reg},    
such that $\mathcal W(x,\mathbf F)=\widetilde {\mathcal W}(x,\mathbf F)$ if $\det \mathbf F=1$, then  this does not affect functionals $\mathcal F^I$ and $\mathcal E^I$. Indeed, if $\mathrm{Tr}\,\mathbf B=0$ then $\det(\exp(h\mathbf B))=\exp(h\tr \mathbf B)=1$ and by Taylor's expansion we have for a.e. $x\in\Omega$
\[
\lim_{h\to 0}\frac1{h^2}\mathcal W^I(x,\exp(h\mathbf B))=\frac12\mathrm{sym}\mathbf B\,D^2\widetilde{\mathcal W}(x,\mathbf I)\,\mathrm{sym}\mathbf B=  \frac12\mathrm{sym}\mathbf B\,D^2\mathcal W(x,\mathbf I)\,\mathrm{sym}\mathbf B=\mathcal Q^I(x,\mathbf B).
\] 
%Indeed, we have
%\begin{equation*}\begin{array}{ll}
%&\displaystyle \frac12\mathrm{sym}\mathbf B\,D^2\widetilde{\mathcal W}(x,\mathbf I)\,\mathrm{sym}\mathbf B= \lim_{h\to 0}h^{-2}\widetilde {\mathcal W}(x, \mathbf I+h\mathbf B)=\lim_{h\to 0}h^{-2}\widetilde {\mathcal W}(x, \mathbf I+h\mathbf B+ o(h))=\\
%&\\
%&\displaystyle=\lim_{h\to 0}h^{-2}\widetilde {\mathcal W}(x, \exp(h\mathbf B))=\lim_{h\to 0}h^{-2} \mathcal W^I(x, \exp(h\mathbf B))=\lim_{h\to 0}h^{-2} \mathcal W(x, \exp(h\mathbf B))=\\
%&\\
%&\displaystyle= \lim_{h\to 0}h^{-2} \mathcal W(x,\mathbf I+h\mathbf B+ o(h))= \lim_{h\to 0}h^{-2} \mathcal W(x,\mathbf I+h\mathbf B)= \frac12\mathrm{sym}\mathbf B\,D^2\mathcal W(x,\mathbf I)\,\mathrm{sym}\mathbf B\\
%\end{array}
%\end{equation*}
%as claimed. 
%\MMM A possible choice is $\widetilde {\mathcal W}(x,\mathbf B)=\mathcal W^I(x,(\det\mathbf B)^{-1/3}\mathbf B)$.\KKK
\end{remark}

\begin{remark}\rm
A typical form of  $\mathcal W^I$  is the  Ogden incompressible strain energy density, see  \cite{C, HA, OGDEN}, 
%given by $$\mathcal W(\mathbf F):= \mathcal W_{iso} (x, (\det\mathbf F)^{-1/3}\,\mathbf F)+\mathcal W_{vol}(\det\mathbf F)$$ 
%We require $\omega(x,\cdot)$ to be $C^2$ smooth in a neighbor of $1$ and $\mathcal W^I(x,\cdot)$ to be $C^2$ smooth in a neighbor of $SO(3)$ on the special linear group $SL(3)$. Therefore, $\mathcal W$ is $C^2$ smooth in a neighbor of $SO(3)$.
%A typical example is the  Ogden strain energy density, see  \cite{C, HA, OGDEN}, 
%where the volumetric part  $\mathcal W_{vol}$ is a suitable $C^2$ convex function on $\mathbb R_+$, minimized at $1$ with value $0$, and the isochoric part is 
given by
\begin{equation*}\lab{ogden}
\mathcal W^I(\mathbf F):=\sum_{k=1}^{N}\frac{\mu_k}{\alpha_k}(\mathrm{Tr}((\mathbf F^T\mathbf F)^{\alpha_k/2})-3),\qquad \det\mathbf F=1,
\end{equation*}
where  $N, \mu_k, \alpha_k$ are  material constants, and extended to $+\infty$ if $\det\mathbf F\neq1$. If the material constants vary in a suitable range, the Ogden model satisfies the assumptions \eqref{framind}, \eqref{Z1}, \eqref{reg},    
\eqref{coerc}.  
In particular, we  refer to \cite{ADMDS} and \cite{MP} for a discussion about the growth properties and   the validity of \eqref{coerc} for the Ogden strain energy density and other standard models.
\end{remark}
 
 \begin{remark}\label{rmk2.2}
\rm It is worth to stress that 
%in contrast to the case of Dirichlet problem faced in \cite{MP},
%here in pure traction problem we cannot expect  weak $W^{1,p}(\om;\mathbb R^{3})$ convergence of 
 Theorem \ref{mainth1} does not hold
 without suitable compatibility assumptions on external forces. Not even relaxing \eqref{comp} by requiring non strict inequality therein would work.   \KKK
Indeed, choose $\mathbf f=\mathbf g\equiv 0$,
\begin{equation}
\lab{Wquad}
\W^I(\xx,\mathbf F)=\left\{\begin{array}{ll}
 &|\mathbf F^{T}\mathbf F-\Id|^{2}\  \hbox{if} \ \det \mathbf F=1\,,
 \vspace{0.1cm}\\
 %&\\
  &+\infty\ \hbox{ otherwise\,,}
 \end{array}\right.
\end{equation}
so that \eqref{infty}, \eqref{framind}, \eqref{Z1}, \eqref{reg}, \eqref{coerc} are satisfied (with $p=2$). 
Let
$
\v_j:=h_{j}^{-1}(\mathbf R-\mathbf I)x
$,
where $\mathbf R\in SO(3),\ \mathbf R\not=\mathbf I$ and $(h_j)_{j\in\mathbb N}\subset(0,1)$ is a vanishing sequence.
Then $\mathbf y_j=x+h_j\v_j=\mathbf Rx$ hence $\det\nabla\mathbf y_j=1$ and $\ \mathcal F_{h_{j}}^I(\v_j)= 0=\min_{W^{1,p}(\om,\mathbb R^{3})} \mathcal F_{h_{j}}^I$ so the sequence $(\v_j)_{j\in\mathbb N}$ satisfies \eqref{assinf2}. However, it has no subsequence that is
weakly converging in  $W^{1,p}(\om,\mathbb R^{3})$.  Moreover, $\mathbb E(\v_j)=c\,h_j^{-1}\,\mathbf W^2$ for some $c\in(0,1]$ and some $\mathbf W\in\mathbb R^{3\times3}_{\rm skew}$ such that $|\mathbf W|^2=2$, as a consequence of the Euler-Rodrigues formula \ref{eurod} for the representation of rotations that we shall recall in Section \ref{prel}. Therefore, the sequence $(\mathbb E(\v_j))_{j\in\mathbb N}$ has no subsequence that is bounded in $L^p(\om,\R^{3\times3})$. \KKK
\end{remark}

\begin{remark}\label{lastremark}\rm Under the assumptions of Theorem \ref{mainth1}, in general it is not possible to get  weak $W^{1,p}(\om,\R^3)$ compactness  of (almost) minimizing sequences.  Indeed, let us consider the following example. Let $\Omega=B_1$ be the unit ball of $\mathbb R^3$, centered at the origin. Let $\mathcal W^I$ be given by \eqref{Wquad}. Let $\mathbf f(x)=x$ and $\mathbf g\equiv 0$. It is readily seen that \eqref{globalequi} and \eqref{comp} are satisfied. On the other hand, let the divergence-free vector field $\v_*\in H^1(\om,\mathbb R^3)$ be a minimizer of $\mathcal E^I$ over $W^{1,p}(\om,\R
^3)$. Since $\om=B_1$ and $\v_*$ is divergence-free, there exists $\w_*\in H^2(\om,\mathbb R^3)$ such that $\v_*=\mathrm{curl}\, \w_*$ and by divergence theorem
\[
\mathcal L(\v_*)=-\int_{B_1}x\cdot\v_*\,dx=-\int_{B_1}x\cdot\mathrm{curl}\,\w_*\,dx=\int_{B_1}\dv(x\wedge\w_*)\,dx=\int_{\partial B_1}(x\wedge\w_*)\cdot x\,dx=0
\] 
so that %\MMM con questa scelta di $\mathcal W^I$, ci va 4 ?\KKK 
$$\mathcal E^{I}(\v_*)=4\int_{B_1} |\mathbb E(\v_*)|^2\,dx \ge 0,$$
and since
 $\mathcal E^I(\mathbf 0)=0$, we get $\mathcal E^I(\v_*)=0$. By Theorem \ref{mainth1}, we deduce that $0$ is the minimal value of both $\mathcal E^I$ and $\mathcal F^I$ over $W^{1,p}(\om,\mathbb R^3)$. Let now $\mathbf W\in\mathbb R^{3\times3}_{\rm skew}$ be such that $|\mathbf W|^2=2$. Let $\alpha \in(\tfrac12,1)$ and let us consider a vanishing sequence $(h_j)_{j\in\mathbb N}\subset(0,1)$ and the sequence 
 \begin{equation}\label{special}
 \v_{j}(x):=h_j^{\alpha-1}\mathbf Wx+h_j^{-1}\left(1-\sqrt{1-h_j^
 {2\alpha}}\right)\mathbf W^2x=0.
 \end{equation}
 The Euler-Rodrigues formula \eqref{eurod} implies that for any $j\in\mathbb N$ there holds  $\mathbf y_{j}(x):=x+h_j\v_{j}=\mathbf R_jx$ for a suitable $\mathbf R_j\in SO(3)$. This implies $\det\nabla \mathbf y_j=1$ for any $j\in\mathbb N$ and then 
by \eqref{Wquad} we get 
 \[
 \mathcal F^I_{h_j}(\v_j)=\frac1{h_j^2}\int_{B_1}\mathcal W(x, \mathbf I+h_j\nabla\v_j)\,dx=\frac1{h_j^2}\int_{B_1}|2h_j\mathbb E(\v_j)+h_j^2\nabla\v_j^T\nabla\v_j|^2\,dx.
 \]
 The above right hand side goes to $0$ as $j\to+\infty$, as shown after a computation  making use of \eqref{special}.  Therefore the sequence $(\v_j)_{j\in\mathbb N}$ satisfies \eqref{assinf2}. On the other hand, %from \eqref{special} we see that
% \[
 %\nabla\v_j(x)=h^{\alpha-1}\mathbf W+ h^{-1}\left(1-\sqrt{1-h^{2\alpha}}\right)\mathbf W^2,
 %\]
  $(\nabla\v_j)_{j\in\mathbb N}$ has no subsequence that is bounded in $L^p(\om,\mathbb R^{3\times3})$, since $\alpha\in(\tfrac12,1)$. 
\end{remark}
\begin{remark}\lab{unbound}
\rm  If $\mathcal L(\mathbf W_*^2 x) >0$ for some $\mathbf W_*\in \R^{3\times 3}_{\rm skew}$ then the  functionals $\mathcal F^I_h$ may admit no uniform bound from below as $h\to 0$. Indeed, let $\Omega=\{(x_1,x_2,x_3)\in\mathbb R^3: x_1^2+x_2^2<1,\,0<x_3<1\}$ and let %$B_1(0)$ the unit ball in $\R^2$ and $\om=B_1(0)\times (0,1),$
$\mathbf g(x)=(-x_1,-x_2,0),\ \ \mathbf f\equiv 0,  \ \mathbf W_*:= \mathbf e_1\otimes\mathbf e_2-\mathbf e_2\otimes\mathbf e_1$. Then \eqref{globalequi} is satisfied and since ${\mathbf W}_*^2= -\mathbf e_1\otimes\mathbf e_1-\mathbf e_2\otimes\mathbf e_2$ we get 
\[\mathcal L({\mathbf W}_*^2 x)=\int_{\partial\om}(x_1^2+x_2^2)\,d\H^2(x) >0\]
and by taking a vanishing sequence $(h_j)_{j\in\mathbb N}\subset(0,1)$ and $\v_j:= h_j^{-1}(\mathbf W_* x+{\mathbf W}_*^2 x)$ a direct computation shows that
\[\mathcal F^I_{h_{j}}(\v_j)=- h_j^{-1}\mathcal L({\mathbf W}_*^2 x)\to -\infty\]
as $j\to+\infty$, as claimed.

For  general $\Omega$, we notice that if the body is subject to a uniform boundary compressive  force field  then the above situation occurs.   Indeed, if $\n$ denotes the outer unit normal vector to $\partial\om$, and we choose
$\mathbf g=\lambda\n$ with $\lambda< 0$ and $\mathbf f\equiv 0$, then
\[
\int_{\partial\Omega}\mathbf g\cdot\mathbf W^{2} x\ d\H^{2}(x)\ =\ \lambda\,(\Tr\mathbf W^{2})\,\vert\Omega\vert
\ >\ 0
\qquad \forall \
\mathbf W\!\in\! \R^{3\times 3}_{\rm skew},\ \mathbf W\neq 0
\]
and by choosing  $\mathbf W\!\in\! \R^{3\times 3}_{\rm skew},\ \mathbf W\neq 0$ and $\v_j:= h_j^{-1}(\mathbf Wx+{\mathbf W}^2 x)$ we get
\[\mathcal F^I_{h_{j}}(\v_j)= \mathcal L (\v_j)=-\frac{\lambda}{h_{j}}\int_{\partial \Omega}\mathbf W^{2}x\cdot\n\,d\H^{2}(x)=-\frac{\lambda}{h_{j}}(\Tr\mathbf W^{2})|\Omega|\rightarrow -\infty\]
as $j\to+\infty$, as before.  On the other hand, if $\lambda>0$ we have a dilation  effect on the body and \eqref{comp} is satisfied.  \KKK
\end{remark} 
\begin{remark}\label{phi} \rm
 Let us consider external forces of the following form. Given $p$ such that \eqref{coerc} holds, let  $\mathbf f=\nabla\phi$, where $\phi\in W^{1,r}_0(\Omega)$, $r= \tfrac{3p}{4p-3}$, and let $\mathbf g=\lambda \mathbf n$, where $\lambda\in\R$ and $\mathbf n$ is the unit exterior normal vector to $\partial\Omega$, with  $\int_\om\phi(x)\,dx<\lambda|\om|$. 
It is readily seen that in this case \eqref{globalequi} and \eqref{comp} are satisfied.
%Suppose that $\mathbf f=\nabla\phi$, with $\phi\in W^{1,r}_0(\Omega)$, $r= \tfrac{3p}{4p-3}$, and that $\mathbf g=\lambda \mathbf n$. Suppose moreover that $\int_\om\phi(x)\,dx\le 0$ and $\lambda\ge 0$ (with at least one strict inequality). 
Moreover, by the divergence theorem, $\mathcal L(\v)=0$ for every divergence-free vector field $\v\in H^1(\om,\mathbb R^3)$. Therefore, under the assumptions of Theorem \ref{mainth1}, from the definition of $\mathcal E^I$ and from the estimate \eqref{ellipticity} below we deduce that $\mathrm{argmin}_{W^{1,p}(\om,\mathbb R^3)}\mathcal E^I$ coincides with the set of rigid displacements of $\Omega$ (i.e., displacements fields with vanishing infinitesimal strain tensor).
% of the form $\v(x)=\mathbf Wx+\mathbf a$ with $\mathbf W\in\mathbb R^{3\times3}_{\rm skew}$ and $\mathbf a\in\R^3$). 
From Theorem \ref{mainth1} we deduce in this case that the minimal value of both $\mathcal E^I$ and $\mathcal F^I$ is $0$.

 %Moreover, the lower bound that we shall provide later on in Section \ref{} is valid, as it does not need assumption \eqref{comp}. We deduce that in this case it is still possible to construct at least one sequence $(\v_j)_{j\in\mathbb N}\subset W^{1,p}(\om,\mathbb R^3)$ such that $\mathbb E(\v_j)\to\mathbb E(\v_*)$ weakly in $L^p(\om,\mathbb R^{3\times 3})$ as $j\to+\infty$, such that \eqref{assinf2} holds and $\v_*$ minimizes $\mathcal E^I$ over $W^{1,p}(\om,\mathbb R^3)$. Indeed, it is enough to take $\v_j=\v_*=\mathbf 0$ for any $j\in\mathbb N$. 
% Conversely, but following the same idea of Remark \ref{unbound}, it is worth noticing that for every  bounded Lipschitz open set $\om$ there exist $\mathbf f,\ \mathbf g$ such that \eqref{globalequi}, \eqref{comp} are satisfied. Indeed it is enough to choose $\mathbf g=\lambda\n$ with $\lambda > 0$, $\mathbf f\equiv 0$ and the same calculations of Remark \ref{unbound} show that $\mathcal L(\mathbf W^2 x) <0$ for every  $\mathbf W\!\in\! \R^{3\times 3}_{skew}\ \mathbf W\not\equiv 0$ as claimed.
\end{remark}
 \section{Notation and preliminary results}\label{prel}
Through the paper, $\mathbb R^{3\times 3}$ will denote the set of $3\times 3$ real matrices.
% and if $\mathbf F \in \mathbb R^{3\times 3}$ then $| \mathbf F|^2 =\hbox{Tr}(\mathbf F^{T}\mathbf F)$ will denote its squared Euclidean norm. 
$\mathbb R^{3\times 3}_{\rm sym}$ and $\mathbb R^{3\times 3}_{\rm skew}$ denote respectively the sets of symmetric and skew-symmetric matrices and for every $\mathbf B\in \mathbb R^{3\times 3}$ we define ${\rm sym\,}\mathbf B:=\frac{1}{2}(\mathbf B+\mathbf B^T)$
and  ${\rm skew\,}\mathbf B:=\frac{1}{2}(\mathbf B-\mathbf B^T)$. Moreover,   we set
\begin{equation}\label{cone}
\mathbb K:=\{\tau(\mathbf R-\mathbf I): \tau \geq 0,\  \mathbf R\in SO(3)\}\,.
\end{equation}
Given $\mathbf a, \,\mathbf b\in\R^3$, with $\mathbf a\wedge\mathbf b$ we denote the cross product.
A Sobolev vector field $\w\in W^{1,1}(\om,\mathbb R^3)$ is said to be an infinitesimal rigid displacement  if $\mathbb E(\v):={\rm sym\,}\nabla\v=0$ a.e. in $\om$, which is the case iff there exist $\aa,\,\mathbf b\in\R^3$ such that $\v(x)=\aa \wedge x+\mathbf b$ for
every $x\in \om$. By $H^1_{\mathrm{div}}(\om,\mathbb R^3)$ we denote the space of divergence-free $H^1(\om,\R^3)$ vector fields. The codomain of functions of $L^r(\Omega)$ or $W^{1,r}(\Omega)$ shall be
$\R$, $\R^3$ or $\R^{3\times3}$ and we shall often omit it from the notation. Bold letters will be used  for vector fields. %and  $\aa\wedge \mathbf b$ will denote the cross product of $\aa,\,\mathbf b\in\R^3$. 

 \subsection*{Euler-Rodrigues formula}
 For every $\mathbf R\in SO(3)$ there exist $\vartheta\in \mathbb R$ and $\mathbf W\in \mathbb R^{3\times 3}_{\mathrm{skew}},$ such that  $|\mathbf W|^{2}=2$ and such that $\exp(\vartheta\,\mathbf W)= \mathbf R$.
%By taking into account that $\mathbf W^{3}=-\mathbf W$, the Taylor's series expansion of $\,\vartheta\to \exp(\vartheta\,\mathbf W)=\sum_{k=0}^\infty \vartheta^k\mathbf W^k/k!\,$ yields the {\it Euler-Rodrigues formula}:
By taking into account that $\mathbf W^{3}=-\mathbf W$, the exponential matrix series $ \exp(\vartheta\,\mathbf W)=\sum_{k=0}^\infty \vartheta^k\mathbf W^k/k!$ yields the {\it Euler-Rodrigues formula}:
\begin{equation}\label{eurod}
\exp(\vartheta\,\mathbf W)\,=\,\mathbf R\,=\,\mathbf I\,+\,\sin\vartheta \,\mathbf W\,+\,(1-\cos\vartheta)\,\mathbf W^{2} .
\end{equation}
We also recall that
%\begin{equation}\label{normW2}
if  $\mathbf W\in \mathbb R^{3\times 3}_{\mathrm{skew}}$ and  $|\mathbf W|^{2}=2$ then  $|\mathbf W^2|^{2}=2$.
%\end{equation}

%It is well known that  for every $\mathbf R\in SO(3)$ there exist $\vartheta\in \mathbb R$ and $\mathbf W\in \mathbb R^{3\times 3}_{skew},$ $|\mathbf W|^{2}=2$ such that $\exp(\vartheta\,\mathbf W)= \mathbf R$.
%By taking into account that $\mathbf W^{3}=-\mathbf W$ , the Taylor's series expansion of $\,\vartheta\to \exp(\vartheta\,\mathbf W)=\sum_{k=0}^\infty \vartheta^k\mathbf W^k/k!\,$ yields the {\it Euler-Rodrigues formula}:
%\begin{equation}\label{eurod}
%\exp(\vartheta\,\mathbf W)\,=\,\mathbf R\,=\,\mathbf I\,+\,\sin\vartheta \,\mathbf W\,+\,(1-\cos\vartheta)\,\mathbf W^{2} \qquad N=2,3\,.
%\end{equation}
%In the following, when no confusion is possible, we denote with $L^p, \ H^1,\ W^{1,p}$ both the spaces $L^p(\om;\mathbb R^3), \ H^1(\om;\mathbb R^3),\ W^{1,p}(\om;\mathbb R^3)$ and $L^p(\om;\mathbb R^{3\times 3}), \ H^1(\om;\mathbb R^{3\times 3}),\ W^{1,p}(\om;\mathbb R^{3\times 3})$.

%\subsection*{Rescaled energies}
%We let $h>0$ we we define the rescaled stored energy functional $\mathcal F_h: W^{1,p}(\Omega,\mathbb R^3)\to[0,+\infty]$ as
%\[
%\mathcal F_{h}(\v):=\frac1{h^2}\int_\Omega\mathcal W(x,\mathbf I+h\nabla\v)\,dx
%\]  
% Similarly, we define the incompressible rescaled stored energy functional $\mathcal F_h^I:W^{1,p}(\Omega,\mathbb R^3)\to[0,+\infty]$ as
% \[
% \mathcal F_h^I(\v):= \frac1{h^2}\int_\Omega\mathcal W^I(x,\mathbf I+h\nabla\v)\,dx
% \]
 \subsection*{Properties of $\mathcal W$} Let assumptions \eqref{infty}, \eqref{framind}, \eqref{Z1}, \eqref{reg} and
\eqref{coerc}  hold.
  We recall that $\mathcal W$ is defined by $\mathcal W(x,\mathbf F):=\mathcal W^I(x,(\det\mathbf F)^{-1/3}\mathbf F)$, thus $\W^I\ge \W$.
%Let us discuss some first consequences of the above assumptions on $\mathcal W$.
  It is clear that $\mathcal W$ itself satifies  
 \eqref{framind}  and \eqref{Z1}, so that by \eqref{reg}, since  $\mathcal W\ge 0$, we deduce 
 \begin{equation}\label{smooth0}\W(x,\mathbf R)\!=\!0,\ D \W(x,\mathbf R)\!=\!0 \quad \forall \mathbf R\in SO(3),\quad \mbox{for a.e. $x\in\om$}.\end{equation}
% and in particular
 %the reference configuration
%has zero energy and
%is stress free. 
Due to frame indifference there exists a function $\mathcal V$ such that 
\begin{equation}\label{vi}
\W(x,\mathbf F)=\V(x,\textstyle{\frac{1}{2}}( \mathbf F^T \mathbf F - \mathbf I))\,,
\qquad
\ \forall\, \mathbf F\in \mathbb R^{3\times 3},\  \hbox{ for a.e. }x\in \om.
\end{equation}
Given $\mathbf B\in\mathbb R^{3\times 3}$,  $h> 0$, we have
$
%\mathcal V_{h}(x,\mathbf B):= 
\mathcal W(x,\Id+h\mathbf B)=\V(x,h\,{\rm sym}\mathbf B+\tfrac12h^{2}\mathbf B^{T}\mathbf B).
$
 By \eqref{smooth0} and by \eqref{reg} we get for a.e. $x\in\Omega$
\[\displaystyle %\lim_{h\to 0}\mathcal V_{h}(x,\mathbf B) =
\lim_{h\to 0} h^{-2}\mathcal W(x,\mathbf I+h\mathbf B)=
\frac{1}{2} \,{\rm sym} \mathbf  B\, D^2\V (x, \mathbf 0) \ {\rm sym}\mathbf  B=\frac12\, \mathbf B^T D^2\mathcal W(x,\mathbf I)\,\mathbf B,%=: \mathcal V_0(x,{\rm sym}\mathbf  B),
\]
hence \eqref{coerc} and \eqref{dSO3} imply that for a.e. $x\in \om$, as soon as $\Tr \mathbf B=0$,
\begin{equation}\label{ellipticity}\begin{aligned}
\frac12\, \mathbf B^T D^2\mathcal W(x,\mathbf I)\,\mathbf B&=\lim_{h\to 0} h^{-2}\mathcal W(x,\mathbf I+h\mathbf B+o(h))=\lim_{h\to0} h^{-2}\mathcal W(x,\exp(h\mathbf B))\\&=\lim_{h\to0} h^{-2}\mathcal W^I(x,\exp(h\mathbf B))\ge \limsup_{h\to 0}Ch^{-2}\,d^2(\exp(h\mathbf B),SO(3))\\&=\limsup_{h\to 0} C h^{-2}\left|\sqrt{\exp(h\mathbf B)^T\exp(h\mathbf B)}-\mathbf I\right|^2\\&= \limsup_{h\to 0} Ch^{-2}|\exp(h\,\mathrm{sym}\mathbf B)-\mathbf I|^2=C|\mathrm{sym}\mathbf B|^2.\end{aligned}
\end{equation}
%
%\begin{equation}
%\frac12\mathrm{sym}\mathbf B\,D^2\mathcal W(x,\mathbf I)\,\mathrm{sym}\mathbf B
%	%\mathcal V_0 (\xx,\hbox{\rm sym}\,\mathbf B)
%	\, \geq \, \frac{C}{4} \, |\hbox{\rm sym}\,\mathbf  B|^2
%	\quad \forall \  \mathbf  B\in \mathbb R^{3 \times 3}.
%	%\,,
%\ \hbox{ for a.e. }\,x\in\Omega.%,\ \mathbf B\in \M^{3\times 3}\,.
%\end{equation}
 Moreover, by expressing the remainder of Taylor's expansion in terms of the $x$-independent modulus of continuity $\omega$ of $D^2\W(x,\cdot)$ on the set $\mathcal U$ from \eqref{reg}, we have
\begin{equation}\lab{regW}
\left|\mathcal W(x, \mathbf I+h\mathbf B)- \frac{h^2}{2} \,{\rm sym} \mathbf  B\, D^2\mathcal W (x, \mathbf I) \ {\rm sym}\mathbf  B\right|\le h^2\omega(h|\mathbf B|)|\mathbf B|^2
\end{equation}
for any small enough $h$ (such that $h\mathbf B\in\mathcal U$),
where
 $\omega:\mathbb R_+\to\mathbb R$ is such that $\lim_{t\to 0^+}\omega(t)=0$. Similarly, $\mathcal V(x,\cdot)$ is $C^2$ in a neighbor of the origin in $\mathbb R^{3\times 3}$, with an $x$-independent modulus of continuity $\eta:\mathbb R_+\to \mathbb R$, which is increasing and such that $\lim_{t\to0^+}\eta(t)=0$, and  we have
\begin{equation}\lab{regV}
\left|\mathcal V(x, h\mathbf B)- \frac{h^2}{2} \,{\rm sym} \mathbf  B\, D^2\V (x, \mathbf 0) \ {\rm sym}\mathbf  B\right|\le h^2\eta(h|\mathbf B|)|\mathbf B|^2
\end{equation}
for any small enough $h$.
%where $\eta$ is an increasing function such that $\eta(t)\to 0$ as $t\to 0^+$ and
%\begin{equation*}
%%\mathcal V_{h}(x,\mathbf B):= 
%h^{-2}\mathcal W(x,\Id+h\mathbf B)=h^{-2}\V(x,h\,{\rm sym}\mathbf B+h^{2}\mathbf B^{T}\mathbf B),
%\end{equation*}
%where $\mathrm{sym}\mathbf B:=\frac12(\mathbf B^T+\mathbf B)$. 

 \subsection*{Sobolev-Poincar\'e inequality}
  Here and for the rest of this section,  $\Omega$ is a bounded connected set with Lipschitz boundary.
  Let $p\in(1,2]$.
 By Sobolev embedding, Sobolev trace embedding and by the Poincar\'e inequality for null-mean functions
  we have for any $\v\in W^{1,p}(\Omega,\mathbb R^3)$
 \begin{equation}\label{fried}
 \|\v-\bar\v\|_{L^{\frac{3p}{3-p}}(\Omega,\mathbb R^3)}+\|\v-\bar\v\|_{L^{\frac{2p}{3-p}}(\partial\Omega,\mathbb R^3)}\le K_F\|\nabla \v\|_{L^p(\Omega,\mathbb R^{3\times3})},
 \end{equation} 
 where $K_F$ is a constant only depending on $\Omega,p$ and $\bar\v:=\frac1{|\Omega|}\int_\Omega\v\,dx$.

 \subsection*{Projection on rigid motions}
 Let $p\in(1,2]$ and let
 %For every displacements field $\vv\in W^{1,p}(\Omega,\R^3)$,  we let
 %$\E(\vv)\!:=\! \hbox{sym }   \!\nabla\vv$ denotes the infinitesimal strain tensor field and we let
$$\mathcal R\!:=\!\{ \vv\in W^{1,1}(\Omega,\R^3): \E(\v)=\mathbf 0\}$$ denote the space spanned by the set of the infinitesimal rigid displacements.  We denote by
$\mathbb P\vv$  the  unique   %{\color{blue}ortogonale in che senso?, $W^{1,p}$ non e` hilbert} 
projection  of $\vv\in W^{1,p}(\Omega,\mathbb R^3)$ onto $\mathcal R$.
%We also let
 %\begin{equation}\label{cone}
%\mathbb K:=\{\tau(\mathbf R-\mathbf I): \tau \geq 0,\  \mathbf R\in SO(3)\}\,.
%\end{equation}

 \subsection*{Korn Inequality} Let $p\in(1,2]$. 
 For any $\v\in W^{1,p}(\Omega,\mathbb R^3)$, there is a unique couple $\mathbf W_\v\in\mathbb R^{3\times3}_{\mathrm{skew}}$, $\mathbf a_{\v}\in\mathbb R^3$ such that
 \[
 \|\v-\mathbb P\v\|_{L^p(\Omega)}=\min\{\|\v-(\mathbf W x+\mathbf a)\|_{L^p(\Omega)}: \mathbf W\in\mathbb R^{3\times3}_{\mathrm{skew}},\, \mathbf a\in\mathbb R^3\}=\|\v-(\mathbf W_\v\, x+\mathbf a_\v)\|_{L^p(\Omega)}
 \]
 and Korn inequality, see for instance \cite{N}, entails the
  existence of a constant $Q_{K}=Q_K(\om,p)$ such that
  \begin{equation}\label{secondkorn}
  \|\nabla\v-\mathbf W_\v\|_{L^p(\Omega)}\le Q_K \|\mathbb E(\v)\|_{L^p(\Omega)}.
  \end{equation}
  Moreover, by combining the latter with Sobolev and trace inequalities, we obtain the existence of a further constant $C_K=C_K(\om,p)$ such that for all $\v\in W^{1,p}(\om,\R^3)$
\begin{equation}\label{kornpoi}
%\int_{\om}|\vv-\mathbb P\vv|^{2}\,d\xx\le C_{K}\int_{\om}|\mathbb E(\vv)|^{2}\,d\xx.
\|\vv-\mathbb P\vv\|_{L^{\frac{3p}{3-p}}(\Omega,\R^3)}\,+\,
\|\vv-\mathbb P\vv\|_{L^{\frac{2p}{3-p}}(\partial\Omega,\R^3)}
\,\le\ C_{K}\ \|\mathbb E(\vv)\|_{L^p(\Omega,\mathbb R^{3 \times 3})}.
\end{equation}
 
% \subsection*{External forces}
% The imposed loads are described by a functional $\mathcal L:W^{1,p}(\Omega,\mathbb R^3)\to\mathbb R$ that takes into account both volume and surfaces forces. It is defined by
% \[
% \mathcal L(\v):=\int_{\partial\Omega}\mathbf f\cdot\v\,d\mathcal H^2+\int_\Omega \mathbf f\cdot\v\,dx,
% \]
% where $\mathbf f\in L^{\frac{p}{p-1}}(\partial\Omega,\mathbb R^3)$ and $\mathbf g\in L^{\frac{3p}{4p-3}}(\Omega,\mathbb R^3)$, where $p\in(1,2]$ is such that \eqref{coerc} holds. The Sobolev inequality $\|\v\|_{L^{\frac{3p}{3-p}}(\Omega,\mathbb R^3)}\le C_1 \|\nabla \v\|_{L^{p}(\Omega,\mathbb R^3)}$ and  the trace inequality $\|\v\|_{L^p(\partial\Omega,\mathbb R^3)}\le C_2\|\nabla \v\|_{L^p(\Omega,\mathbb R^3)}$ imply that $\mathcal L$ is a linear continuous functional on $W^{1,p}(\Omega,\mathbb R^3)$ and
% \[
% |\mathcal L(\v)|\le C_{\mathcal L}\|\nabla\v\|_{L^p(\Omega,\mathbb R^3)},
% \]  
%where $ C_{\mathcal L}$ is a constant that depends only on $\Omega, p, \mathbf f, \mathbf g$.

 \subsection*{Basic estimate on  external forces}
% The pair $\mathbf f,\,\mathbf g$ describing the load (\MMM metterei queste ipotesi direttamente insieme alla definizione di $\mathcal L$ \KKK) is said to be {\it equilibrated}  (\MMM balanced? \KKK) if
%\begin{equation}
%\label{globalequi}
%\int_{\partial\Omega} \mathbf g\cdot\mathbf z\,d\H^{N-1}+\int_{\Omega} \mathbf f\cdot\mathbf z\,dx\ =\ 0 \qquad \forall\;
%\mathbf z\in \mathcal R_p \,,
%\end{equation}
%and it is said to be {\it compatible}  if
%\beeq
%\lab{comp}
%\int_{\partial\Omega}\mathbf g\cdot\mathbf W^{2}\,x\ d\H^{N-1}\,+\,\int_{\Omega}\mathbf f\cdot\mathbf W^{2}\,x\ dx \ <\ 0\qquad\ \forall\ \mathbf W\in \mathbf R^{3\times 3}_{\mathrm{skew}}\ \ \hbox{s.t.}\ \mathbf W\not= \mathbf 0.
%\eneq
As a consequence of \eqref{kornpoi}, if  \eqref{globalequi} holds true we obtain the following estimate for functional $\mathcal L$: for any $\v\in W^{1,p}(\Omega,\mathbb R^3)$,  there holds %(\MMM qui assumo  $\mathbf f\in L^{p/(p-1)}$ che \`e pi\`u forte  della vecchia $L^{\frac{3p}{4p-3}}$\KKK)
\begin{equation}\label{elle}\begin{aligned}
|\mathcal L(\v)|&=|\mathcal L(\v-\mathbb P\v)|\\&\le \|\mathbf f\|_{L^{\frac{3p}{4p-3}}(\Omega,\mathbb R^3)}\|\v-\mathbb P\v\|_{L^{\frac{3p}{3-p}}(\Omega,\mathbb R^3)} + \|\mathbf g\|_{L^{\frac{2p}{3p-3}}(\partial\Omega,\mathbb R^3)}\|\v-\mathbb P\v\|_{L^{\frac{2p}{3-p}}(\partial\Omega,\mathbb R^3)} \\
&\le C_{\mathcal L}\|\mathbb E(\v)\|_{L^p(\Omega,\R^{3\times3})},
\end{aligned}\end{equation}
 where $C_{\mathcal L}:=C_K\left(\|\mathbf f\|_{L^{\frac{3p}{4p-3}}(\Omega,\mathbb R^3)}+ \|\mathbf g\|_{L^{\frac{2p}{3p-3}}(\partial\Omega,\mathbb R^3)}\right)$ and $C_K$ is the constant in \eqref{kornpoi}.

 \subsection*{Rigidity inequality}
We recall the rigidity inequality by Friesecke, James and M\"uller \cite{FJM0}, in its version from  \cite{FJM}, \cite{ADMDS}.
%\begin{lemma}({\bf Geometric Rigidity Inequality} \cite{FJM}, \cite{ADMDS}). 
Let $g_{p}$ the function defined in \eqref{gp}. There exists a constant $C_p=C_{p}(\om) >0$ such that for every $\yy\in W^{1,p}(\om,\mathbb R^{3})$
there exists a constant $\mathbf R\in SO(3)$ such that we have
\begin{equation}\label{muller}
\int_{\om}g_{p}(|\nabla\yy-\mathbf R|)\,dx\le C_{p}\int_{\om}g_{p}(d(\nabla \yy, SO(3)))\,dx.
\end{equation}
%\end{lemma}

We close this section with a result about convergence of infinitesimal strain tensors.
\begin{lemma}
\label{corcurl}
Let $p\in(1,2]$. Let
$(\w_{n})_{n\in\mathbb N}\subset W^{1,p}(\Omega,\R^3)$ be a sequence such that $\mathbb E(\w_n)\wconv \mathbf T$ weakly in $L^p(\Omega,\mathbb R^{3\times 3})$ as $n\to+\infty$.
Then there exists $\w\in W^{1,p}(\Omega,\R^3)$ such that $\mathbf T=\mathbb E(\w)$.
 If in addition we assume that  $\nabla\w_{n}\wconv \mathbf G$ weakly in $L^{p}(\om;\mathbb R^{3\times3})$,
then there exists a constant matrix $\mathbf W\in \mathbb R^{3\times 3}_{\mathrm{skew}}$ such that
$\nabla\w=\mathbf G-\mathbf W$.
\end{lemma}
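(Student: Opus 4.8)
The plan is to prove the two assertions of Lemma \ref{corcurl} by reducing them to known facts: the first to the fact that a distribution whose symmetric gradient is $L^p$ must be a Sobolev field (a generalized Korn--type closure), and the second to the elementary observation that the difference of two gradients with the same symmetric part is a constant skew-symmetric matrix.

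\medskip
\noindent\textbf{Step 1 (compactness and identification of the limit field).}
Fix any point $x_0\in\Omega$ and, for each $n$, subtract from $\w_n$ its infinitesimal rigid displacement part, i.e. replace $\w_n$ by $\tilde\w_n:=\w_n-\mathbb P\w_n$. This does not change $\mathbb E(\w_n)$, since $\mathbb E(\mathbb P\w_n)=\mathbf 0$. By the Korn--Poincar\'e inequality \eqref{kornpoi}, the sequence $(\tilde\w_n)$ is bounded in $W^{1,p}(\Omega,\R^3)$: indeed $\|\nabla\tilde\w_n\|_{L^p}\le\|\nabla\tilde\w_n-\mathbf W_{\w_n}\|_{L^p}+|\mathbf W_{\w_n}|\,|\Omega|^{1/p}$, the first term is controlled by $Q_K\|\mathbb E(\w_n)\|_{L^p}$ via \eqref{secondkorn}, and $\mathbf W_{\tilde\w_n}=\mathbf 0$ because $\tilde\w_n$ is already $L^p$-closest to rigid motions among its translates (equivalently one uses that $\mathbb P\tilde\w_n=\mathbf 0$ together with \eqref{secondkorn}); the $L^p$ bound on $\tilde\w_n$ itself then follows again from \eqref{kornpoi}. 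Hence, up to a subsequence, $\tilde\w_n\wconv\w$ weakly in $W^{1,p}(\Omega,\R^3)$ for some $\w\in W^{1,p}(\Omega,\R^3)$, and passing to the limit in the (weakly continuous) linear map $\tilde\w_n\mapsto\mathbb E(\tilde\w_n)$ gives $\mathbb E(\w)=\mathbf T$. Since the full sequence $\mathbb E(\w_n)=\mathbb E(\tilde\w_n)$ converges to $\mathbf T$, no subsequence is actually needed for this identification: the limit $\mathbf T$ is represented as $\mathbb E(\w)$ for this particular $\w$.

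\medskip
\noindent\textbf{Step 2 (the additional hypothesis: matching the gradients).}
Assume now also $\nabla\w_n\wconv\mathbf G$ weakly in $L^p(\Omega,\R^{3\times3})$. Then $\nabla\tilde\w_n=\nabla\w_n-\mathbf W_{\w_n}$, and since $(\mathbf W_{\w_n})$ is bounded in $\R^{3\times3}_{\rm skew}$ (a finite-dimensional space) we may extract a subsequence with $\mathbf W_{\w_n}\to\mathbf W$ for some $\mathbf W\in\R^{3\times3}_{\rm skew}$; hence $\nabla\tilde\w_n\wconv\mathbf G-\mathbf W$ weakly in $L^p$. On the other hand $\nabla\tilde\w_n\wconv\nabla\w$ from Step 1. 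By uniqueness of weak limits, $\nabla\w=\mathbf G-\mathbf W$, which is the claim. (Symmetrizing both sides recovers $\mathbb E(\w)=\sym\mathbf G=\mathbf T$, consistently.) Note that $\mathbf W$ is forced to be skew-symmetric precisely because $\sym\nabla\tilde\w_n=\mathbb E(\w_n)$ already converges to $\mathbf T=\sym\mathbf G$, so $\mathbf G-\nabla\w$ has vanishing symmetric part.

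\medskip
\noindent\textbf{Main obstacle.}
The only genuinely nontrivial ingredient is the uniform $W^{1,p}$ bound on $\tilde\w_n$, i.e. the passage from the $L^p$ bound on $\mathbb E(\w_n)$ alone to an $L^p$ bound on $\nabla\tilde\w_n$; this is exactly Korn's second inequality \eqref{secondkorn}/\eqref{kornpoi}, valid here since $\Omega$ is bounded, connected and Lipschitz and $p\in(1,2]$. Once this is in hand the argument is just weak compactness plus continuity of linear differential operators under weak convergence, together with the triviality that a bounded sequence in the finite-dimensional space $\R^{3\times3}_{\rm skew}$ has a convergent subsequence. One should also remark that in Step 2 the extracted subsequence suffices, because the statement only asserts \emph{existence} of such a constant matrix $\mathbf W$ and field $\w$; alternatively, since $\mathbf G$ and $\mathbf T$ are prescribed, $\mathbf W=\mathbf G-\nabla\w$ is uniquely determined and the convergence $\mathbf W_{\w_n}\to\mathbf W$ holds along the whole sequence.
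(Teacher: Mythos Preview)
Your argument is correct and is essentially the standard proof (the paper itself merely cites \cite[Lemma 3.2]{MPTARMA} and asserts that the extension to $p\in(1,2)$ is straightforward). The key idea---subtract the rigid projection $\mathbb P\w_n$, use Korn's inequality \eqref{secondkorn} with $\mathbf W_{\tilde\w_n}=\mathbf 0$ to get a $W^{1,p}$ bound on $\tilde\w_n$, then pass to weak limits---is exactly the right one.

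One expositional point: the displayed triangle inequality in Step~1, $\|\nabla\tilde\w_n\|_{L^p}\le\|\nabla\tilde\w_n-\mathbf W_{\w_n}\|_{L^p}+|\mathbf W_{\w_n}|\,|\Omega|^{1/p}$, is muddled, since $\nabla\tilde\w_n-\mathbf W_{\w_n}=\nabla\w_n-2\mathbf W_{\w_n}$ and \eqref{secondkorn} does not directly control this. But your parenthetical remark gives the clean route: since $\mathbb P\tilde\w_n=\mathbf 0$ one has $\mathbf W_{\tilde\w_n}=\mathbf 0$, and \eqref{secondkorn} applied to $\tilde\w_n$ yields $\|\nabla\tilde\w_n\|_{L^p}\le Q_K\|\mathbb E(\w_n)\|_{L^p}$ directly; together with \eqref{kornpoi} this gives the $W^{1,p}$ bound. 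I would drop the triangle inequality display and keep only this line. The rest of the proof is fine.
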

\begin{proof}
The proof is given in \cite[Lemma 3.2]{MPTARMA} for the case $p=2$. Its extension to $p\in(1,2)$ is straightforward.
\end{proof}

\section{Compactness}

We prove uniform $L^p(\Omega,\mathbb R^{3\times3})$ bounds for $\mathbb E(\v_j)$ on almost minimizing sequences $(\v_j)_{j\in\mathbb N}$ of $\mathcal F^I_{h_{j}}(\v_{j})$ as $h_j\to 0$. We start by showing that functionals $\mathcal F^I_h$ are uniformly bounded from below.

\begin{lemma}\label{lemmabound}{\bf (Boundedness from below)}. Assume \eqref{OMEGA}, \eqref{infty}, \eqref{framind},\eqref{Z1},\eqref{reg},\eqref{coerc},
   \eqref{globalequi} and \eqref{comp}.
There exists a constant $C> 0$ (only depending on $\Omega,p,\mathbf f,\mathbf g$) such that
$\mathcal F^I_h(\v)\ge-C$ for any $h\in (0,1)$ and any $\v\in W^{1,p}(\Omega,\mathbb R^3)$.
\end{lemma}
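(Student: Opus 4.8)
The plan is to split the displacement into its rigid part $\mathbb{P}\mathbf{v}$ and the remainder, use the frame indifference and coercivity of $\mathcal{W}^I$ together with the rigidity estimate \eqref{muller} to control $\|\mathbb{E}(\mathbf{v})\|_{L^p}$ and the distance of the "rotational" part of $\mathbf{I}+h\nabla\mathbf{v}$ from $SO(3)$, and then use the strict compatibility condition \eqref{comp} to absorb the dangerous linear term $\mathcal{L}(\mathbb{P}\mathbf{v})$ against a positive quadratic-in-$\mathbf{W}$ contribution coming from the elastic energy.

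\medskip

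First, I would fix $\mathbf{v}\in W^{1,p}(\Omega,\mathbb{R}^3)$ with $\mathcal{F}^I_h(\mathbf{v})<+\infty$ (otherwise nothing to prove), so that $\det(\mathbf{I}+h\nabla\mathbf{v})=1$ a.e. Apply the rigidity inequality \eqref{muller} to $\mathbf{y}=x+h\mathbf{v}$: there is $\mathbf{R}=\mathbf{R}_{h,\mathbf{v}}\in SO(3)$ with $\int_\Omega g_p(|\mathbf{I}+h\nabla\mathbf{v}-\mathbf{R}|)\,dx\le C_p\int_\Omega g_p(d(\mathbf{I}+h\nabla\mathbf{v},SO(3)))\,dx\le (C_p/C)\int_\Omega\mathcal{W}^I(x,\mathbf{I}+h\nabla\mathbf{v})\,dx = (C_p/C)\,h^2(\mathcal{F}^I_h(\mathbf{v})+\mathcal{L}(\mathbf{v}))$. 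Writing $\mathbf{R}=\mathbf{I}+\vartheta\mathbf{W}+(1-\cos\vartheta)\mathbf{W}^2$ via Euler–Rodrigues \eqref{eurod} with $|\mathbf{W}|^2=2$, one sees that $h^{-1}(\mathbf{R}-\mathbf{I})$ has the form $\alpha_h\mathbf{W}+\beta_h\mathbf{W}^2$ with $\beta_h\ge 0$. From the rigidity bound, using the definition of $g_p$ and Korn's inequality \eqref{secondkorn} on $\mathbf{v}-h^{-1}(\mathbf{R}-\mathbf{I})x$ (after noting $\nabla$ of the affine field $h^{-1}(\mathbf{R}-\mathbf{I})x$ is the constant matrix $h^{-1}(\mathbf{R}-\mathbf{I})$ whose symmetric part is $\beta_h\mathbf{W}^2$), I would extract the two estimates
\[
\|\mathbb{E}(\mathbf{v})-\beta_h\mathbf{W}^2\|_{L^p(\Omega)}^2 \ \le\ C'\Big(h^2\mathcal{F}^I_h(\mathbf{v})+h^2\mathcal{L}(\mathbf{v})\Big)\quad\text{if this quantity is }\le 1,
\]
and an analogous $L^p$-power-$p$ bound in the large regime, together with $|\beta_h\mathbf{W}^2|\,|\Omega|^{1/p}$ controlled similarly; in particular $\beta_h|\mathbf{W}|^2$ and $\|\mathbb{E}(\mathbf v)\|_{L^p}$ are controlled by $h^2(\mathcal F^I_h(\mathbf v)+\mathcal L(\mathbf v))$ plus its $1/p$-power.

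\medskip

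Next I would estimate $\mathcal{L}(\mathbf{v})$. Split $\mathbf{v}=(\mathbf{v}-\mathbb{P}\mathbf{v})+\mathbb{P}\mathbf{v}$; by \eqref{globalequi}, $\mathcal{L}(\mathbb{P}\mathbf{v})$ depends only on the skew part $\mathbf{W}_{\mathbf v}$ of the gradient of the rigid field. Using \eqref{elle}, $|\mathcal{L}(\mathbf{v}-\mathbb{P}\mathbf{v})|\le C_{\mathcal L}\|\mathbb{E}(\mathbf{v})\|_{L^p}$. For the rigid part, the key point is that $\mathbb{P}\mathbf{v}$ is close (in the sense of skew parts) to the affine rotation field: indeed $\mathbf{W}_{\mathbf v}$ must be close to $\alpha_h\mathbf{W}$ up to an error controlled by $\|\mathbb{E}(\mathbf v)-\beta_h\mathbf W^2\|_{L^p}$ via Korn, and on the quadratic scale $\mathcal{L}((\alpha_h\mathbf W)x)$ is linear in $\alpha_h$ while the compatibility term lives at order $\alpha_h^2$ (or rather at the order of $\beta_h\sim \alpha_h^2/2$ by the Euler–Rodrigues relation $1-\cos\vartheta\approx \vartheta^2/2$). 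Concretely, the genuinely linear-in-$\mathbf v$ contribution $\mathcal L((\alpha_h\mathbf W)x)$ vanishes by \eqref{globalequi} since $(\alpha_h\mathbf W)x$ is an infinitesimal rigid displacement, so the leading surviving term from the rigid part is $\beta_h\,\mathcal{L}(\mathbf{W}^2 x)$, which by \eqref{comp} is $\le -c\,\beta_h|\mathbf W|^2$ for some $c>0$ (using that $\{\mathbf W\in\mathbb R^{3\times3}_{\rm skew}:|\mathbf W|^2=2\}$ is compact and $\mathbf W\mapsto\mathcal L(\mathbf W^2x)$ is negative and continuous there). Hence $\mathcal{L}(\mathbf v)\le C_{\mathcal L}\|\mathbb E(\mathbf v)\|_{L^p} - c\,\beta_h|\mathbf W|^2 + (\text{small error})$.

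\medskip

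Finally I would close the argument by a Young-type absorption. Set $E:=\mathcal F^I_h(\mathbf v)$. From the rigidity step, $\|\mathbb E(\mathbf v)\|_{L^p}$ and $\beta_h|\mathbf W|^2$ are each $\lesssim (h^2 E + h^2\mathcal L(\mathbf v))^{1/2} + (h^2E+h^2\mathcal L(\mathbf v))^{1/p}$; since $h<1$ the prefactors $h^2$ help, so roughly $\|\mathbb E(\mathbf v)\|_{L^p}+\beta_h|\mathbf W|^2 \lesssim (E+\mathcal L(\mathbf v))^{1/2}+(E+\mathcal L(\mathbf v))^{1/p}$ — but here one must be careful because $\mathcal L(\mathbf v)$ itself appears; feeding $\mathcal L(\mathbf v)\le C_{\mathcal L}\|\mathbb E(\mathbf v)\|_{L^p}-c\beta_h|\mathbf W|^2+\text{error}$ back in, the negative term $-c\beta_h|\mathbf W|^2$ and $E=\mathcal F^I_h(\mathbf v)\ge 0$ let one derive a closed inequality of the form $X\le A X^{1/2}+BX^{1/p}+\text{const}$ (with $X:=\|\mathbb E(\mathbf v)\|_{L^p}+\beta_h|\mathbf W|^2+E$ or similar), which forces $X\le C''$ with $C''$ depending only on $\Omega,p,\mathbf f,\mathbf g$; in particular $E=\mathcal F^I_h(\mathbf v)\ge -C$. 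The main obstacle — and the place requiring genuine care rather than routine computation — is the bookkeeping in the middle step: matching the skew part $\mathbf W_{\mathbf v}$ of the Korn-projection with the Euler–Rodrigues rotation $\mathbf R_{h,\mathbf v}$ so precisely that the only uncontrolled piece of $\mathcal L(\mathbb P\mathbf v)$ is exactly the order-$\beta_h$ term to which \eqref{comp} applies, and ensuring the error terms are truly lower order so the absorption works uniformly in $h\in(0,1)$. This is essentially the pure-traction analogue of the coercivity argument of \cite{MPTARMA}, now complicated by the incompressibility constraint (which, however, only helps, since $\mathcal W^I\ge\mathcal W$ and $\det=1$ is enforced).
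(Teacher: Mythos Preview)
Your overall strategy---rigidity, Euler--Rodrigues, compatibility, absorption---is right, but the execution is more convoluted than necessary and the closing step has a real gap. Two concrete problems: (i) you write ``$E=\mathcal F^I_h(\mathbf v)\ge 0$'' in the absorption step, which is exactly what you are trying to prove; (ii) the self-referential inequality $X\le AX^{1/2}+BX^{1/p}+\mathrm{const}$ is never actually derived, and the circular chain $\mathcal L(\mathbf v)\to\|\mathbb E(\mathbf v)\|_{L^p}\to(E+\mathcal L(\mathbf v))$ makes it unclear that it can be. There is also a confusion in the decomposition: since $\mathbb E(\mathbb P\mathbf v)=0$, \eqref{globalequi} gives $\mathcal L(\mathbb P\mathbf v)=0$ outright, so the ``rigid part'' contributes nothing to $\mathcal L$ and there is no $\beta_h\,\mathcal L(\mathbf W^2x)$ term arising from $\mathbb P\mathbf v$. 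That term only appears if you split $\mathbf v$ against the \emph{rotation} field $h^{-1}(\mathbf R-\mathbf I)x$, not against the Korn projection.

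The paper's route avoids all of this by a single observation: you do not need the strict negativity in \eqref{comp} quantitatively, only its sign. Set $\mathbf y=x+h\mathbf v$, apply rigidity to get $\mathbf R$, and note via Euler--Rodrigues together with \eqref{globalequi} and \eqref{comp} that $\mathcal L((\mathbf R-\mathbf I)x)=(1-\cos\vartheta)\,\mathcal L(\mathbf W^2x)\le 0$. Hence, with $\mathbf u(x)=\mathbf Rx$ and $\mathbf m$ the mean of $\mathbf y-\mathbf u$,
\[
\mathcal L(\mathbf v)=\tfrac1h\,\mathcal L(\mathbf y-x)\ \le\ \tfrac1h\,\mathcal L(\mathbf y-\mathbf u-\mathbf m)\ \le\ \tfrac{C_{\mathbf f,\mathbf g}}{h}\,\|\nabla\mathbf y-\mathbf R\|_{L^p(\Omega)},
\]
the last step by the Sobolev--Poincar\'e inequality \eqref{fried}. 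Now there is no circularity: split $\Omega$ into $S=\{|\nabla\mathbf y-\mathbf R|\le 1\}$ (where $g_p(t)=t^2$) and its complement (where $g_p(t)\ge t^p$), and a direct application of Young's inequality balances $\tfrac1h\|\nabla\mathbf y-\mathbf R\|$ against $\tfrac{c}{h^2}\int g_p(|\nabla\mathbf y-\mathbf R|)$, producing an explicit lower bound depending only on $\Omega,p,\mathbf f,\mathbf g$. No Korn inequality, no tracking of $\beta_h$, no bootstrap. The strict inequality in \eqref{comp} is in fact not used here; it is needed only in the compactness lemma.
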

\begin{proof}
Let $\v\in W^{1,p}(\Omega,\mathbb R^3)$ and let $h\in(0,1)$. Let $\mathbf y:=\bb i+h\v$ and let $\mathbf R\in SO(3)$ be a constant matrix such that \eqref{muller} holds. Let $S:=\{x\in\Omega:|\nabla \mathbf y(x)-\mathbf R|\le 1\}$. By taking advantage of assumption \ref{coerc} and of the linearity of $\mathcal L$, since $g_p(t)=t^2$ for $0\le t\le 1$ and $g_p(t)\ge t^p$ for $t\ge 1$, we get
\begin{equation}\label{eins}\begin{aligned}
\mathcal F^I_h(\v)&\ge \frac{c}{h^2}\int_\Omega g_p(|\nabla \mathbf y-\mathbf R|)\,dx-\frac1h\,\mathcal L(\mathbf y-\bb i)\\&\ge \frac c{h^2}\, \int_S |\nabla \mathbf y-\mathbf R|^2\,dx+\frac c{h^2} \int_{\Omega\setminus S} |\nabla \mathbf y-\mathbf R|^p\,dx-\frac1h\,\mathcal L(\mathbf y-\bb i),
\end{aligned}\end{equation}
where $c>0$ is a constant only depending on $p$ and $\Omega$.
By the Sobolev-Poincar\'e inequality \eqref{fried}, letting $\mathbf u(x):=\mathbf R x$ and letting $\mathbf m$ denote the mean value of $\mathbf y-\mathbf u$ on $\Omega$, we have
\begin{equation}\label{poi}
\|\mathbf y-\mathbf u-\mathbf m\|_{L^{\frac{3p}{3-p}}(\Omega,\mathbb R^3)}
+\|\mathbf y-\mathbf u-\mathbf m\|_{L^{\frac{2p}{3-p}}(\partial\Omega,\mathbb R^3)}\le K_F\|\nabla\mathbf y-\mathbf R\|_{L^p(\Omega,\mathbb R^{3\times 3})}.
\end{equation}
By the Euler-Rodrigues formula \eqref{eurod} we represent $\mathbf R$ as $\mathbf R=\mathbf I+\sin\theta\mathbf W+(1-\cos\theta)\mathbf W^2$ for a suitable skew-symmetric matrix $\mathbf W$ and some $\theta\in(-\pi,\pi]$. Then, we notice that \eqref{globalequi} and \eqref{comp} entail $\mathcal L(\mathbf u-\bb i)\le 0$ so that $\mathcal L(\mathbf y-\bb i)\le \mathcal L(\mathbf y-\mathbf u-\mathbf m)$. As a consequence,  by means of the H\"older inequality and of  \eqref{poi} we get %(\MMM anche qui assumo  $\mathbf f\in L^{p/(p-1)}$ ma forse basta la vecchia $L^{\frac{3p}{4p-3}}$\KKK)
\[\begin{aligned}
&\mathcal L(\mathbf y-\bb i)\le \mathcal L(\mathbf y-\mathbf u-\mathbf m)\\&\quad\le \|\mathbf f\|_{L^{\frac{3p}{4p-3}}(\Omega,\mathbb R^3)}\,\|\mathbf y-\mathbf u-\mathbf m\|_{L^{\frac{3p}{3-p}}(\Omega,\mathbb R^3)}+\|\mathbf g\|_{L^{\frac{2p}{3p-3}}(\partial\Omega,\mathbb R^3)}\, \|\mathbf y-\mathbf u-\mathbf m\|_{L^{\frac{2p}{3-p}}(\partial\Omega, \mathbb R^3)}\\&\quad
\le C_{\mathbf f,\mathbf g}\, \|\nabla \mathbf y-\mathbf R\|_{L^p(\Omega,\mathbb R^{3\times 3})}
\le C_{\mathbf f,\mathbf g} \|\nabla\mathbf y-\mathbf R\|_{L^p(\Omega\setminus S,\mathbb R^{3\times 3})}+ C_{\mathbf f, \mathbf g} |\Omega|^{\frac{2-p}{2p}}\|\nabla\mathbf y-\mathbf R\|_{L^2(S,\mathbb R^{3\times 3})}\\
%&
%\le 
%\frac{p-1}{p}\;C_{\mathbf f,\mathbf g}^{\frac{p}{p-1}}+\frac1p\; \|\nabla \mathbf y-\mathbf R\|_{L^p(\Omega,\mathbb R^{3\times 3})}^p,
\end{aligned}
\] 
where $C_{\mathbf f,\mathbf g}:=K_F\left(\|\mathbf f\|_{L^{\frac{3p}{4p-3}}(\Omega,\mathbb R^3)}+\|\mathbf g\|_{L^{\frac{p}{p-1}}(\partial\Omega,\mathbb R^3)}\right)$, and then by Young inequality we obtain
\begin{equation}\label{zwei}\begin{aligned}
\mathcal L(\mathbf y-\bb i)&\le \frac{p-1}{p}\, C_{\mathbf f,\mathbf g}^{\frac{p}{p-1}}\left(\frac{2h}{cp}\right)^{\frac{1}{p-1}}+\frac{c}{2h}\|\nabla\mathbf y-\mathbf R\|_{L^p(\Omega\setminus S,\mathbb R^{3\times 3})}^p\\
&\qquad+ \frac{h}{2c}\,C_{\mathbf f,\mathbf g}^2\,|\Omega|^{\frac{2-p}{p}}+\frac{c}{2h}\|\nabla\mathbf y-\mathbf R\|^2_{L^2(S,\mathbb R^{3\times 3})},
\end{aligned}\end{equation}
where $c$ is the constant appearing in \eqref{eins}.
By joining together \eqref{eins} and \eqref{zwei} we get
\[\begin{aligned}
\mathcal F^I_h(\v)&\ge \frac{c}{2h^2}\|\nabla\mathbf y-\mathbf R\|_{L^p(\Omega\setminus S,\mathbb R^{3\times 3})}^p+  \frac{c}{2h^2}\|\nabla\mathbf y-\mathbf R\|^2_{L^2(S,\mathbb R^{3\times 3})}
\\&\qquad-C_{\mathbf f,\mathbf g}^{\frac{p}{p-1}}\left(\frac{2}{cp}\right)^{\frac{1}{p-1}}\,h^{\frac{2-p}{p-1}}
-\frac{1}{2c}\,C_{\mathbf f,\mathbf g}^2\,|\Omega|^{\frac{2-p}{p}} \ge
-C_{\mathbf f,\mathbf g}^{\frac{p}{p-1}}\left(\frac{2}{cp}\right)^{\frac{1}{p-1}}
-\frac{1}{2c}\,C_{\mathbf f,\mathbf g}^2\,|\Omega|^{\frac{2-p}{p}} 
\end{aligned}\]
as desired.
\end{proof}

\begin{lemma}\label{compactness}{\bf (Compactness)}.
Assume \eqref{OMEGA}, \eqref{infty}, \eqref{framind},\eqref{Z1},\eqref{reg},\eqref{coerc},
   \eqref{globalequi} and \eqref{comp}.
 Let $(h_{j})_{j\subset\mathbb N}\subset(0,1)$ be a vanishing a sequence  %be a decreasing sequence
 and let $(\v_{{j}})_{j\in\mathbb N}\subset W^{1,p}(\Omega,\R^3)$ be a sequence  such that \begin{equation}\label{quasi}\lim_{j\to+\infty}\left(\mathcal F^I_{h_{j}}(\v_{j})-\inf_{W^{1,p}(\Omega)}\mathcal F
 ^I_{h_j}\right) =0.\end{equation} %namely $\v_{{j}}$ is a minimizing sequence for $F_{h_{j}}$. 
  Then there exists $M>0$ such that $\|\mathbb E(\v_{j})\|_{L^{p}(\Omega)}\le M$ for any $j\in\mathbb N$.
\end{lemma}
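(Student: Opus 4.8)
The plan is to bound $\mathcal F^I_{h_j}(\v_j)$ from above by testing with the null displacement, then to use geometric rigidity to approximate the deformations $\mathbf y_j:=\bb i+h_j\v_j$ by rigid motions with $L^p$ gradient error of order $h_j$, and finally to control the rotation angles of these rigid motions through the strict compatibility condition \eqref{comp}. The last step is the crucial one: unlike the Dirichlet problem treated in \cite{MP}, here the boundary data do not pin down the rotations, so the bound on $\mathbb E(\v_j)$ cannot follow from rigidity alone (indeed $\nabla\v_j$ itself need not be bounded, cf.\ Remark \ref{lastremark}), and it is precisely \eqref{comp} that supplies the missing control on the skew-symmetric part.

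First I would note that, testing with $\v=0$ and using \eqref{Z1}, $\inf_{W^{1,p}(\Omega)}\mathcal F^I_{h_j}\le\mathcal F^I_{h_j}(0)=0$; since by Lemma \ref{lemmabound} the infima are finite, \eqref{quasi} forces $\mathcal F^I_{h_j}(\v_j)\le 1$ for all large $j$ (and $\mathcal F^I_{h_j}(\v_j)=+\infty$ can happen for at most finitely many $j$). As each $\v_j\in W^{1,p}(\Omega,\mathbb R^3)$ has $\mathbb E(\v_j)\in L^p$, it suffices to bound $\|\mathbb E(\v_j)\|_{L^p}$ for $j$ large. Arguing as in the proof of Lemma \ref{lemmabound} --- the rigidity inequality \eqref{muller} applied to $\mathbf y_j$, the splitting of $\Omega$ into $S_j:=\{|\nabla\mathbf y_j-\mathbf R_j|\le1\}$ and its complement, and the absorption of $\mathcal L(\mathbf y_j-\bb i)$ by Sobolev--Poincar\'e \eqref{fried} and Young's inequality --- one obtains $\mathbf R_j\in SO(3)$ and constants $c,C_1>0$ (depending only on $\Omega,p,\mathbf f,\mathbf g$) with
\[
\frac{c}{2h_j^2}\,\|\nabla\mathbf y_j-\mathbf R_j\|_{L^2(S_j)}^2+\frac{c}{2h_j^2}\,\|\nabla\mathbf y_j-\mathbf R_j\|_{L^p(\Omega\setminus S_j)}^p\ \le\ \mathcal F^I_{h_j}(\v_j)+C_1\ \le\ 1+C_1 .
\]
Setting $\mathbf z_j:=\mathbf y_j-\mathbf R_j x$, so that $\nabla\mathbf z_j=\nabla\mathbf y_j-\mathbf R_j$, a H\"older estimate on $S_j$ together with $h_j^2\le h_j^p$ (valid since $p\le2$, $h_j\in(0,1)$) gives $\|\nabla\mathbf z_j\|_{L^p(\Omega)}\le C_2^{1/p}h_j$ for a suitable $C_2=C_2(\Omega,p,\mathbf f,\mathbf g)$.

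Next, by the Euler--Rodrigues formula \eqref{eurod} I would write $\mathbf R_j=\mathbf I+\sin\theta_j\,\mathbf W_j+(1-\cos\theta_j)\,\mathbf W_j^2$ with $\mathbf W_j\in\mathbb R^{3\times3}_{\rm skew}$, $|\mathbf W_j|^2=2$ (so $|\mathbf W_j^2|=\sqrt2$) and $\theta_j\in(-\pi,\pi]$. Since $h_j\nabla\v_j=\nabla\mathbf z_j+(\mathbf R_j-\mathbf I)$ and $\sym(\mathbf R_j-\mathbf I)=(1-\cos\theta_j)\,\mathbf W_j^2$, taking symmetric parts gives
\[
h_j\,\mathbb E(\v_j)=\sym\nabla\mathbf z_j+(1-\cos\theta_j)\,\mathbf W_j^2 ,
\]
hence $\|\mathbb E(\v_j)\|_{L^p(\Omega)}\le C_2^{1/p}+\sqrt2\,|\Omega|^{1/p}\,\frac{1-\cos\theta_j}{h_j}$. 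The problem is thereby reduced to bounding $\frac{1-\cos\theta_j}{h_j}$; this is the main obstacle.

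Finally, to bound $\frac{1-\cos\theta_j}{h_j}$ I would use \eqref{comp}. Decompose $\mathcal L(\v_j)=h_j^{-1}\mathcal L(\mathbf z_j)+h_j^{-1}\mathcal L\big((\mathbf R_j-\mathbf I)x\big)$; since $\mathbf W_j x$ is an infinitesimal rigid displacement, \eqref{globalequi} gives $\mathcal L(\mathbf W_j x)=0$, whence $\mathcal L\big((\mathbf R_j-\mathbf I)x\big)=(1-\cos\theta_j)\,\mathcal L(\mathbf W_j^2 x)$. By continuity of $\mathbf W\mapsto\mathcal L(\mathbf W^2 x)$ on $W^{1,p}$ and compactness of $\{\mathbf W\in\mathbb R^{3\times3}_{\rm skew}:|\mathbf W|^2=2\}$, condition \eqref{comp} furnishes $\delta>0$ with $\mathcal L(\mathbf W^2 x)\le-\delta$ on that set, hence $\mathcal L\big((\mathbf R_j-\mathbf I)x\big)\le-\delta(1-\cos\theta_j)$. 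On the other hand, by \eqref{elle} and the bound on $\|\nabla\mathbf z_j\|_{L^p}$, $|\mathcal L(\mathbf z_j)|\le C_{\mathcal L}\|\mathbb E(\mathbf z_j)\|_{L^p}\le C_{\mathcal L}C_2^{1/p}h_j$. Since $\mathcal W^I\ge0$ implies $\mathcal F^I_{h_j}(\v_j)\ge-\mathcal L(\v_j)$, combining these facts gives
\[
1\ \ge\ \mathcal F^I_{h_j}(\v_j)\ \ge\ -C_{\mathcal L}C_2^{1/p}+\delta\,\frac{1-\cos\theta_j}{h_j} ,
\]
so $\frac{1-\cos\theta_j}{h_j}\le\delta^{-1}\big(1+C_{\mathcal L}C_2^{1/p}\big)$. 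Substituting into the estimate for $\|\mathbb E(\v_j)\|_{L^p}$ yields a uniform bound $M$ for $j$ large; enlarging $M$ to absorb the finitely many remaining indices completes the proof.
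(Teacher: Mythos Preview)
Your proof is correct and takes a genuinely different route from the paper's. The paper argues by contradiction: it assumes $t_j:=\|\mathbb E(\v_j)\|_{L^p}\to+\infty$, normalizes $\w_j:=\v_j/t_j$, and then carries out a delicate three-case analysis (according to whether $h_jt_j$ tends to a positive finite limit, to $0$, or to $+\infty$) to show that, up to subsequences, $\mathbb E(\w_j)\to\mathbb E(\w)$ strongly in $L^p$ with $\nabla\w$ the sum of a constant skew-symmetric matrix and an element of the cone $\mathbb K$ from \eqref{cone}. Only at the end is \eqref{comp} invoked, to force $\mathbb E(\w)=0$ and reach a contradiction with $\|\mathbb E(\w)\|_{L^p}=1$.

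Your argument is direct and considerably shorter. You recycle the intermediate estimate from the proof of Lemma~\ref{lemmabound} to get $\|\nabla\mathbf z_j\|_{L^p}\le Ch_j$, split off the symmetric part of $\mathbf R_j-\mathbf I$ via Euler--Rodrigues, and then exploit \eqref{comp} \emph{quantitatively}: compactness of $\{\mathbf W\in\mathbb R^{3\times3}_{\rm skew}:|\mathbf W|^2=2\}$ upgrades the strict inequality to a uniform gap $\mathcal L(\mathbf W^2x)\le-\delta$, which together with $\mathcal F^I_{h_j}(\v_j)\ge-\mathcal L(\v_j)$ yields the bound on $(1-\cos\theta_j)/h_j$ in one line. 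This avoids all weak-convergence and case-splitting machinery, does not use Lemma~\ref{corcurl} or the cone $\mathbb K$, and produces an explicit constant. The paper's approach, by contrast, extracts more structural information along the way (strong $L^p$ convergence of the normalized strains and identification of the limit in terms of $\mathbb K$), but none of that extra information is actually used later in the paper; only the uniform bound on $\|\mathbb E(\v_j)\|_{L^p}$ is needed for Theorem~\ref{mainth1}.
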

\begin{proof}
The argument extends the one of \cite[Lemma 3.6]{MPTARMA} to the weaker coercivity condition \eqref{coerc}.

For any $j\in\mathbb N$, by Lemma \ref{lemmabound} there holds \begin{equation}\label{ser}
-\infty<%\inf_{W^{1,p}(\Omega)}\mathcal F_{h_j}
 \inf_{W^{1,p}(\Omega)}\mathcal F^I_{h_j}\le \mathcal F^I_{h_j}(\bb 0)=0,\end{equation} therefore by considering \eqref{quasi} it is not restrictive to assume that $\mathcal F^I_{h_j}(\v_j)\le 1$ for any $j\in\mathbb N$. We assume by contradiction that the sequence $(t_j)_{j\in\mathbb N}$, defined as $t_j:=\|\mathbb E(\v_j)\|_{L^p(\Omega)}$, is unbounded, so that up to extraction of a not relabeled subsequence we have $t_j\to+\infty$ as $j\to+\infty$ and moreover $t_jh_j$ converge to a limit as $j\to+\infty$. We let $\w_j=\v_j/t_j$, so that $\|\mathbb E(\w_j)\|_{L^p(\Omega)}=1$ for any $j\in\mathbb N$, and along a not relabeled subsequence we have $\mathbb{E}(\w_j)\rightharpoonup\mathbb E(\w)$ weakly in $L^p(\Omega)$ for some $\w\in W^{1,p}(\Omega)$, thanks to Lemma \ref{corcurl}. By defining $\mathbf y_j:=\bb i+h_j\v_j$, we let $\mathbf R_j$ be the corresponding constant rotation matrix such that \eqref{muller} holds, so that by \eqref{coerc}, since $\mathcal F^I_{h_j}(\v_j)\le 1$, we get
\[
\int_\Omega g_p(|\nabla\mathbf y_j-\mathbf R_j|)\,dx\le \int_\Omega g_p(d(\nabla \mathbf y_j, SO(3)))\,dx\le
\int_\Omega \mathcal W(x,\mathbf I+h_j\nabla\v_j)\,dx\le h_j^2+h_j^2\mathcal L(\v_j).
\]
By inserting \eqref{elle} %(\MMM cos\`i qui serve $\mathbf f\in L^{p/(p-1)}$ \KKK) we deduce
\[\begin{aligned}
\int_\Omega g_p(|\nabla\mathbf y_j-\mathbf R_j|)\,dx&\le h_j^2+h_j^2\mathcal L(\v_j)
%= h_j^2+h_j^2\mathcal L(\v_j-\mathbb P\v_j)\le
%h_j^2+C_{\mathcal L}\,h_j^2\,\|\v_j-\mathbb P\v_j\|_{L^p(\Omega)}
\le h_j^2+C_\mathcal L \,h_j^2\,\|\mathbb E(\v_j)\|_{L^p(\Omega)}=h_j^2(1+C_{\mathcal L} \,t_j),
\end{aligned}\]
that is,
\begin{equation}\label{startingpoint}
\int_\Omega g_p(|\mathbf I-\mathbf R_j+t_jh_j\nabla \w_j|)\,dx\le h_j^2(1+C_{\mathcal L} \,t_j).
\end{equation}

We claim that $\nabla \w$ is the sum of a skew-symmetric matrix and an element of $\mathbb K$, where $\mathbb K$ is defined by \eqref{cone},
and that $\mathbb E(\w_j)\to \mathbb E(\w)$ in $L^p(\Omega)$, up to extraction of a further not relabeled subsequence. 
We shall prove the claim by  separately treating the following three possible cases: $h_jt_j\to\lambda\in(0,+\infty)$, $h_jt_j\to 0$ and $h_jt_j\to+\infty$ as $j\to+\infty$.

\textbf{Case 1: $h_j t_j\to \lambda$ as $j\to+\infty$ for some $\lambda>0$}.
It is easy to check that 
\begin{equation}\label{fromb}
2g_p(ax)\ge  (a^2\wedge a^p) g_p(x) \qquad\mbox{for any $x\ge 0 $ and any $a\ge 0$},\end{equation} 
so that \eqref{startingpoint} implies
\[\begin{aligned}
\frac12 ((h_j^2t_j^2)\wedge(h_j^pt_j^p))\int_\Omega g_p\left(\left|\frac{\mathbf I-\mathbf R_j}{h_jt_j}+\nabla \w_j\right|\right)\,dx&
\le \int_\Omega g_p\left(\left|{\mathbf I-\mathbf R_j}+h_jt_j\nabla \w_j\right|\right)\,dx\\&\le
 h_j^2(1+C_{\mathcal L}\,t_j),
\end{aligned}\]
therefore
\begin{equation}\label{limit1}
\lim_{j\to+\infty}\int_\Omega g_p\left(\left|\frac{\mathbf I-\mathbf R_j}{h_jt_j}+\nabla \w_j\right|\right)\,dx=0.
\end{equation}
We define $$A_j:=\{x\in\Omega: |\mathbf I-\mathbf R_j+t_jh_j\nabla \w_j(x)|\le t_jh_j \}.$$
Since $g_p(t)=t^2$ for $0\le t\le 1$ and $g_p(t)\ge t^p$ for $t\ge 1$, taking advantage of \eqref{limit1} we get
\[
\lim_{j\to+\infty}\int_\Omega \chi_{A_j} \left|\frac{\mathbf I-\mathbf R_j}{h_jt_j}+\nabla \w_j\right|^2\,dx+\lim_{j\to+\infty}\int_\Omega(1-\chi_{A_j})\,\left|\frac{\mathbf I-\mathbf R_j}{h_jt_j}+\nabla \w_j\right|^p\,dx=0
\]
so that both $$(1-\chi_{A_j})\, \left(\frac{\mathbf I-\mathbf R_j}{h_jt_j}+\nabla \w_j\right)\qquad\mbox{and}\qquad   \chi_{A_j} \left(\frac{\mathbf I-\mathbf R_j}{h_jt_j}+\nabla \w_j\right)$$ go to zero in $L^p(\Omega)$  as $j\to+\infty$, since $p\in(1,2]$. As a consequence, we obtain the convergence to zero of $h_j^{-1}t_j^{-1}{(\mathbf I-\mathbf R_j)}+\nabla \w_j$  in $L^p(\Omega)$ as $j\to+\infty$. Therefore, $\nabla \w_j$ converge in $L^p(\Omega)$, up to subsequences, to $\lambda^{-1}(\mathbf R-\mathbf I)$ for some suitable $\mathbf R\in SO(3)$ (thus $\mathbb E(\w_j)$ converge in $L^p(\Omega)$ to $\mathbb E(\w)$) and Lemma \ref{corcurl}  implies that $\nabla \w$ is the sum of a skew-symmetric matrix and an element of $\mathbb K$. 

\textbf{Case 2: $h_j t_j\to 0$ as $j\to+\infty$}. We assume wlog that $t_jh_j\le1$ for any $j\in\mathbb N$.
Writing $\mathbf R_j$ by means of the Euler-Rodrigues formula \eqref{eurod}, from \eqref{startingpoint} we get
$$
\int_\Omega g_p(|h_jt_j\nabla\w_j-\sin\theta_j \mathbf W_j-(1-\cos\theta)\mathbf W_j^2|)\,dx\le h_j^2(1+C_{\mathcal L} \,t_j),
$$ 
where, for any $j\in\mathbb N$, $\theta_j\in(-\pi,\pi]$ and $\mathbf W_j\in\mathbb R^{3\times3}$ is  skew-symmetric. Since $|\mathrm{sym}\mathbf F|\le |\mathbf F|$ and $g_p$ is increasing, we deduce
\[
\int_\Omega g_p(|h_jt_j\mathbb E(\w)-(1-\cos\theta)|\mathbf W_j^2|)\,dx\le h_j^2(1+C_{\mathcal L} \,t_j).
\]
Therefore, \eqref{fromb} implies (since $h_jt_j\le 1$)
\begin{equation}\label{fund}
\int_\Omega g_p\left(\left|\mathbb E(\w_j)-\frac{(1-\cos\theta_j)\mathbf W_j^2}{h_jt_j}\right|\right)\,dx\le \frac{2+2C_{\mathcal L} \,t_j}{t_j^2}.
\end{equation}
By taking advantage of the latter estimate,  since $g_p$ is increasing and satisfies $g_p(x)\le2 x^p$ for any $x\ge 0$, we get
\[
\begin{aligned}
|\Omega|\; g_p\left(\left|\frac{(1-\cos\theta_j)\mathbf W_j^2}{h_jt_j}\right|\right)&=\int_\Omega g_p\left(\left|\frac{(1-\cos\theta_j)\mathbf W_j^2}{h_jt_j}\right|\right)\,dx\\
&\le \int_\Omega g_p(|\mathbb E(\w_j)|)\,dx+\int_\Omega g_p\left(\left|\mathbb E(\w_j)-\frac{(1-\cos\theta_j)\mathbf W_j^2}{h_jt_j}\right|\right)\,dx\\
&\le 2\int_\Omega|\mathbb E(\w_j)|^p\,dx+ \int_\Omega g_p\left(\left|\mathbb E(\w_j)-\frac{(1-\cos\theta_j)\mathbf W_j^2}{h_jt_j}\right|\right)\,dx\\&\le 2+\frac{2+2C_{\mathcal L} \,t_j}{t_j^2}.
\end{aligned}
\] 
Since $t_j\to+\infty$ as $j\to+\infty$, we obtain the existence of a positive constant $C_*$ (not depending on $j$) such that
\[
\frac{1-\cos\theta_j}{h_jt_j}=\frac{\sqrt2}{2}\left|\frac{(1-\cos\theta_j)\mathbf W_j^2}{h_jt_j}\right|\le C_*.
\]
%thus $|\sin\theta_j|\le \sqrt{2(1-\cos\theta_j)}\le 2^{3/4}\sqrt{C_*h_jt_j}$ for any $j\in\mathbb N$.
In particular, up to subsequences, we have
\[
\lim_{j\to+\infty} \frac{(1-\cos\theta_j)\mathbf W_j^2}{h_jt_j}=\mathbf G^2
\]
for some suitable constant skew-symmetric matrix $\mathbf G$, so that from \eqref{fund} we deduce, since $g_p$ is continuous and increasing,
\[\begin{aligned}
\lim_{j\to+\infty}\int_\Omega g_p(|\mathbb E(\w_j)-\mathbf G^2|)\,dx&\le\lim_{j\to+\infty}\int_\Omega g_p\left(\left|\mathbb E(\w_j)-\frac{(1-\cos\theta_j)\mathbf W_j^2}{h_jt_j}\right|\right)\,dx\\&\qquad+\lim_{j\to+\infty}\int_\Omega g_p\left(\left|\frac{(1-\cos\theta_j)\mathbf W_j^2}{h_jt_j}-\mathbf G^2\right|\right)\,dx=0.
\end{aligned}\]
By the same argument of Case 1, we conclude that $\mathbb E(\w_j)\to \mathbf G^2$ in $L^p(\Omega)$ as $j\to+\infty$, hence $\mathbb E(\w)=\mathbf G^2$. We deduce that the skew-symmetric part of $\nabla \w$ is a gradient field, hence a constant skew-symmetric matrix $\bb\Lambda$, and that $\nabla \w=\bb\Lambda+\tfrac12\mathbf G^2$. By applying the Euler-Rodrigues formula \eqref{eurod}, we deduce the existence of $\mu>0$, of $\mathbf R\in SO(3)$ and of $\mathbf Q\in \mathbb R^{3\times 3}_{\mathrm{skew}}$ such that $\nabla \w=\mu(\mathbf R-\mathbf I)+\mathbf Q$,
so that indeed $\nabla \w$ is the sum of an element of $\mathbb K$ and a skew-symmetric matrix.

\textbf{Case 3: $h_j t_j\to +\infty$ as $j\to+\infty$}. We may assume in this case that $h_jt_j\ge 1$ for any $j\in\mathbb N$. By applying \eqref{startingpoint} and \eqref{fromb}
we get
\[\begin{aligned}
\int_\Omega g_p\left(\left|\frac{\mathbf I-\mathbf R_j}{h_jt_j}+\nabla \w_j\right|\right)\,dx\le
 \frac{2h_j^2(1+C_{\mathcal L} \,t_j)}{h_j^pt_j^p},
\end{aligned}\]
where the right hand side vanishes as $j\to+\infty$, and where $\tfrac{\mathbf I-\mathbf R_j}{h_jt_j}$ vanishes as well, since $\mathbf R_j-\mathbf I$ is bounded. By the same argument of Case 1, we conclude that $\nabla \w_j\to\mathbf 0$ in $L^{p}(\Omega)$ as $j\to+\infty$,  thus $\mathbb E(\w_j)\to\mathbb E(\w)=\mathbf 0$ in $L^p(\Omega)$.
By Lemma \ref{corcurl}   we deduce that $\nabla\w$ is a constant skewsymmetric matrix. 
%mi pare che se ne deduca che $\nabla\w$ sia una matrice costante skewsimmetrica\KKK.
This ends the last of the three cases and proves the claim.\\

Let $\tilde\w_j:=\w_j-\mathbb P\w_j$ so that $\mathbb E(\tilde\w_j)=\mathbb E(\w_j)$ and $\mathcal L(\tilde \w_j)=\mathcal L(\w_j)$ by \eqref{globalequi}. The claim we just proved implies $\mathbb E(\tilde \w_j)\to\mathbb E(\w)$ in $L^p(\Omega)$ as $j\to+\infty$.
%By $p$-Korn inequality (\MMM to be introduced \KKK) the sequence $(\tilde \w_j)_{j\in\mathbb N}$ is bounded in $L^p(\Omega)$, so that up to subsequence we have $\tilde \w_j\to \tilde \w$  weakly $L^p(\Omega)$ for some $\tilde w\in L^p(\Omega)$.  We deduce $\mathbb E(\tilde\w)=\mathbb E(\w)$. Assumption \ref{globalequi} implies $\mathcal L(\tilde\w)=\mathcal L(\w)$.
%But $|\mathcal L(\tilde w_j)|\le C_{\mathcal L}\|\w_j\|$
Therefore, since  \eqref{elle} implies %(\MMM anche qui serve $\mathbf f\in L^{p/(p-1)}$\KKK)
\[\begin{aligned}
|\mathcal L(\tilde \w_j)-\mathcal L( \w)|&=|\mathcal L(\tilde\w_j- \w)|
%=|\mathcal L(\tilde \w_j- \w-\mathbb P(\tilde \w_j- \w))|\\&
\le C_{\mathcal L}\,
%\left(\|\mathbf f\|_{L^{\frac{p}{p-1}}(\Omega)}+\|\mathbf g\|_{L^{\frac{p}{p-1}}(\partial\Omega)}\right)\,
\|\mathbb E(\tilde \w_j)-\mathbb E(\w)\|_{L^p(\Omega)},
\end{aligned}\]
we deduce 
%so that the convergence  of  $\mathbb E(\w_j)$ to $\mathbb E(\w)$ in $L^p(\Omega)$ entails 
$\mathcal L( \w_j)=\mathcal L(\tilde\w_j)\to\mathcal L(\w) $ as $j\to+\infty$.
%Thus the whole sequence $\mathcal L(\tilde\w_j)$ converges to $\mathcal L(\w)$ and 
As a consequence, thanks to 
\eqref{quasi} and \eqref{ser} we infer that
\begin{equation}\label{limi}\begin{aligned}
\mathcal L(\w)&=\lim_{j\to+\infty}\mathcal L(\w_j)=\lim_{j\to+\infty}\frac1{t_j}\,\mathcal L(\v_j)\\&=\lim_{j\to+\infty}\frac1{t_j}\left(\frac{1}{h_j^2}\int_\om\mathcal W^I(x,\mathbf I+h_j\nabla\v_j)\,dx-\mathcal F_{h_j}^I(\v_j)\right)\ge 0. \end{aligned}
\end{equation}
%hence
%\begin{equation}\label{Lw}
%\displaystyle
%-\mathcal L(\w)
%\,=\,
%-\limsup_{j\rightarrow +\infty} \mathcal L(\widetilde\w_j)
%\,=\,
%-\limsup_{j\rightarrow +\infty} \mathcal L(\w_{{j}})
%\,\leq\,
%\liminf_{j\rightarrow +\infty} t_j^{-1} \mathcal F_{h_{j}}(\v_{{j}})
%\,
%.
%\end{equation}
%Since \eqref{estimates} % $F_{h_{j}}(\v_{{j}})\leq C$ for suitable $C>0$
%entails
%$\limsup t_{j}^{-1}\mathcal F_{h_{j}}(\v_{{j}})\le 0$, by \eqref{Lw} we get $\,\mathcal L(\w)\,\ge\, 0\,.$ \\
Since we have already proven that $\nabla \w$ is the sum of a skewsymmetric matrix and an element of $\mathbb K$, we have
%By taking into account that $\nabla\w\in \mathbb K+\M^{N\times N}_{skew}$ then, either $\,\nabla\w\in \M^{N\times N}_{skew}\,$ or
$\w(\x)=\tau(\Rot-\Id)\x+\mathbf A\x+\mathbf c$ for suitable $\tau\ge0$,  $\Rot\in SO(3)$ such that
 $\Rot\neq\Id$,   $\mathbf A\in\mathbf R^{3\times 3}_{\mathrm{skew}}$ and $\mathbf c\in \R^3.$ By
 \eqref{eurod} there exist   $\vartheta\in (-\pi,\pi]$, $\theta\neq 0$, and $\mathbf W\in \mathbb R^{3\times 3}_{\mathrm{skew}}$,  $\mathbf W\neq \mathbf 0$, such that  $\Rot=\Id+(1-\cos\vartheta)\mathbf W^{2}+(\sin\vartheta)\mathbf W$. Hence,
\eqref{globalequi} and \eqref{limi}
yield
\begin{equation*}\begin{aligned}
 0\le\mathcal L(\w)&=\tau\int_{\partial\Omega}\mathbf g\cdot(\Rot-\Id)x\,d\H^{N-1}+\tau\int_{\om}\mathbf f\cdot(\Rot-\Id)x\,dx
%&\\
 \\&=\tau (1-\cos\vartheta)\left(\int_{\partial\Omega}\mathbf g\cdot\mathbf W^{2}x\,d\H^{N-1}+\int_{\om}\mathbf f\cdot\mathbf W^{2}x\,dx\right).
\end{aligned}
\end{equation*}
By taking \eqref{comp} into account, we conclude that $\tau=0$, so that $\nabla \w$ is a constant skew symmetric matrix
and then  $\E(\w)=\mathbf 0$. But  $\mathbb E(\w_j)\to \mathbb E(\w)$ in $L^p(\Omega)$ and $\|\mathbb E(\w_j)\|_{L^p(\Omega)}=1$ imply $\|\mathbb E(\w)\|_{L^{p}(\Omega)}=1$, a contradiction.
\end{proof}

\section{Lower bound}\label{Sectionpotential}
%\section{Preliminary results}\label{sectionprelresults}

In this section we prove  the lower bound $\liminf_{j\to +\infty}\mathcal F_{h_j}^I(\v_j)\ge \mathcal F^I(\v)$ as $\mathbb E(\v_j)\wconv\mathbb E(\v)$ weakly in $L^p(\om,\R^{3\times3})$ and $h_j\to0$. 
We start with two preliminary lemmas. %where some arguments from \cite{ADMDS, MP, MPTARMA} are .

\begin{lemma}\lab{sqrth} Assume \eqref{OMEGA},\eqref{globalequi}, \eqref{infty}, \eqref{framind}, \eqref{Z1}, \eqref{reg},    
\eqref{coerc}. Let $(h_j)_{j\in\mathbb N}\subset(0,1)$ be a vanishing sequence   and let $(\v_j)_{j\in\mathbb N}\subset W^{1,p}(\Omega,\mathbb R^3)$ be a sequence such that $\sup_{j\in\mathbb N}{\mathcal F^I_{h_j}}(\v_j)<+\infty$ and such that  $\mathbb E(\v_j)\wconv \mathbb E(\v)$ weakly in $L^p(\Omega,\mathbb R^{3\times3})$ as $j\to\infty$ for some $\v\in W^{1,p}(\Omega,\mathbb R^3)$.
Then there exists a constant matrix $\mathbf W\in \R^{3\times 3}_{\rm skew}$ such that, up to subsequences, $\sqrt h_j\nabla\v_j\to \mathbf W$ in $L^p(\Omega,\mathbb R^{3\times3})$ as $j\to\infty$. %\MMM una domanda: $\mathbf W$ \`e univocamente determinata da $\v$ ? \KKK {\color{blue} mah, dipende cosa vuoi dire: $W$ e $-W$ mi pare facciano lo stesso servizio, cosi` come tutte le matrici skew il cui quadrato \`e $W^2$. Peraltro, vista la stretta convessit\`a di $V_0$,  direi che rispetto a $W^2$ l'unicit\`a ci sia}
\end{lemma}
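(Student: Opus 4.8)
The plan is to exploit the coercivity assumption \eqref{coerc} together with the rigidity inequality \eqref{muller} in order to control $\nabla\v_j$ in terms of the energy, and then to rescale by $\sqrt{h_j}$. First I would set $\mathbf y_j:=\bb i+h_j\v_j$ and pick, for each $j$, a rotation $\mathbf R_j\in SO(3)$ realizing \eqref{muller}. Since $\mathcal W^I\ge\mathcal W$ and $\sup_j\mathcal F^I_{h_j}(\v_j)=:\Lambda<+\infty$, combining \eqref{coerc}, \eqref{muller} and the basic estimate \eqref{elle} on the loads gives, exactly as in the proof of Lemma \ref{compactness},
\[
\int_\Omega g_p\bigl(|\nabla\mathbf y_j-\mathbf R_j|\bigr)\,dx\ \le\ \int_\Omega g_p\bigl(d(\nabla\mathbf y_j,SO(3))\bigr)\,dx\ \le\ C\,h_j^2\bigl(1+\|\mathbb E(\v_j)\|_{L^p(\Omega)}\bigr),
\]
and the weak convergence $\mathbb E(\v_j)\wconv\mathbb E(\v)$ makes $\|\mathbb E(\v_j)\|_{L^p(\Omega)}$ bounded, so the right-hand side is $O(h_j^2)$. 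Writing $\mathbf I-\mathbf R_j+h_j\nabla\v_j$ for $\nabla\mathbf y_j-\mathbf R_j$ and using the elementary inequality \eqref{fromb} (with $a=h_j^{-1}$, noting $h_j<1$ so $a\ge 1$ and $a^2\wedge a^p=a^p=h_j^{-p}$), I get $\int_\Omega g_p\bigl(h_j^{-1}|\mathbf I-\mathbf R_j+h_j\nabla\v_j|\bigr)\,dx\le C h_j^{2-p}\to 0$, hence, splitting on the set where the argument is $\le 1$ and its complement exactly as in Case 1 of Lemma \ref{compactness}, $h_j^{-1}(\mathbf I-\mathbf R_j)+\nabla\v_j\to\mathbf 0$ in $L^p(\Omega)$. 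Since the left-hand summand is a constant matrix, this shows $\nabla\v_j-\mathbf C_j\to\mathbf 0$ in $L^p$ with $\mathbf C_j:=h_j^{-1}(\mathbf R_j-\mathbf I)\in\mathbb K$.

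Next I would multiply by $\sqrt{h_j}$: then $\sqrt{h_j}\,\nabla\v_j-\sqrt{h_j}\,\mathbf C_j\to\mathbf 0$ in $L^p(\Omega)$, so it remains to show that $\sqrt{h_j}\,\mathbf C_j=h_j^{-1/2}(\mathbf R_j-\mathbf I)$ converges, along a subsequence, to a skew-symmetric matrix. Write $\mathbf R_j=\mathbf I+\sin\vartheta_j\,\mathbf W_j+(1-\cos\vartheta_j)\mathbf W_j^2$ via Euler–Rodrigues \eqref{eurod}, with $\mathbf W_j\in\mathbb R^{3\times3}_{\rm skew}$, $|\mathbf W_j|^2=2$, $\vartheta_j\in(-\pi,\pi]$. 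From $|\mathbf R_j-\mathbf I|\le C h_j$ (a consequence of $\nabla\v_j-\mathbf C_j\to 0$ in $L^p$ together with the bound on $\|\nabla\v_j\|_{L^p}$ coming from Korn \eqref{secondkorn} and $\|\mathbb E(\v_j)\|_{L^p}\le M$) one gets $|\vartheta_j|\le C' h_j$, and therefore $\sin\vartheta_j/\sqrt{h_j}$ is bounded while $(1-\cos\vartheta_j)/\sqrt{h_j}=O(h_j^{3/2})\to 0$. Up to a subsequence $\vartheta_j/\sqrt{h_j}\to s\in\mathbb R$ and $\mathbf W_j\to\mathbf W_0$ with $|\mathbf W_0|^2=2$; hence $h_j^{-1/2}(\mathbf R_j-\mathbf I)\to s\,\mathbf W_0=:\mathbf W\in\mathbb R^{3\times3}_{\rm skew}$. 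Consequently $\sqrt{h_j}\,\nabla\v_j\to\mathbf W$ in $L^p(\Omega,\mathbb R^{3\times3})$, as claimed.

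The main obstacle I anticipate is the passage from $g_p$-estimates to genuine $L^p$ convergence of $\nabla\v_j-\mathbf C_j$ when $p<2$: one must handle separately the region where $|\mathbf I-\mathbf R_j+h_j\nabla\v_j|$ is small (quadratic growth of $g_p$) and where it is large ($p$-growth), and then invoke $p\le 2$ to combine the two pieces, precisely as in Case 1 of Lemma \ref{compactness}; reusing that argument verbatim is the cleanest route. A secondary subtlety is ensuring the rate $h_j^{2-p}$ rather than merely $o(1)$, which is what forces convergence of the rescaled gradients and not just boundedness — this uses $p\le 2$ and the explicit form \eqref{fromb}. Everything else (the Euler–Rodrigues expansion and the elementary estimate $|\vartheta_j|\le C h_j$) is routine.
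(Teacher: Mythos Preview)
Your overall architecture is the same as the paper's (rigidity inequality, Euler--Rodrigues, then extract angle bounds), but two steps are incorrect.

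First, the application of \eqref{fromb} is in the wrong direction: with $a=h_j^{-1}>1$ the inequality $2g_p(h_j^{-1}t)\ge h_j^{-p}g_p(t)$ is a \emph{lower} bound on $g_p(h_j^{-1}t)$, not an upper one. Using \eqref{fromb} with $a=h_j$ instead yields only
\[
\int_\Omega g_p\bigl(h_j^{-1}|\nabla\mathbf y_j-\mathbf R_j|\bigr)\,dx\ \le\ 2h_j^{-2}\int_\Omega g_p\bigl(|\nabla\mathbf y_j-\mathbf R_j|\bigr)\,dx\ \le\ C,
\]
a constant. So $\nabla\v_j-h_j^{-1}(\mathbf R_j-\mathbf I)$ is merely \emph{bounded} in $L^p(\Omega)$, not convergent to $\mathbf 0$. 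This is harmless: multiplying a bounded sequence by $\sqrt{h_j}$ already gives $\sqrt{h_j}\,\nabla\v_j-h_j^{-1/2}(\mathbf R_j-\mathbf I)\to\mathbf 0$ in $L^p$. (The paper obtains the same boundedness directly, by splitting $\Omega$ into the set where $|\nabla\mathbf y_j-\mathbf R_j|\le1$ and its complement and using H\"older on the former.)

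Second --- and this is the genuine gap --- your assertion that $\|\nabla\v_j\|_{L^p}$ is bounded ``from Korn \eqref{secondkorn} and $\|\mathbb E(\v_j)\|_{L^p}\le M$'' is false. Korn only gives $\|\nabla\v_j-\mathbf W_{\v_j}\|_{L^p}\le Q_K\|\mathbb E(\v_j)\|_{L^p}$ for \emph{some} skew constant $\mathbf W_{\v_j}$, and this skew part can blow up; Remark~\ref{lastremark} exhibits exactly such a sequence, with $\|\nabla\v_j\|_{L^p}\sim h_j^{\alpha-1}\to\infty$ and $\vartheta_j\sim h_j^{\alpha}$ for $\alpha\in(\tfrac12,1)$. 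Hence neither $|\mathbf R_j-\mathbf I|\le Ch_j$ nor $|\vartheta_j|\le C'h_j$ holds in general. The repair is to use only the \emph{symmetric} part: taking $\mathrm{sym}$ of the bounded quantity $\nabla\v_j-h_j^{-1}(\mathbf R_j-\mathbf I)$ shows that $\mathbb E(\v_j)-h_j^{-1}(1-\cos\vartheta_j)\mathbf W_j^2$ is bounded in $L^p$; since $\mathbb E(\v_j)$ is bounded, one gets $|1-\cos\vartheta_j|\le Ch_j$, and thus only $|\sin\vartheta_j|\le C\sqrt{h_j}$. This weaker rate is precisely what is needed: $h_j^{-1/2}(1-\cos\vartheta_j)\mathbf W_j^2\to\mathbf 0$ while $h_j^{-1/2}\sin\vartheta_j\,\mathbf W_j$ is bounded in $\mathbb R^{3\times3}_{\rm skew}$ and subconverges to some $\mathbf W$. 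This is the route the paper takes.
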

\begin{proof}
Through the proof, $C$ will always denote a generic positive constant only depending on $p$, $\Omega$, $\mathbf f$ and $\mathbf g$. 
By taking into account \eqref{globalequi}, \eqref{coerc}, and by applying \eqref{muller} to $\yy_j:=\x+h_j \v_j$, we see that for any $j\in\mathbb N$ there exists a constant matrix $\mathbf R_j\in SO(3)$ such that
$$h_j^{-2}\int_\Omega g_p(\vert \mathbf I+h_j\nabla\v_j-\mathbf R_j\vert)\,dx-
\mathcal L(\v_j-\mathbb P\v_j) \ \le \
C.
$$
%where $\mathbb P\,\v_j$ is the projection of $\v_j$ onto the set of infinitesimal rigid displacements.
By \eqref{elle} and by the boundedness of $\mathbb E(\v_j)$ in $L^p(\Omega,\mathbb R^{3\times3})$ we get
%Hence by Korn inequality we have
\begin{equation}
\label{bound}
h_j^{-2}\int_\Omega g_p(\vert \mathbf I+h_j\nabla\v_j-\mathbf R_j\vert)\,dx \leq\ C
%(1\,+\,
%\|\mathbb E(\v_j)\|_{L^p(\Omega)}).
\end{equation}

Due to the representation \eqref{eurod} of rotations,
%of the Lie algebra of $SO(N)$ (see \cite{H}),
for every $j\in \mathbb N$ there exist $\vartheta_j\in (-\pi,\pi]$ %\mathbb R$
and
$\mathbf W_j\!\in \!\mathbb R^{3\times 3}_{\rm skew}$, with 
$\vert\mathbf W_{j}\vert^{2}=2$, such that
 %=\sum_{k=0}^\infty \vartheta_j^k\mathbf W_j^k/k!\,.$
%By taking into account that $\mathbf W_j^{3}=-\mathbf W_j$ we get
%$$|\mathbf W_j^{2}|^{2}=\Tr(\mathbf W_j^{2}\mathbf W_j^{2})
%=\Tr(\mathbf W_j^{3}\mathbf W_j)=\Tr(-\mathbf W_j^{2})=\Tr(\mathbf W_j^T\mathbf W_j)=|\mathbf W_j|^{2}=2$$
 %via Taylor's series
\begin{equation*}\lab{euform}
 \mathbf R_j\,=\,\exp(\vartheta_j \mathbf W_j)\,=\,
\mathbf I\,+\,\sin\vartheta_j\,\mathbf W_j\, +\,(1-\cos\vartheta_j)\,\mathbf W_j^{2}.
\end{equation*}
Hence, by \eqref{bound} and \eqref{gp}, we have
\begin{equation}
\label{bound2}
\int_\Omega g_p(\vert h_j\nabla\v_j-\sin\vartheta_j \mathbf W_j-(1-\cos\vartheta_j)\mathbf W_j^2\vert)\,dx\leq Ch_{j}^2\,.
\end{equation}
By setting 
\begin{equation*}
A_j:=\{x\in\om: \vert h_j\nabla\v_j-\sin\vartheta_j \mathbf W_j-(1-\cos\vartheta_j)\mathbf W_j^2\vert\le 1\},
\end{equation*}
from \eqref{bound2} we get via H\"older inequality
\begin{equation}\label{ext1}\begin{aligned}
& \int_{A_j} \vert h_j\nabla\v_j-\sin\vartheta_j \mathbf W_j-(1-\cos\vartheta_j)\mathbf W_j^2\vert^p\,dx
\\&\qquad
\le   \left\{\int_{A_j} \vert h_j\nabla\v_j-\sin\vartheta_j \mathbf W_j-(1-\cos\vartheta_j)\mathbf W_j^2\vert^2\,dx\right\}^{p/2}|A_j|^{1-\frac{p}{2}}\le Ch_j^p,
\end{aligned}\end{equation}
and again by \eqref{gp} we also obtain
\begin{equation}\label{ext2}
\int_{\Omega\setminus A_j} \vert h_j\nabla\v_j-\sin\vartheta_j \mathbf W_j-(1-\cos\vartheta_j)\mathbf W_j^2\vert^p\,dx\leq Ch_{j}^2\le Ch_j^p
\end{equation}
Since
$${\rm sym} \Big(h_j\nabla \v_j-\sin\vartheta_j \mathbf W_j
-(1-\cos\vartheta_j)\mathbf W_j^2\Big)=h_{j}\mathbb E(\v_j)-(1-\cos\vartheta_j)\mathbf W_j^2$$
 we get
$$\int_\Omega\vert \mathbb E(\v_j)-(1-\cos\vartheta_j)h_{j}^{-1}\mathbf W_j^2\vert^{p}\,dx
\leq
%\frac
{C}
%{4}
.$$
By recalling that $\mathbb E(\v_j)\wconv\mathbb E(\v)$ in $L^p(\Omega,\mathbb R^{3\times 3})$ we get
\begin{equation}
\label{bound3}
\left | 1-\cos\vartheta_j\right |\leq C {h_{j}}
\end{equation}
hence
\begin{equation}
\label{bound4}
\vert\sin \vartheta_j\vert\leq \sqrt{2Ch_{j}}.
\end{equation}
%By \eqref{bound2}, \eqref{bound3} we have
%
%
%\begin{eqnarray}
%\label{convskewint}
%\frac 1 2
%\int_\Omega \!\vert && \!\!\!\!\!\!\!\!\!\!\!\!\!\!\!
%\sqrt{h_{j}} \nabla\v_j|^2d\xx \leq \\
%\nonumber
%\,&\leq&
%\int_\Omega \!\vert \sqrt{h_{j}}\nabla\v_j-h_{j}^{-1/2}\sin\vartheta_j \mathbf W_j
%\vert^2d\x
%+
%\int_\Omega \!\vert h_{j}^{-1/2}\sin\vartheta_j \mathbf W_j
%\vert^2d\x \leq \\
%\nonumber
%&\leq& (C'''+2L|\om|)\,h_{j} \ ,
%\end{eqnarray}
%\begin{equation}
%\label{convskewint}
%\int_\Omega \!\vert \sqrt{h_{j}}\nabla\v_j-h_{j}^{-1/2}\sin\vartheta_j \mathbf W_j
%\vert^p\,dx
%\ \,\leq \
%\ Ch_{j} \ ,
%\end{equation}
%\MMM riguardo a quest'ultima stima 
%mi sembra piuttosto che: \KKK 
By \eqref{ext1} and \eqref{ext2} we have
\begin{equation*}
\int_{\Omega} \vert \sqrt{h_j}\nabla\v_j-h_j^{-1/2}\sin\vartheta_j \mathbf W_j-h_j^{-1/2}(1-\cos\vartheta_j)\mathbf W_j^2\vert^p\,dx\leq Ch_{j}^{p/2},
\end{equation*}
therefore, %by \eqref{bound4}
by \eqref{bound3} and \eqref{bound4}
there exists a constant matrix
$\mathbf W\in \R^{3\times 3}_{\rm skew}$ such that, up to subsequences,
%\begin{equation}\label{proofconvsq}
  $\sqrt{h_{j}}\nabla\v_j\ \rightarrow\ \mathbf W$ in $L^p(\Omega,\mathbb R^{3 \times 3})$ as $j\to\infty$.
%\end{equation}
\end{proof}

\begin{lemma}\lab{Bj}Assume \eqref{OMEGA}, \eqref{globalequi}, \eqref{infty}, \eqref{framind}, \eqref{Z1}, \eqref{reg},    
\eqref{coerc}. Let $(h_j)_{j\in\mathbb N}\subset(0,1)$ be a vanishing sequence and let $(\v_j)_{j\in\mathbb N}\subset W^{1,p}(\Omega, \mathbb R^3)$ be a sequence  such that $\sup_{j\in\mathbb N}{\mathcal F^I_{h_j}}(\v_j)<+\infty$ and $\mathbb E(\v_j)\wconv \mathbb E(\v)$ weakly in $L^p(\Omega,\mathbb R^{3\times3})$ as $j\to\infty$ for some $\v\in W^{1,p}(\Omega,\mathbb R^3)$. Then $\v\in H^1(\Omega,\mathbb R^3)$ and up to subsequnces ${\mathbf 1}_{B_j}\mathbb E(\v_j)\wconv \mathbb E(\v)$ weakly in $L^2(\Omega,\mathbb R^{3\times3})$ as $j\to\infty$, where
\begin{equation}\label{lab} B_j:=\left\{x\in \om: |\sqrt h_j\nabla\v_j|\le 2|\mathbf W|+1\right\}\end{equation}
	 and where $\mathbf W\in\mathbb R^{3\times3}_{\mathrm{skew}}$ is given by   { \rm Lemma \ref{sqrth}}.  \KKK
\end{lemma}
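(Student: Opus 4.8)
The goal is to upgrade the weak $L^p$ convergence of $\mathbb E(\v_j)$ to a genuine $H^1$ statement, namely that $\v\in H^1$ and that the truncated strains ${\mathbf 1}_{B_j}\mathbb E(\v_j)$ converge weakly in $L^2$. The natural starting point is the energy bound $\sup_j\mathcal F^I_{h_j}(\v_j)<+\infty$ combined with the rigidity inequality \eqref{muller} applied to $\yy_j:=\x+h_j\v_j$, exactly as in the proof of Lemma \ref{sqrth}. This produces constant rotations $\mathbf R_j\in SO(3)$, written via Euler--Rodrigues \eqref{eurod} as $\mathbf R_j=\mathbf I+\sin\vartheta_j\,\mathbf W_j+(1-\cos\vartheta_j)\mathbf W_j^2$ with $|\mathbf W_j|^2=2$, and by \eqref{bound3}--\eqref{bound4} we have $|1-\cos\vartheta_j|\le Ch_j$ and $|\sin\vartheta_j|\le C\sqrt{h_j}$. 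Moreover, by Lemma \ref{sqrth}, $\sqrt{h_j}\nabla\v_j\to\mathbf W$ in $L^p$ (so $(1-\cos\vartheta_j)h_j^{-1}\mathbf W_j^2$ stays bounded and, up to a subsequence, converges to some symmetric negative semidefinite matrix which must equal a multiple of $\mathbf W^2$, consistently with the identity $\mathrm{sym}(\sqrt{h_j}\nabla\v_j)=h_j^{1/2}\mathbb E(\v_j)\to 0$ and $\mathrm{skew}(\sqrt{h_j}\nabla\v_j)\to\mathbf W$).

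First I would set $\mathbf G_j:=h_j\nabla\v_j-\sin\vartheta_j\,\mathbf W_j-(1-\cos\vartheta_j)\mathbf W_j^2=\nabla\yy_j-\mathbf R_j$, so that $\mathrm{sym}\,\mathbf G_j=h_j\mathbb E(\v_j)-(1-\cos\vartheta_j)\mathbf W_j^2$. On the set $B_j$ defined in \eqref{lab}, the gradient $\nabla\v_j$ satisfies $h_j|\nabla\v_j|\le\sqrt{h_j}(2|\mathbf W|+1)\to 0$, hence for $j$ large $|\mathbf G_j|\le h_j|\nabla\v_j|+|\sin\vartheta_j|\sqrt2+|1-\cos\vartheta_j|\sqrt2\le C\sqrt{h_j}\le 1$; in other words $B_j\subseteq A_j$ for $j$ large, where $A_j$ is the ``good set'' $\{|\mathbf G_j|\le 1\}$ from Lemma \ref{sqrth}. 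On $A_j$ the quadratic part of $g_p$ is active, so \eqref{bound2} gives $\int_{A_j}|\mathbf G_j|^2\,dx\le Ch_j^2$, whence $\int_{A_j}|\mathrm{sym}\,\mathbf G_j|^2\,dx\le Ch_j^2$, i.e.
\[
\int_{A_j}\big|h_j\mathbb E(\v_j)-(1-\cos\vartheta_j)\mathbf W_j^2\big|^2\,dx\le Ch_j^2.
\]
Dividing by $h_j^2$ and using $(1-\cos\vartheta_j)h_j^{-1}|\mathbf W_j^2|\le C$ (from \eqref{bound3}) one gets $\int_{A_j}|\mathbb E(\v_j)|^2\,dx\le C$, and a fortiori $\int_{B_j}|\mathbb E(\v_j)|^2\,dx\le C$ uniformly in $j$. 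Thus $({\mathbf 1}_{B_j}\mathbb E(\v_j))_{j\in\mathbb N}$ is bounded in $L^2(\Omega,\mathbb R^{3\times3})$ and, up to a subsequence, converges weakly in $L^2$ to some $\mathbf T\in L^2$.

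It remains to identify $\mathbf T$ with $\mathbb E(\v)$ and to deduce $\v\in H^1$. For this I would show that ${\mathbf 1}_{\Omega\setminus B_j}\mathbb E(\v_j)\to 0$ in $L^p$: on $\Omega\setminus B_j$ we have $|\sqrt{h_j}\nabla\v_j|>2|\mathbf W|+1$, while $\sqrt{h_j}\nabla\v_j\to\mathbf W$ in $L^p$, so $|\Omega\setminus B_j|\to 0$ and, by Vitali/equi-integrability of the $L^p$-convergent sequence $\sqrt{h_j}\nabla\v_j$ — or simply because $\|{\mathbf 1}_{\Omega\setminus B_j}(\sqrt{h_j}\nabla\v_j-\mathbf W)\|_{L^p}\to 0$ together with $\|{\mathbf 1}_{\Omega\setminus B_j}\mathbf W\|_{L^p}\to 0$ — one gets $\sqrt{h_j}\,{\mathbf 1}_{\Omega\setminus B_j}\nabla\v_j\to 0$ in $L^p$; however this only controls $\nabla\v_j$ weighted by $\sqrt{h_j}$, which is not immediately enough. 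A cleaner route: since $\mathbb E(\v_j)\wconv\mathbb E(\v)$ in $L^p(\Omega)$ and ${\mathbf 1}_{\Omega\setminus B_j}\to 0$ in measure with $\|{\mathbf 1}_{\Omega\setminus B_j}\|_{L^\infty}=1$, for any fixed test function the product ${\mathbf 1}_{B_j}\mathbb E(\v_j)$ has the same weak-$L^p$ limit as $\mathbb E(\v_j)$, namely $\mathbb E(\v)$, by a standard argument using that $(\mathbb E(\v_j))$ is weakly $L^p$-convergent hence $p$-equi-integrable and $|\Omega\setminus B_j|\to0$. Since the $L^2$ weak limit $\mathbf T$ and the $L^p$ weak limit $\mathbb E(\v)$ of the same sequence ${\mathbf 1}_{B_j}\mathbb E(\v_j)$ must agree (test against $C_c^\infty$ functions, which lie in both dual spaces), we conclude $\mathbf T=\mathbb E(\v)\in L^2$. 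Finally, $\mathbb E(\v)\in L^2$ together with $\v\in W^{1,p}$ and Korn's inequality \eqref{secondkorn}–\eqref{kornpoi} upgrades $\v$ (modulo an infinitesimal rigid displacement, which is smooth) to $H^1(\Omega,\mathbb R^3)$, and then ${\mathbf 1}_{B_j}\mathbb E(\v_j)\wconv\mathbb E(\v)$ weakly in $L^2$, as claimed.

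The main obstacle I anticipate is the identification step: making rigorous that the weak $L^p$ limit of $\mathbb E(\v_j)$ and the weak $L^2$ limit of the truncations ${\mathbf 1}_{B_j}\mathbb E(\v_j)$ coincide, i.e.\ that no mass escapes on $\Omega\setminus B_j$ in the limit. This hinges on the fact that $|\Omega\setminus B_j|\to0$ (a consequence of $\sqrt{h_j}\nabla\v_j\to\mathbf W$ in $L^p$ and $|\mathbf W|$ being the pointwise a.e.\ limit along a subsequence, so that eventually $|\sqrt{h_j}\nabla\v_j|\le 2|\mathbf W|+1$ except on a set of vanishing measure) combined with the $p$-equi-integrability automatically enjoyed by the weakly $L^p$-convergent sequence $\mathbb E(\v_j)$; once these two ingredients are in place the identification is routine. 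The quadratic bound on $B_j$ itself is the easy part, being essentially a re-reading of the estimates already obtained in the proof of Lemma \ref{sqrth}.
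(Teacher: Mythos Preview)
Your overall strategy matches the paper's: obtain the uniform $L^2$ bound on ${\mathbf 1}_{B_j}\mathbb E(\v_j)$ from the quadratic regime of $g_p$ via the estimates \eqref{bound2}--\eqref{bound3} already established in Lemma~\ref{sqrth}, then identify the weak $L^2$ limit with $\mathbb E(\v)$ using that $|\Omega\setminus B_j|\to 0$. Your route to the $L^2$ bound through the inclusion $B_j\subseteq A_j$ for large $j$ is a perfectly valid variant of the paper's direct use of $g_p(t)\ge c_\delta t^2$ on $[0,\delta]$.

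There is one genuine slip in the identification step. You write that $(\mathbb E(\v_j))$ is ``weakly $L^p$-convergent hence $p$-equi-integrable''; this implication is false for $p>1$ (e.g.\ $n^{1/p}{\mathbf 1}_{[0,1/n]}\rightharpoonup 0$ in $L^p(0,1)$ but is not $p$-equi-integrable). Fortunately your conclusion survives by a dual argument: for any $g\in L^{p'}(\Omega)$ one has ${\mathbf 1}_{B_j}g\to g$ strongly in $L^{p'}$ by dominated convergence (since $|\Omega\setminus B_j|\to 0$), and pairing the $L^p$-bounded sequence $\mathbb E(\v_j)$ against this gives $\int_\Omega {\mathbf 1}_{B_j}\mathbb E(\v_j)\cdot g\to\int_\Omega \mathbb E(\v)\cdot g$; this is what you actually need. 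The paper handles this step slightly differently and avoids the issue altogether: it drops to an exponent $q\in(1,p)$, where H\"older and $|B_j^c|\to 0$ give $\|{\mathbf 1}_{B_j^c}\mathbb E(\v_j)\|_{L^q}\to 0$ \emph{strongly}, so $\mathbb E(\v_j)$ and ${\mathbf 1}_{B_j}\mathbb E(\v_j)$ share the same weak $L^q$ limit, which must then be both $\mathbb E(\v)$ and the $L^2$ weak limit $\mathbf T$. Either fix closes the argument; just do not invoke $p$-equi-integrability.
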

\begin{proof} Again, through the proof we denote by $C$ the various  positive constants, possibly  depending only on $\Omega, p, \mathbf f, \mathbf g$.
Let $\theta_j$ and $\mathbf W_j$ be as in the proof of Lemma \ref{sqrth}, so that \eqref{bound3} holds. \KKK
We claim that ${\mathbf 1}_{B_j}\mathbb E(\v_j)$ is bounded in $L^2(\Omega,\mathbb R^{3\times 3})$. To this aim it is useful to notice that by \eqref{gp} for every $\delta>0$ there exists $c=c(p,\delta)>0$ such that $g_p(t)\ge ct^2$ for every $t\in [0,\delta]$. Therefore by taking into account that for $j$ large enough 
\begin{equation*}
{\mathbf 1}_{B_j}|h_j\mathbb E(\v_j)-(1-\cos\theta_j){\mathbf W}_j^2|\le 2|\mathbf W|+1.
\end{equation*}
we may fix $\delta=2|\mathbf W|+1$ and obtain for any $j$ large enough 
\begin{equation*}\begin{aligned}
&\displaystyle\int_{B_j}|\mathbb E(\v_j)|^2\,dx\le 2h_j^{-2}\int_{B_j}|h_j\mathbb E(\v_j)-(1-\cos\theta_j)\mathbf W_j^2|^2\,dx+2|B_j|\,h_j^{-2}\,|(1-\cos\theta_j)\mathbf W_j^2|^2\\
&\displaystyle\qquad \le Ch_j^{-2}\int_{B_j}g_p(|h_j\mathbb E(\v_j)-(1-\cos\theta_j){\mathbf W}_j^2|)\,dx+2|B_j|\,h_j^{-2}\,|(1-\cos\theta_j){\mathbf W}_j^2|^2\\
&\displaystyle\qquad \le Ch_j^{-2}\int_{B_j}g_p(|h_j \nabla\v_j-(1-\cos\theta_j)\mathbf W_j^2-\sin\theta_j{\mathbf W}_j|)\,dx+2{h_j}^{-2}|B_j||(1-\cos\theta_j)\mathbf W_j^2|^2\\
%&\displaystyle\qquad\le{\mathcal F}^I_{h_j}(\v_j)+\mathcal L(\v_j)+2|B_j|\,h_j^{-2}\,|(1-\cos\theta_j)\mathbf W_j^2|^2
\end{aligned}
\end{equation*}
and by arguing as in the proof of Lemma \ref{sqrth}, see \eqref{bound2} and \eqref{bound3}, we get 
\[\displaystyle\int_{B_j}|\mathbb E(\v_j)|^2\,dx\le C\]
as claimed. On the other hand for every $q\in (1,p)$ we have
\begin{equation*}
\displaystyle\int_{B_j^c}|\mathbb E(\v_j)|^q\,dx\le \left(\int_{B_j^c}|\mathbb E(\v_j)|^p\,dx\right)^{q/p}|B_j^c|^{(p-q)/p}\to 0
\end{equation*}
since $|B_j^c|\to 0$ by Chebyshev inequality and by Lemma \ref{sqrth}. 
By taking into account that
\begin{equation*}
\mathbb E(\v_j)={\mathbf 1}_{B_j^c}\mathbb E(\v_j)+{\mathbf 1}_{B_j}\mathbb E(\v_j)
\end{equation*}
and by assuming wlog that ${\mathbf 1}_{B_j}\mathbb E(\v_j)\wconv \u$ weakly in $L^2(\om,\R^{3\times3})$ we get $\mathbb E(\v_j)\wconv \u$
weakly in $L^q(\om,\R^{3\times3})$ and recalling that $\mathbb E(\v_j)\wconv \mathbb E(\v)$ weakly in $L^p(\om,\mathbb R^{3\times3})$ we get $\u=\mathbb E(\v)\in L^2(\om,\R^{3\times3})$ thus proving that $\v\in H^1(\om,\R^3)$  by Korn inequality. \KKK
\end{proof}
 \begin{lemma}\label{lowerbd}{\bf (Lower bound)}. Assume \eqref{OMEGA}, \eqref{infty}, \eqref{framind}, \eqref{Z1}, \eqref{reg},    
\eqref{coerc}. Let $(h_j)_{j\in\mathbb N}\subset(0,1)$ be a vanishing sequence  and let $(\v_j)_{j\in\mathbb N}\subset W^{1,p}(\Omega,\mathbb R^3)$ be a sequence such that $ \mathbb E(\v_j)\wconv \mathbb E(\v)$ weakly in $L^p(\Omega,\mathbb R^{3\times3})$ as $j\to\infty$. Then
\begin{equation}\lab{liminf}
\displaystyle\liminf_{j\to +\infty}\mathcal F_{h_j}^I(\v_j)\ge \mathcal F^I(\v).
\end{equation}
\end{lemma}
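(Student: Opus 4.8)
The plan is to reduce the liminf inequality to an inequality on a good set where the strains are uniformly $L^2$-bounded, and then to exploit the quadratic Taylor expansion of $\mathcal W$ together with lower semicontinuity of convex integral functionals. First I would discard the trivial case: if $\liminf_j \mathcal F^I_{h_j}(\v_j)=+\infty$ there is nothing to prove, so passing to a subsequence I may assume $\sup_j \mathcal F^I_{h_j}(\v_j)<+\infty$ and that the liminf is attained as a limit. Then Lemma \ref{Bj} applies: $\v\in H^1(\Omega,\mathbb R^3)$, and up to a further subsequence $\mathbf 1_{B_j}\mathbb E(\v_j)\wconv \mathbb E(\v)$ weakly in $L^2(\Omega,\mathbb R^{3\times3})$, with $B_j$ as in \eqref{lab} and $\mathbf W\in\mathbb R^{3\times3}_{\rm skew}$ the matrix from Lemma \ref{sqrth} (so that in particular $\sqrt{h_j}\nabla\v_j\to\mathbf W$ in $L^p$). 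Since $\mathbb E(\v_j)\wconv\mathbb E(\v)$ in $L^p$, the linear term converges, $\mathcal L(\v_j)\to\mathcal L(\v)$, by the estimate \eqref{elle} (after subtracting $\mathbb P\v_j$ and using \eqref{globalequi}); so the whole game is the elastic part.

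\textbf{Key steps.} For the elastic energy I would work on the set $B_j$ only, using $\mathcal W^I\ge \mathcal W\ge 0$ to throw away the contribution of $B_j^c$:
\[
\mathcal F^I_{h_j}(\v_j)+\mathcal L(\v_j)\ \ge\ \frac1{h_j^2}\int_{B_j}\mathcal W(x,\mathbf I+h_j\nabla\v_j)\,dx.
\]
On $B_j$ one has $|h_j\nabla\v_j|\le \sqrt{h_j}(2|\mathbf W|+1)\to 0$, so $\mathbf I+h_j\nabla\v_j$ eventually lies in the neighborhood $\mathcal U$ of $SO(3)$ from \eqref{reg}, and the expansion \eqref{regW} gives
\[
\frac1{h_j^2}\mathcal W(x,\mathbf I+h_j\nabla\v_j)\ \ge\ \frac12\,\mathbb E(\v_j)\,D^2\mathcal W(x,\mathbf I)\,\mathbb E(\v_j)-\omega\big(h_j|\nabla\v_j|\big)|\nabla\v_j|^2 \quad\text{on }B_j.
\]
The error term $\int_{B_j}\omega(h_j|\nabla\v_j|)|\nabla\v_j|^2\,dx$ vanishes: on $B_j$, $h_j|\nabla\v_j|\le \sqrt{h_j}(2|\mathbf W|+1)$, so $\omega(h_j|\nabla\v_j|)\le \omega(\sqrt{h_j}(2|\mathbf W|+1))\to 0$ uniformly, while $\int_{B_j}|\nabla\v_j|^2\,dx$ is bounded — indeed $\int_{B_j}|\mathbb E(\v_j)|^2\,dx\le C$ was shown inside the proof of Lemma \ref{Bj}, and the skew part is controlled since $\sqrt{h_j}\nabla\v_j\to\mathbf W$ forces $\int_{B_j}|\nabla\v_j|^2\le h_j^{-1}(2|\mathbf W|+1)^2|\Omega|$; a cleaner route is to bound the quadratic term directly via $|D^2\mathcal W(x,\mathbf I)|\le K$ and then only use the bound on $\mathbf 1_{B_j}\mathbb E(\v_j)$ in $L^2$. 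Then, writing $\xi_j:=\mathbf 1_{B_j}\mathbb E(\v_j)\wconv\mathbb E(\v)$ weakly in $L^2$, and noting that $\mathbf B\mapsto \tfrac12\mathbf B\,D^2\mathcal W(x,\mathbf I)\,\mathbf B$ is, by \eqref{ellipticity}, a nonnegative quadratic form (coercive on trace-free matrices), lower semicontinuity of convex integral functionals with respect to weak $L^2$ convergence yields
\[
\liminf_{j\to\infty}\frac12\int_{B_j}\mathbb E(\v_j)\,D^2\mathcal W(x,\mathbf I)\,\mathbb E(\v_j)\,dx\ \ge\ \frac12\int_\Omega \mathbb E(\v)\,D^2\mathcal W(x,\mathbf I)\,\mathbb E(\v)\,dx.
\]

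\textbf{Identifying the limit as $\mathcal F^I$.} The right-hand side above is $\int_\Omega \mathcal Q^I(x,\mathbb E(\v))\,dx$ only if $\Tr\mathbb E(\v)=0$; in general it is just the quadratic form without the trace constraint. To land on $\mathcal F^I(\v)$ I would use the matrix $\mathbf W$ from Lemma \ref{sqrth}. Since $\sqrt{h_j}\nabla\v_j\to\mathbf W$ in $L^p$, expanding the determinant constraint $\det(\mathbf I+h_j\nabla\v_j)=1$ — which holds wherever $\mathcal W^I(x,\mathbf I+h_j\nabla\v_j)<+\infty$, in particular on any set of finite energy — to second order gives $h_j\dv\v_j+\tfrac{h_j^2}{2}\big((\Tr\nabla\v_j)^2-\Tr((\nabla\v_j)^2)\big)+\dots=0$; dividing by $h_j^2$ and passing to the limit shows $\dv\v = \tfrac12\Tr\mathbf W^2 \le 0$ and, more to the point, $\Tr\mathbb E(\v)=\Tr(\mathbb E(\v)-\tfrac12\mathbf W^2)+\tfrac12\Tr\mathbf W^2$ with $\Tr(\mathbb E(\v)-\tfrac12\mathbf W^2)=0$. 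Hence replacing $\mathbb E(\v_j)$ on $B_j$ by $\mathbb E(\v_j)-\tfrac12(1-\cos\theta_j)h_j^{-1}\mathbf W_j^2$ (the symmetric part of $\mathbf I+h_j\nabla\v_j-\mathbf R_j$, divided by $h_j$, which is trace-free up to $o(1)$ and converges weakly in $L^2$ to $\mathbb E(\v)-\tfrac12\mathbf W^2$) inside the Taylor expansion, and then applying weak-$L^2$ lower semicontinuity of $\int_\Omega\mathcal Q^I(x,\cdot)\,dx$ (legitimate since on trace-free matrices $\mathcal Q^I$ is a genuine nonnegative quadratic form, extended by $+\infty$ off the trace-free subspace), produces
\[
\liminf_{j\to\infty}\big(\mathcal F^I_{h_j}(\v_j)+\mathcal L(\v_j)\big)\ \ge\ \int_\Omega\mathcal Q^I\Big(x,\mathbb E(\v)-\tfrac12\mathbf W^2\Big)\,dx\ \ge\ \inf_{\mathbf W'\in\mathbb R^{3\times3}_{\rm skew}}\int_\Omega\mathcal Q^I\Big(x,\mathbb E(\v)-\tfrac12(\mathbf W')^2\Big)\,dx,
\]
and subtracting the convergent linear term gives \eqref{liminf}. \textbf{The main obstacle} is precisely this last identification: one must bookkeep the skew-symmetric ``rotation'' part $\mathbf W_j$ carefully — it is $O(h_j^{-1/2})$ in the gradient but its contribution to the symmetric part, $(1-\cos\theta_j)h_j^{-1}\mathbf W_j^2$, is $O(1)$ and must be extracted and shown to converge (along a subsequence) to $\tfrac12\mathbf W^2$ — and simultaneously control the cut-off $\mathbf 1_{B_j}$ so that the trace-free correction and the weak-$L^2$ limit are compatible; the interplay of the truncation $B_j$, the $g_p$-coercivity, and the $x$-independent modulus of continuity $\omega$ is where all the care is needed.
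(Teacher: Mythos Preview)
Your scaffolding is right: discard the trivial case, invoke Lemmas \ref{sqrth} and \ref{Bj}, restrict to $B_j$, handle $\mathcal L$ via \eqref{globalequi}, and use the determinant expansion to see $2\,\mathrm{div}\,\v=\Tr\mathbf W^2$. But the core step has a genuine gap.

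The remainder in \eqref{regW} is $\omega(h_j|\nabla\v_j|)\,|\nabla\v_j|^2$, with the \emph{full} gradient, not just the symmetric part. You yourself observe that on $B_j$ one only has $\int_{B_j}|\nabla\v_j|^2\,dx\le h_j^{-1}(2|\mathbf W|+1)^2|\Omega|$; this is not a bound, it blows up whenever $\mathbf W\neq\mathbf 0$ (and generically it does, since $\sqrt{h_j}\,\mathrm{skew}\,\nabla\v_j\to\mathbf W$). Since only $\omega(t)\to 0$ is assumed, with no rate, the product $\omega(\sqrt{h_j}\,C)\cdot h_j^{-1}$ need not vanish, and the error term is out of control. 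Your ``cleaner route'' concerns the \emph{leading} quadratic term (which indeed only sees $\mathbb E(\v_j)$), not the remainder, so it does not help here. For the same reason your first liminf lands on $\tfrac12\int_\Omega\mathbb E(\v)\,D^2\mathcal W(x,\mathbf I)\,\mathbb E(\v)\,dx$, which is \emph{not} comparable to $\mathcal F^I(\v)+\mathcal L(\v)$ when $\mathbf W\neq\mathbf 0$.

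The paper fixes both issues in one stroke by exploiting frame indifference through $\mathcal V$: write $\mathcal W(x,\mathbf I+h_j\nabla\v_j)=\mathcal V(x,h_j\mathbf D_j)$ with the (symmetric) Green--Saint-Venant strain $\mathbf D_j:=\mathbb E(\v_j)+\tfrac12 h_j\nabla\v_j^T\nabla\v_j$, and expand via \eqref{regV}. Now the remainder is $\eta(h_j|\mathbf D_j|)\,|\mathbf D_j|^2$; on $B_j$ one has $h_j|\mathbf D_j|\le\sqrt{h_j}(2|\mathbf W|+2)\to 0$ \emph{and} $\mathbf 1_{B_j}\mathbf D_j$ is genuinely bounded in $L^2$ (because $\mathbf 1_{B_j}\mathbb E(\v_j)$ is bounded in $L^2$ by Lemma \ref{Bj} and $\mathbf 1_{B_j}h_j\nabla\v_j^T\nabla\v_j$ is bounded in $L^\infty$), so the remainder really goes to zero. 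At the same time $\mathbf 1_{B_j}h_j\nabla\v_j^T\nabla\v_j\to -\mathbf W^2$ a.e.\ with a uniform $L^\infty$ bound on $B_j$, hence $\mathbf 1_{B_j}\mathbf D_j\rightharpoonup\mathbb E(\v)-\tfrac12\mathbf W^2$ weakly in $L^2$, and weak lower semicontinuity of the quadratic form delivers directly
\[
\liminf_{j\to\infty}\big(\mathcal F^I_{h_j}(\v_j)+\mathcal L(\v_j-\mathbb P\v_j)\big)\ \ge\ \tfrac12\int_\Omega\big(\mathbb E(\v)-\tfrac12\mathbf W^2\big)\,D^2\mathcal W(x,\mathbf I)\,\big(\mathbb E(\v)-\tfrac12\mathbf W^2\big)\,dx,
\]
which, combined with $\Tr(\mathbb E(\v)-\tfrac12\mathbf W^2)=0$, equals $\int_\Omega\mathcal Q^I(x,\mathbb E(\v)-\tfrac12\mathbf W^2)\,dx\ge \mathcal F^I(\v)+\mathcal L(\v)$. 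Your attempted fix --- inserting $\mathbb E(\v_j)-\tfrac12(1-\cos\theta_j)h_j^{-1}\mathbf W_j^2$ ``inside the Taylor expansion'' --- is pointing in this direction, but without passing to $\mathcal V$ there is no expansion whose leading term is quadratic in a symmetric quantity and whose remainder is controlled by the $L^2$ norm of that same quantity.
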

\begin{proof} We may assume wlog that ${\mathcal F^I_{h_j}}(\v_j)\le C$ for any $j\in\mathbb N$ so that by  setting $$\textstyle\mathbf D_{j}:=\mathbb E(\v_{j})+\frac{1}{2}h_{j}\nabla\v_{j}^{T}\nabla\v_{j}$$ we get  
\begin{equation*}\begin{aligned}
1&=\det(\mathbf I+h_{j}\nabla\v_{j})=\det(\mathbf I+h_{j}\nabla\v_{j}^T)(\mathbf I+h_{j}\nabla\v_{j})=\det(\mathbf I+2h_{j}\mathbb E(\v_{j})+h_{j}^{2}\nabla\v_{j}^{T}\nabla\v_{j})\\
&= 1+2h_{j}\mathrm{Tr} \mathbf D_{j}-{2}h_{j}^{2}(\mathrm{Tr}(\mathbf D_{j}^{2})-(Tr \mathbf D_{j})^{2})+8h_{j}^{3}\det \mathbf D_{j}
\end{aligned}
\end{equation*}
a.e. in $\om$, that is, 
\begin{equation*}\lab{TrB}
2\dv \v_{j}+h_{j}|\nabla\v_{j}|^{2}=
2Tr \mathbf D_{j}={2}h_{j}(Tr(\mathbf D_{j}^{2})-(Tr \mathbf D_{j})^{2})-8h_{j}^{2}\det \mathbf D_{j}.
\end{equation*}
By taking into account Lemma \eqref{sqrth} we get $\sqrt h_j\nabla\v_j\to \mathbf W$ in $L^p$ hence, up to subsequences, $h_{j}\nabla\v_{j}^T\nabla\v_j\to -\mathbf W^2$ a.e. in $\om$ and ${2}h_{j}(Tr(\mathbf D_{j}^{2})-(Tr \mathbf D_{j})^{2})-8h_{j}^{2}\det \mathbf D_{j}\to 0$ a.e. in $\om$. \KKK
Therefore $2\dv\v_j\to \Tr\mathbf W^2$ a.e. in $\om$ and since the weak convergence of $\mathbb E(\v_j)$ implies $\dv \v_{j}\wconv \dv \v$ weakly in $L^p(\Omega)$ we get $2\dv\v=\Tr\mathbf W^2$ a.e. in $\om$.
On the other hand by Lemma \ref{Bj}, with $B_j$ defined by \eqref{lab}, we have   ${\mathbf 1}_{B_j}\mathbb E(\v_j)\wconv \mathbb E(\v)$ weakly in $L^2(\Omega,\mathbb R^{3\times3})$ and $\v\in H^1(\Omega,\mathbb R^3)$.  Hence, by \eqref{regV}, \eqref{vi} and \eqref{globalequi} we get for large enough $j$
\begin{equation*}\begin{aligned}
\mathcal F_{h_j}^I(\v_j)&\ge \frac1{h_j^2}\int_{B_j}\mathcal W(x, \mathbf I+h_j\nabla\v_j)\,dx-\mathcal L(\v_j-\mathbb P\v_j)=\frac1{h_j^2}\int_{B_j}\mathcal V(x,h_j \mathbf D_j)\,dx-\mathcal L(\v_j-\mathbb P\v_j)\\
&\displaystyle\ge \int_{B_j}\frac12 \mathbf D_j^T\,D^2\mathcal V(x,\mathbf 0)\,\mathbf D_j\,dx-\int_{B_j} \eta(h_j\mathbf D_j)|\mathbf D_j|^2\,dx-\mathcal L(\v_j-\mathbb P\v_j),
\end{aligned}
\end{equation*}
since on $B_j$ we have $h_j|\mathbf D_j|\le \sqrt{h_j}\left(\sqrt{h_j}|\nabla\v_j|+\tfrac12h_j^{3/2}\nabla \v_j^T\nabla \v_j\right)\le \sqrt{h_j}(2|\mathbf W|+2)$ for large enough $j$. We deduce
\begin{equation}\begin{aligned}\lab{lowerbd}
\mathcal F_{h_j}^I(\v_j)
\displaystyle&\ge \frac12\int_\om ({\mathbf 1}_{B_j}\mathbf D_j)^T\,D^2\mathcal W(x,\mathbf I)\,(\mathbf 1_{B_j}\mathbf D_j)\,dx\\&\qquad- \eta(\sqrt h_j(2|\mathbf W|+1)\int_\om|{\mathbf 1}_{B_j}\mathbf D_j|^2\,dx-\mathcal L(\v_j-\mathbb P\v_j)
\end{aligned}
\end{equation}
for large enough $j$, as $\eta$ is increasing. \KKK
Since $h_{j}\nabla\v_{j}^T\nabla\v_j\to -\mathbf W^2$ a.e. in $\om$ and $|B_j^c|\to 0$ as $j\to+\infty$, and since $|{\mathbf 1}_{B_j}h_{j}\nabla\v_{j}^T\nabla\v_j| \le (2|\mathbf W|+1)^2\KKK$, we get ${\mathbf 1}_{B_j}h_{j}\nabla\v_{j}^T\nabla\v_j\wconv -\mathbf W^2$ weakly in $L^2(\Omega,\mathbf R^{3\times3})$. By  taking into account that ${\mathbf 1}_{B_j}\mathbb E(\v_j)\wconv \mathbb E(\v)$ weakly in $L^2(\Omega,\mathbf R^{3\times3})$, we then obtain
$\textstyle{\mathbf 1}_{B_j}\mathbf D_j\wconv \mathbb E(\v)-\frac{1}{2}\mathbf W^2$ weakly in $L^2(\Omega,\mathbb R^{3\times 3})$. 
Hence, by \eqref{reg}, \eqref{lowerbd} and by the weak $L^2(\Omega,\mathbb R^{3\times3})$ lower semicontinuity of the map $\mathbf F\mapsto\int_\Omega \mathbf F^T\, D^2\mathcal W(x,\mathbf I)\,\mathbf F\,dx$, we deduce
\begin{equation}\label{pi}
\displaystyle\liminf_{j\to +\infty}\mathcal F_{h_j}^I(\v_j)\ge \frac12\int_\om \left(\mathbb E(\v)-\frac{1}{2}\mathbf W^2\right)\,D^2\mathcal W(x,\mathbf I)\,\left(\mathbb E(\v)-\frac{1}{2}\mathbf W^2\right)\,dx-\mathcal L(\v).
\end{equation}
In order to obtain \eqref{pi}, we have also used the fact that $\mathcal L(\v_j-\mathbb P\v_j)\to\mathcal L(\v)$ as $j\to+\infty$. Indeed, by \eqref{secondkorn} and \eqref{kornpoi}  we obtain boundedness of $\v_j-\mathbb P\v_j$ in $W^{1,p}(\Omega,\mathbb R^3)$ and in $L^{\frac{3p}{3-p}}(\Omega,\mathbb R^3)$. Therefore up to subsequences we get $\v_j-\mathbb P\v_j\wconv\mathbf w$ weakly in $L^{\frac{3p}{3-p}}(\Omega,\mathbb R^3)$ for some $\mathbf w\in W^{1,p}(\Omega,\mathbb R^3)$, and by trace embedding $\v_j-\mathbb P\v_j\wconv \w$ weakly in $L^{\frac{2p}{3-p}}(\partial\Omega,\mathbb R^3)$ as well. Moreover, since  $\mathbb E(\v_j-\mathbb P\v_j)\wconv\mathbb E(\v)$ weakly in $L^{p}(\Omega,\mathbb R^{3\times3})$, we deduce $\mathbb E(\mathbf w)=\mathbb E(\v)$, thus \eqref{globalequi} implies $\mathcal L(\mathbf w)=\mathcal L(\v)$. Therefore, we have $\mathcal L(\v_j-\mathbb P\v_j)\to\mathcal L(\mathbf w)=\mathcal L(\v)$ as $j\to+\infty$. 
   
Eventually, since $2\dv\v=\Tr\mathbf W^2$ a.e. in $\om$, the result follows from \eqref{pi}.
\end{proof}

\section{Upper bound}

The following result is an extension of \cite[Lemma 4.1]{MP}.
\begin{lemma}\label{newflowlemma}
Let $\Omega$ satisfy assumption \eqref{OMEGA}.   Let $\v\in W^{2,\infty}(\Omega',\R^3)$ be such that $\mathrm{div}\, \v=0$  in $\om'$, where $\Omega'\subset\mathbb R^3$ is an open set such that $\overline\Omega\subset\Omega'$.
Let  $(h_j)_{j\in\mathbb N}\subset(0,1)$ be a vanishing sequence. There exists a sequence
 of vector fields $(\v_j)_{j\in\mathbb N}\subset W^{2,\infty}(\Omega,\mathbb R^3)$ and $j_*\in\mathbb N$ such that for any $j>j_*$
\begin{equation}
\det(\mathbf I+h_j\nabla \v_j)=1,\label{flux1}
\end{equation}
\begin{equation}
 \sup_{x\in\Omega}|\v_{j}(x)- \v(x) |\le \|\v\|_{L^\infty(\Omega)}\,\mathfrak q(h_j\|\v\|_{W^{1,\infty}(\Omega)}),\label{flux2}
\end{equation}
\begin{equation}
\sup_{x\in\Omega}|h_j\nabla\v_j(x)|\le \mathfrak q(h_j\|\v\|_{W^{1,\infty}(\Omega)}),\label{flux3}
\end{equation}
\begin{equation}
\sup_{x\in\Omega}|\nabla\v_j(x)-\nabla \v(x)|
\le \left(1+e^{h_j\|\v\|_{W^{1,\infty}(\Omega')}}\right)\,\|\v\|_{W^{2,\infty}(\Omega')}\,\mathfrak q(h_j\|\v\|_{W^{1,\infty}(\Omega')}),\label{flux4}
\end{equation}
where $\mathfrak q(z):=ze^z$. In particular, $\v_j\to \v$ in $W^{1,\infty}(\Omega,\mathbb R^3)$ as $j\to+\infty$.
%Let  $\mathbf y(\cdot,x)\in\mathbb R^3$ denote, for any $x\in\om$,     the unique solution to
%We first consider the flow $\bf y$ associated to the vector field $\v$, i. e., for any $\mathbf x\in\om_*$ we consider the unique solution to 
%the Cauchy problem
%\beeq \lab{flowbiss}\left\{\begin{array}{ll} &\displaystyle\frac{\partial  {\bf y}}{\partial t}(t,x)=\v({\bf y}(t,x))\qquad t\in(0,T]\\
%&\\
%& {\bf y}(0,x)=x.
%\end{array}\right.
%\eneq
%Then, if $T$ is chosen small enough, the following estimates hold:
\end{lemma}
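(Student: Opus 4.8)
The plan is to construct $\v_j$ as a rescaled flow map associated with $\v$, following the heuristic described in the introduction. Specifically, since $\dv\v=0$ in $\Omega'$ and $\v\in W^{2,\infty}(\Omega',\R^3)$, the ODE $\dot{\mathbf y}(t) = \v(\mathbf y(t))$, $\mathbf y(0)=x$, has a unique solution $\mathbf y(t,x)$ for $x\in\Omega$ and $|t|$ small, by Picard--Lindel\"of; the Lipschitz bound $\|\v\|_{W^{1,\infty}(\Omega')}$ controls the time of existence so that for $j$ large (say $h_j\|\v\|_{W^{1,\infty}(\Omega')}$ below a fixed threshold) the flow $\mathbf y(h_j,\cdot)$ is well-defined on all of $\Omega$ and stays inside $\Omega'$. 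Then I would set
\[
\v_j(x) := \frac{\mathbf y(h_j,x)-x}{h_j}.
\]
By the Liouville/Reynolds transport identity, $\partial_t \det\nabla_x\mathbf y(t,x) = (\dv\v)(\mathbf y(t,x))\,\det\nabla_x\mathbf y(t,x) = 0$, so $\det\nabla_x\mathbf y(h_j,x)=\det\nabla_x\mathbf y(0,x)=1$, and since $\mathbf I+h_j\nabla\v_j = \nabla_x\mathbf y(h_j,x)$ this gives \eqref{flux1}. Regularity $\v_j\in W^{2,\infty}(\Omega,\R^3)$ follows from $\v\in W^{2,\infty}$ and smooth dependence of the flow on initial data.

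The quantitative estimates \eqref{flux2}--\eqref{flux4} are then Gr\"onwall-type bounds on the flow. For \eqref{flux2}, write $\mathbf y(h_j,x)-x = \int_0^{h_j}\v(\mathbf y(s,x))\,ds$; a first crude bound gives $|\mathbf y(s,x)-x|\le s\|\v\|_{L^\infty}e^{s\|\v\|_{W^{1,\infty}}}$ via Gr\"onwall (using $|\v(\mathbf y(s,x))|\le \|\v\|_{L^\infty}$ is too lossy for the stated form, so instead estimate $|\v(\mathbf y(s,x))-\v(x)|\le \|\v\|_{W^{1,\infty}}|\mathbf y(s,x)-x|$ and iterate), leading to $|\v_j(x)-\v(x)| = h_j^{-1}|\int_0^{h_j}(\v(\mathbf y(s,x))-\v(x))\,ds| \le \|\v\|_{L^\infty}\,\mathfrak q(h_j\|\v\|_{W^{1,\infty}})$ after the elementary manipulation $(e^z-1)\le \mathfrak q(z) = ze^z$ and bookkeeping. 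For \eqref{flux3}, note $h_j\nabla\v_j = \nabla_x\mathbf y(h_j,x)-\mathbf I$, and differentiating the flow equation gives $\partial_t\nabla_x\mathbf y = \nabla\v(\mathbf y)\,\nabla_x\mathbf y$, so $|\nabla_x\mathbf y(t,x)-\mathbf I|\le e^{t\|\v\|_{W^{1,\infty}}}-1\le \mathfrak q(t\|\v\|_{W^{1,\infty}})$ by Gr\"onwall again. For \eqref{flux4}, subtract: $\nabla\v_j(x)-\nabla\v(x) = h_j^{-1}(\nabla_x\mathbf y(h_j,x)-\mathbf I) - \nabla\v(x)$, and compare $\partial_t\nabla_x\mathbf y = \nabla\v(\mathbf y)\nabla_x\mathbf y$ with its value at $t=0$, using $|\nabla\v(\mathbf y(s,x))-\nabla\v(x)|\le \|\v\|_{W^{2,\infty}}|\mathbf y(s,x)-x|$ together with \eqref{flux2} and \eqref{flux3}; integrating in $s$ and dividing by $h_j$ produces the factor $(1+e^{h_j\|\v\|_{W^{1,\infty}(\Omega')}})\|\v\|_{W^{2,\infty}(\Omega')}\,\mathfrak q(h_j\|\v\|_{W^{1,\infty}(\Omega')})$.

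The final assertion $\v_j\to\v$ in $W^{1,\infty}(\Omega,\R^3)$ is then immediate from \eqref{flux2} and \eqref{flux4}, since $\mathfrak q(z)\to 0$ as $z\to0^+$ and $h_j\to0$. The main obstacle I anticipate is purely bookkeeping: getting the constants in \eqref{flux2}--\eqref{flux4} to come out exactly in the stated form with $\mathfrak q(z)=ze^z$, rather than with some other harmless but differently-shaped function of $h_j\|\v\|_{W^{1,\infty}}$. This requires being careful to always estimate increments $\v(\mathbf y(s,x))-\v(x)$ and $\nabla\v(\mathbf y(s,x))-\nabla\v(x)$ (exploiting the Lipschitz and $W^{2,\infty}$ regularity) rather than the functions themselves, and to apply Gr\"onwall in the sharp integral form. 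A minor secondary point is ensuring the flow does not exit $\Omega'$ before time $h_j$ — this is why $\v$ is assumed divergence-free and $W^{2,\infty}$ on the slightly larger set $\Omega'\supset\overline\Omega$, and it is handled by choosing $j_*$ so that $h_j\|\v\|_{W^{1,\infty}(\Omega')}$ is small enough that the displacement bound \eqref{flux2} keeps $\mathbf y(s,x)$ within $\mathrm{dist}(\overline\Omega,\partial\Omega')$ of $\Omega$.
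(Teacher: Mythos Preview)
Your proposal is correct and follows essentially the same approach as the paper: define $\v_j(x)=h_j^{-1}(\mathbf y(h_j,x)-x)$ via the flow of $\v$, obtain \eqref{flux1} from Liouville's formula for $\det\nabla_x\mathbf y$, and derive \eqref{flux2}--\eqref{flux4} by Gr\"onwall estimates on the integral identities for $\mathbf y(t,x)-x$ and $\nabla_x\mathbf y(t,x)-\mathbf I$, with the same splitting you describe for \eqref{flux4}. The paper also handles the constraint that the flow remain in $\Omega'$ exactly as you anticipate, by taking $j_*$ large enough.
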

\begin{proof}
We choose $T\in(0,1)$ small enough, such that $\mathbf y(t,x)\in\om'$ for any $x\in\om$ and any $t\in[0,T]$, where $\mathbf y(\cdot,x)$ is the unique solution to
%We first consider the flow $\bf y$ associated to the vector field $\v$, i. e., for any $\mathbf x\in\om_*$ we consider the unique solution to 
\beeq \lab{flowbis}\left\{\begin{array}{ll} &\displaystyle\frac{\partial  {\bf y}}{\partial t}(t,x)=\v({\bf y}(t,x)),\qquad t\in(0,T]\\
&\\
& {\bf y}(0,x)=x,
\end{array}\right.
\eneq
so that $\mathbf y$ is  
 the flow associated to the vector field $\v$. 
We have
$\mathbf y\in C^1([0,T]; W^{2,\infty}(\om))$, see \cite[Corollary 5.2.8, Remark 5.2.9]{HP}.
From \eqref{flowbis}, we have 
\begin{equation}\label{diffbis}
\frac1t\left(\mathbf y(t,x)-x\right)-\mathbf v(x)=\frac1t\int_0^t(\mathbf v(\mathbf y(s,x))-\mathbf v(x))\,ds
\end{equation}
for any $x\in\Omega$. We get therefore the basic estimate
\[
\frac1t|\mathbf y(t,x)-x|\le |\v(x)|+ \|\mathbf v\|_{W^{1,\infty}(\Omega')}\int_0^t\frac1s|\mathbf y(s,x)-x|\,ds
\]
for any $x\in\Omega$,
and Gronwall lemma entails 
\begin{equation}\label{gronw}
\frac1t|\mathbf y(t,x)-x|\le |\mathbf v(x)| \exp\{\|\mathbf v\|_{W^{1,\infty}(\Omega')}t\},
\end{equation}
so that we have 
\begin{equation}\label{supomega}
\sup_{x\in\Omega}|\mathbf y(t,x)-x|\le \mathfrak q (t\|\mathbf v\|_{W^{1,\infty}(\Omega')}).
\end{equation}
%where $\mathfrak q(x):=xe^x$.
% In particular, we may choose $T$ small enough such that $\mathfrak q(t\|\v\|_{W^{1,+\infty}(\Omega)})\le d(\Omega,\partial\Omega')$. In this way we get $y_t(\Omega)\subset\Omega'$ for any $t\in[0,T]$. From now on, $T$ satisfies this constraint.

We have $\nabla \mathbf y\in C^1([0,T]; W^{1,\infty}(\om))$, where $\nabla$ denotes the gradient in  $x$, and as shown in the proof of \cite[Lemma 4.1]{MP}, there hold
\begin{equation}\label{QZ}
\nabla\mathbf y(t,x)=\exp\left (\int_0^t\nabla\v(\mathbf y(s,x))\,ds\right )
\end{equation}
 and then
 \begin{equation}\lab{detbis}
\det\nabla\mathbf y (t,x)=\exp\left (\int_0^t\tr\nabla\v(\mathbf y(s,x))\,ds\right )=\exp\left (\int_0^t\dv\v(\mathbf y(s,x))\,ds\right )=1
\end{equation}
for every $t\in (0,T]$ and for every $x\in\om$. 

%We let $\mathbf Z(t,x):=\nabla \mathbf y(t,x)$, where $\nabla$ denote the derivative in the $x$ variable, so that
%\beeq \lab{flow2bis}\left\{\begin{array}{ll} &\displaystyle\frac{\partial  { \mathbf Z}}{\partial t}(t,x)=\nabla \v({\bf y}(t,x))\mathbf Z(t,x),\qquad t\in(0,T],\\
%&\\
%& {\mathbf Z}(0,x)=\mathbf I,
%\end{array}\right.
%\eneq
%for every $x\in\om$,  hence $\mathbf Z\in C^1([0,T]; W^{1,\infty}(\om))$ and
%\begin{equation}\label{QZ}
%\mathbf Z(t,x)=\exp\left (\int_0^t\nabla\v(\mathbf y(s,x))\,ds\right )
%\end{equation}
% for every $t\in [0,T]$ and for every $x\in\om$, thus
% \begin{equation}\lab{detbis}
%\det\mathbf Z(t,x)=\exp\left (\int_0^t\tr\nabla\v(\mathbf y(s,x))\,ds\right )=\exp\left (\int_0^t\dv\v(\mathbf y(s,x))\,ds\right )=1
%\end{equation}
%for every $t\in (0,T]$ and for every $x\in\om$.

 %Indeed, $\mathbf y(s,x)\in \Omega'$ for any $s<T$, so that $\dv\v(\mathbf y(s,x))=0$ for any $x$ in  $\Omega$ and any $s<t\le T$.

We define $$  \v_{t}(x):= t^{-1}({\bf y}(t,x)-x),\qquad x\in\om,\; t\in(0,T].$$
%so that  $\lim_{t\to 0^+}\v_t(x)=\v(x)$.{\color{blue} questa riga la toglierei da qui e la metterei dopo la (6.11)}
From the definition of $\mathbf v_t$, from \eqref{diffbis} and \eqref{gronw} we get
\begin{equation}\label{ellinf}\begin{aligned}
\displaystyle\left |\v_{t}(x)- \v(x)\right |&=\left|\frac1{t}(\mathbf y(t,x)-x)-\mathbf v(x)\right|	\le  \|  \v\|_{W^{1,\infty}(\Omega)}\, \int_{0}^{t}\frac1s\,\left |{\bf y}(s,x)-x\right |\,ds\\&\le \|\v\|_{L^\infty(\Omega)}\mathfrak q(t\|\v\|_{W^{1,\infty}(\Omega)})
\end{aligned}\end{equation}
for any $x\in\Omega$ and any $t\in(0,T]$. From the latter we get in particular the convergence of $\v_t$ to $\v$ in $L^1\cap L^\infty(\Omega)$ as $t\to0$.

%We take now the gradient in \eqref{diffbis}, and
 Since the map $\Omega\ni x\mapsto\v(\mathbf y(t,x))$ is Lipschitz continuous, uniformly with respect to  $t\in(0,T)$, 
we may take the gradient under integral sign in \eqref{diffbis} and obtain
\begin{equation}\label{gradbis}
\begin{aligned}
\frac1t (\nabla \mathbf y(t,x)-\mathbf I)-\nabla\mathbf v(x)&=\frac1t\int_0^t\left(\nabla[\mathbf v(\mathbf y(s,x))]-\nabla \mathbf v(x)\right)\,ds\\
&=\frac1t \int_0^t\left(\nabla\mathbf v(\mathbf y(s,x))\nabla \mathbf y(s,x)-\nabla \mathbf v(x)\nabla\mathbf y(s,x)\right)\,ds\\&\qquad\qquad+
\frac1t\int_0^t\left(\nabla \mathbf v(x)\nabla\mathbf y(s,x) -\nabla \mathbf v(x)\right)\,ds
\end{aligned}
\end{equation}
for every $x\in\Omega$ and every $t\in(0,T]$.  Form the first equality of \eqref{gradbis} and from \eqref{QZ} we get  
\begin{equation*}\label{ad4}\begin{aligned}
\frac1t |\nabla \mathbf y(t,x)-\mathbf I|&\le\frac 1t \int_0^t|\nabla \v(\mathbf y(s,x))|\,|\nabla\mathbf y(s,x)|\,ds\\&\le \|\v\|_{W^{1,\infty}(\om')} \,\frac1t\,\int_0^t |\nabla\mathbf y(s,x)|\,ds\le \|\v\|_{W^{1,\infty}(\om')} \frac1t\,\int_0^t\exp\{s\|\v\|_{W^{1,\infty}(\Omega')}\}\,ds\\
&\le \|\v\|_{W^{1,\infty}(\Omega')}\,\exp\{t\|\v\|_{W^{1,\infty}(\Omega')}\},
\end{aligned}\end{equation*}
%therefore
%\[
%\sup_{x\in\Omega}\|\nabla \mathbf y(t,x)-\mathbf I\|_{L^\infty(\om)}\le \mathfrak q(t\|\v\|_{W^{1,\infty}(\Omega')})
%\]
%for any $t\in(0,T]$, where $\mathcal Q:=\|\nabla \mathbf y\|_{C^0([0,1]; L^\infty(\om_*))}<+\infty$ recalling \eqref{flow2}.
%
%
%\begin{equation}\label{ad4}\begin{aligned}
%\frac1t \|\nabla \mathbf y(t,\cdot)-\mathbf I\|_{L^\infty(\om)}&=\frac 1t\left\|\int_0^t\nabla \v(\mathbf y(s,\cdot))\nabla\mathbf y(s,\cdot)\,ds\,\right\|_{L^\infty(\om)}\\&\le \|\v\|_{W^{1,\infty}(\om')} \sup_{t\in[0,T]}\| \mathbf Z(t,\cdot)\|_{L^\infty(\om)}\le \|\v\|_{W^{1,\infty}(\om')} \exp\{t\|\v\|_{W^{1,\infty}(\Omega')}\},
%\end{aligned}\end{equation}
therefore
\begin{equation}\label{lastflow}
\sup_{x\in\Omega}|\nabla \mathbf y(t,x)-\mathbf I|\le \mathfrak q(t\|\v\|_{W^{1,\infty}(\Omega')})
\end{equation}
for any $t\in(0,T]$. Moreover, by \eqref{gradbis}, \eqref{supomega}, \eqref{lastflow} and \eqref{QZ} we have
\begin{equation*}
\begin{aligned}
&|\nabla\v_t(x)-\nabla \v(x)|=\left|\frac1{t}(\nabla\mathbf y(t,x)-\mathbf I)-\nabla \v(x)\right|
\\&\le \frac1t\int_0^{t}{|\nabla \mathbf y(s,x)|}\,|\nabla \v(\mathbf y(s,x))-\nabla \v(x)|\,ds+\| \v\|_{W^{1,\infty}(\om')}\,\frac1t\int_0^{t}{|\nabla  \mathbf y(s,x)-\mathbf I|}\,ds\\
&\le \|\v\|_{W^{2,\infty}(\Omega')}\,\frac1t\,\int_0^t |\nabla \mathbf y(s,x)|\,|\mathbf y(s,x)-x|\,ds+\|\v\|_{W^{1,\infty}(\Omega')}\,\mathfrak q(t\|\v\|_{W^{1,\infty}(\Omega')})\\
&  \le \|\v\|_{W^{2,\infty}(\Omega')}\,\frac1t\int_0^t\exp\{s \|\v\|_{W^{1,\infty}(\Omega')}\}\,\mathfrak q(s \|\v\|_{W^{1,\infty}(\Omega')})\,ds +\|\v\|_{W^{1,\infty}(\Omega')}\,\mathfrak q(t\|\v\|_{W^{1,\infty}(\Omega')})\\
&\le \left(e^{t\|\v\|_{W^{1,\infty}(\Omega')}}\right)\,\|\v\|_{W^{2,\infty}(\Omega')}\,\mathfrak q(t\|\v\|_{W^{1,\infty}(\Omega')})
\end{aligned}
\end{equation*}
for any $x\in\Omega$ and any $t\in(0,T]$. 

Eventually, let us consider a vanishing sequence $(h_j)_{j\in\mathbb N}\subset(0,1)$. By defining $j_*$ as the smallest positive integer such that $h_j<T$ for any $j>j_*$  and by defining $\v_j:=\v_{h_j}$, the result follows from \eqref{detbis}, \eqref{ellinf}, \eqref{lastflow} and from the latter estimate. 
\end{proof}

We next provide the approximation construction for the recovery sequence.

\begin{lemma}\label{Kato} Let $\om$ satisfy assumption \eqref{OMEGA}.
Let $(h_j)_{j\in\mathbb N}$ be a vanishing sequence.
Let $\v\in H^1_{\dv}(\Omega)$. There exists a sequence $(\v_j)_{j\in\mathbb N}\subset W^{2,\infty}(\om,\mathbb R^3)$ such that
\begin{itemize}
\item[i)]
 $\det(\mathbf I+h_j\nabla \v_j)=1$ for any $j\in\mathbb N$,
 \item[ii)]  $h_j\|\nabla\v_j\|_{L^\infty(\Omega)}\to 0$ as $j\to+\infty$,
  \item[iii)]  $\v_j\to\v$ strongly in $H^1(\Omega,\mathbb R^3)$ as $j\to+\infty$.
 \end{itemize}
\end{lemma}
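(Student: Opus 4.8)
The plan is to build the recovery sequence by approximating $\v$ in $H^1(\Omega)$ by smooth divergence-free fields that are defined on a full neighbourhood of $\overline\Omega$, applying Lemma \ref{newflowlemma} to each such field, and then extracting a diagonal subsequence.

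First I would prove the following density statement: for every $\delta>0$ there exist an open set $\Omega'_\delta\subset\mathbb R^3$ with $\overline\Omega\subset\Omega'_\delta$ and a field $\v^\delta\in W^{2,\infty}(\Omega'_\delta,\mathbb R^3)$ with $\dv\,\v^\delta=0$ in $\Omega'_\delta$ and $\|\v^\delta-\v\|_{H^1(\Omega)}<\delta$. To construct $\v^\delta$ I would first extend $\v$ to a divergence-free $H^1$ field on a fixed neighbourhood $\Omega'$ of $\overline\Omega$, chosen so that $A:=\Omega'\setminus\overline\Omega$ is a finite disjoint union of Lipschitz domains, one around each connected component of $\partial\Omega$. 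Take any extension $\widetilde\v\in H^1(\Omega',\mathbb R^3)$ of $\v$ that vanishes near $\partial\Omega'$ (e.g.\ a Sobolev extension multiplied by a cutoff) and set $g:=\dv\widetilde\v\in L^2(\Omega')$; then $g=0$ in $\Omega$ and near $\partial\Omega'$. On each component of $A$ one can solve $\dv\w=g$ with $\w=0$ on $\partial\Omega$ and a suitable smooth boundary datum on $\partial\Omega'$ whose flux over that component matches $\int g$, by the classical solvability of the divergence equation with prescribed Dirichlet data on Lipschitz domains; this gives $\w\in H^1(A,\mathbb R^3)$. The field $V$ equal to $\v$ on $\Omega$ and to $\widetilde\v-\w$ on $A$ then lies in $H^1(\Omega',\mathbb R^3)$, is divergence-free in $\Omega'$ and extends $\v$ (the normal traces match across $\partial\Omega$). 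Mollifying, $V*\rho_\eps$ is smooth and still divergence-free on a slightly smaller open neighbourhood of $\overline\Omega$, and $V*\rho_\eps\to V$ in $H^1(\Omega)$ as $\eps\to0$ because $\overline\Omega$ is compactly contained in $\Omega'$; taking $\eps=\eps(\delta)$ small enough yields $\v^\delta$ and $\Omega'_\delta$.

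Then, for each fixed $\delta$, I would apply Lemma \ref{newflowlemma} with the field $\v^\delta$ and the vanishing sequence $(h_j)$, obtaining $\v^\delta_j\in W^{2,\infty}(\Omega,\mathbb R^3)$ such that $\det(\mathbf I+h_j\nabla\v^\delta_j)=1$ for all $j$ large (by \eqref{flux1}), $h_j\|\nabla\v^\delta_j\|_{L^\infty(\Omega)}\le\mathfrak q(h_j\|\v^\delta\|_{W^{1,\infty}(\Omega)})\to0$ as $j\to\infty$ (by \eqref{flux3}), and $\v^\delta_j\to\v^\delta$ in $W^{1,\infty}(\Omega,\mathbb R^3)$, hence in $H^1(\Omega,\mathbb R^3)$, as $j\to\infty$ (by \eqref{flux2} and \eqref{flux4}). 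Finally I would diagonalise: with $\delta=1/k$, pick a strictly increasing sequence $(j_k)$ such that for every $j\ge j_k$ one has $\det(\mathbf I+h_j\nabla\v^{1/k}_j)=1$, $\|\v^{1/k}_j-\v^{1/k}\|_{H^1(\Omega)}<1/k$ and $h_j\|\nabla\v^{1/k}_j\|_{L^\infty(\Omega)}<1/k$, and set $\v_j:=\mathbf 0$ for $j<j_1$ and $\v_j:=\v^{1/k}_j$ for $j_k\le j<j_{k+1}$. Property (i) then holds for every $j$ (trivially below $j_1$, since $\det\mathbf I=1$); for $j_k\le j<j_{k+1}$ the bound $h_j\|\nabla\v_j\|_{L^\infty(\Omega)}<1/k$ gives (ii), and $\|\v_j-\v\|_{H^1(\Omega)}\le\|\v^{1/k}_j-\v^{1/k}\|_{H^1(\Omega)}+\|\v^{1/k}-\v\|_{H^1(\Omega)}<2/k$ gives (iii), since $k\to\infty$ as $j\to\infty$.

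The routine parts are the mollification, the use of Lemma \ref{newflowlemma} and the diagonal extraction; the main obstacle is the divergence-free extension of an arbitrary field of $H^1_{\dv}(\Omega)$ to a neighbourhood of $\overline\Omega$. Here one must take the topology of $\Omega$ into account: the flux of $\v$ through each individual connected component of $\partial\Omega$ need not vanish, so in general $\v$ admits no divergence-free extension to a ball (or to any domain that ``fills a hole'' of $\Omega$), and one is forced to perform the extension on a thin collar neighbourhood of $\partial\Omega$ as above.
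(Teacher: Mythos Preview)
Your proof is correct and follows the same overall strategy as the paper's: extend $\v$ to a divergence-free $H^1$ field on a neighbourhood of $\overline\Omega$, mollify, apply Lemma~\ref{newflowlemma} to each mollified field, and diagonalise. The differences lie in the extension step and in the diagonalisation. For the extension, the paper invokes \cite[Corollary~3.2]{Kato}, which extends $\v$ to all of $\mathbb R^3$ minus one small ball inside each hole of $\Omega$, the extension being written as the divergence of an $H^2$ tensor field plus a finite combination of harmonic fields with point singularities at the centres of those balls; your collar construction is more self-contained, at the price of needing the solvability of $\dv\w=g$ with inhomogeneous Dirichlet data on each annular Lipschitz component (which, as you implicitly note, reduces to Bogovskii after subtracting a fixed smooth field carrying the required flux). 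For the diagonalisation, the paper makes the choice quantitative: it bounds $\|\v^{\eps_n}\|_{W^{k,\infty}}$ by $C\eps_n^{-3-k}$ via the mollifier and then takes $n(j)=\min\{g^{-1}(j),h_j^{-1/10}\}-1$, so that products like $h_j\|\v^{\eps_{n(j)}}\|_{W^{1,\infty}}\|\v^{\eps_{n(j)}}\|_{W^{2,\infty}}\lesssim h_j^{1/10}$ go to zero and the right-hand sides of \eqref{flux22}--\eqref{flux44} vanish directly; your soft diagonal extraction reaches the same conclusions without these explicit rates. Both approaches correctly account for the topological obstruction you flag in your final paragraph.
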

\begin{proof}
Assumption \eqref{OMEGA} implies that there are $m$ bounded open connected Lipschitz sets with connected boundary, denoted by $\Omega_*,\Omega_1,\ldots,\Omega_{m-1}$, such that $\Omega=\Omega_*\setminus(\overline\Omega_1\cup\ldots\cup\ldots \overline \Omega_{m-1})$.  We have $\Omega_*\equiv\Omega$ if $m=1$.
By  \cite[Corollary 3.2]{Kato}, we may extend $\v$ to a $H^1(\mathbb R^3\setminus \overline{P_\delta},\mathbb R^3)$ vector field, still denoted by $\v$, such that $\dv\v=0$ on $\mathbb R^3 \setminus\overline{P_\delta}$, where $P:=\{p_1,\ldots, p_{m-1}\}$, $p_i\in\Omega_i$ for any $i=1,\ldots, m-1$, and $P_\delta:=B_\delta(p_1)\cup\ldots\cup B_\delta(p_{m-1})$. We have $P=P_\delta
=\emptyset$ if $m=1$,  otherwise $\delta>0$ is  fixed and so small that $\overline{B_\delta(p_i)}\subset \Omega_i$ for any $i=1,\ldots, m-1$. We note that this extension can be constructed in the form
$
\v=\dv \mathbf V+\lambda_1\mathbf g_1+\ldots+\lambda_{m-1}\mathbf g_{m-1}
$,
where $\mathbf V\in H^2(\mathbb R^3,\mathbb R^{3\times 3})$,  $\lambda_i\in \mathbb R$ and $\mathbf g_i$ is a  harmonic field with singularity at $p_i$ for each $i=1,\ldots,m-1$.
 
 Let $\rho$ denote the standard unit symmetric mollifier on $\mathbb R^3$ and let $\rho_\eps(x):=\eps^{-3}\rho(x/\eps)$. We let $\v^\eps:=\v\ast\rho_\eps$ and we let $\Omega'$ be a bounded open set such that $\overline\Omega\subset\Omega'$. We may choose $\Omega'$ such that $\v^\eps\in C^{\infty}(\overline{\Omega'})$, such that $\overline\Omega\subset\Omega'\subset\subset\mathbb R^3\setminus \overline{P_\delta}$ and such that $\dv\v^\eps\equiv0$ in $\Omega'$ for any small enough $\eps$. 
 Therefore, we may fix a constant $c\in(0,1)$, only depending on $\delta$, such that $\eps_n:=c/n$ satisfies the above properties for any $n\in\mathbb N$.

 Given 
 a vanishing sequence $(h_j)_{j\in\mathbb N}$ of positive numbers and given $n\in\mathbb N$ we may define $(\v^{\eps_n}_j)_{j\in\mathbb N}$ to be the sequence from  Lemma \ref{newflowlemma}, constructed from the $W^{2,\infty}(\Omega',\mathbb R^3)$ divergence-free vector field $\v^{\eps_n}$: indeed, by applying Lemma \ref{newflowlemma} to $\v^{\eps_n}$, we construct   a sequence  of $W^{2,\infty}(\Omega)$ vector fields $(\v^{\eps_n}_j)_{j\in\mathbb N}$ and a strictly increasing diverging function $g:\mathbb N\to\mathbb N$ such that   \eqref{flux1}, \eqref{flux2}, \eqref{flux3}, \eqref{flux4} 
 are satisfied for any positive integers $j,n$ with $j>g(n)$, i.e.,
 \begin{equation}
\det(\mathbf I+h_j\nabla \v^{\eps_n}_j)=1,\label{flux11}
\end{equation}
\begin{equation}
 \sup_{x\in\Omega}|\v_{j}^{\eps_n}(x)- \v^{\eps_n}(x) |\le \|\v^{\eps_n}\|_{L^\infty(\Omega')}\,\mathfrak q(h_j\|\v^{\eps_n}\|_{W^{1,\infty}(\Omega')}),\label{flux22}
\end{equation}
\begin{equation}
\sup_{x\in\Omega}|h_j\nabla\v^{\eps_n}_j(x)|\le \mathfrak q(h_j\|\v^{\eps_n}\|_{W^{1,\infty}(\Omega')}),\label{flux33}
\end{equation}
\begin{equation}
\sup_{x\in\Omega}|\nabla\v^{\eps_n}_j(x)-\nabla \v^{\eps_n}(x)|
\le \left(1+e^{h_j\|\v^{\eps_n}\|_{W^{1,\infty}(\Omega')}}\right)\,\|\v^{\eps_n}\|_{W^{2,\infty}(\Omega')}\,\mathfrak q(h_j\|\v^{\eps_n}\|_{W^{1,\infty}(\Omega')}).\label{flux44}
\end{equation}
 %Notice that $\|\v^\eps \|_{W^{k,\infty}(\Omega)}\le \|\v \|_{L^1(\Omega)}\|\rho_\eps\|_{W^{k,\infty}(\Omega)}$, $k=0,1,2$. 
 Therefore,  by defining the diverging sequence $(n(j))_{j\in\mathbb N}\subset [0,+\infty)$ as $$n(j):=\min\{g^{-1}(j), h_j^{-1/10}\}-1,$$  we obtain that   \eqref{flux11}, \eqref{flux22}, \eqref{flux33}, \eqref{flux44} are satisfied, with $\eps_{n(j)}$ in place of $\eps_n$,  for any $j\in\mathbb N$, since $n(j)<g^{-1}(j)$.
 
 We let therefore $\v_j:=\v_j^{\eps_{n(j)}}$
 and conclude by checking that the  sequence $(\v_j)_{j\in\mathbb N}$ satisfies the desired properties. Property i) is already given by \eqref{flux11}.
 Moreover, by the elementary estimates
 \[
 %\|\rho_\eps\|_{L^\infty(\mathbb R^3)}\le \eps^{-3}\|\rho\|_{L^\infty(\mathbb R^3)},\quad 
  \|\rho_\eps\|_{W^{k,\infty}(\mathbb R^3)}\le (k+1)\,\eps^{-3-k}\,\|\rho\|_{W^{k,\infty}(\mathbb R^3)},
  \qquad k=0,1,2,
  %\quad  \|\rho_\eps\|_{W^{2,\infty}(\mathbb R^3)}\le 3\eps^{-5}\|\rho\|_{W^{2,\infty}(\mathbb R^3)}
 \]
 we get
 \begin{equation}\label{limith}\begin{aligned}
 \|\v^{\eps_{n(j)}}\|_{W^{k,\infty}(\Omega')}&\le \|\v\|_{L^1(\Omega')}\,\|\rho_{\eps_{n(j)}}\|_{W^{k,\infty}(\mathbb R^3)}\le (k+1)\,\eps_{n(j)}^{-3-k} \, \|\v\|_{L^1(\Omega')}\, \|\rho\|_{W^{k,\infty}(\mathbb R^3)},
\\&= c^{-3-k}\,(k+1)\,{n(j)}^{3+k} \, \|\v\|_{L^1(\Omega')}\, \|\rho\|_{W^{k,\infty}(\mathbb R^3)},\qquad k=0,1,2,
\end{aligned} \end{equation}
therefore
 \begin{equation}\label{exte}\begin{aligned}
h_j\|\v^{\eps_{n(j)}}\|_{W^{1,+\infty}(\Omega')}
%&\le h_j\|\v\|_{L^1(\Omega')}\|\rho_{\eps_{n(j)}}\|_{W^{1,+\infty}(\mathbb R^3)}\\&\le { 2h_j}{\eps_{n(j)}^{-4}}\,\|\v\|_{L^1(\Omega')} \|\rho\|_{W^{1,\infty}(\mathbb R^3)}
&\le{ 2h_j}\,c^{-4}\,{{n(j)}^{4}}\,\|\v\|_{L^1(\Omega')} \|\rho\|_{W^{1,\infty}(\mathbb R^3)}
\\&\le2c^{-4}\,h_j^{3/5}\,\|\v\|_{L^1(\Omega')} \|\rho\|_{W^{1,\infty}(\mathbb R^3)},
\end{aligned}\end{equation}
 so that
% \begin{equation}\label{limh}
 $ h_j\|\v^{\eps_{n(j)}}\|_{W^{1,+\infty}(\Omega')}$ vanishes as $j\to+\infty$ and then \eqref{flux33} implies property ii).
On the other hand,  \eqref{limith} similarly implies
\begin{equation}\label{exte2}
h_j \|\v^{\eps_{n(j)}}\|_{W^{1,\infty}(\Omega')}\|\v^{\eps_{n(j)}}\|_{L^{\infty}(\Omega')}\le 2c^{-7}\,h_j^{3/10}\,\|\v\|_{L^1(\Omega')}^2 \|\rho\|_{W^{2,\infty}(\mathbb R^3)}^2
\end{equation} 
and
\begin{equation}\label{exte3}
h_j \|\v^{\eps_{n(j)}}\|_{W^{1,\infty}(\Omega')}\|\v^{\eps_{n(j)}}\|_{W^{2,\infty}(\Omega')}\le 6c^{-9}\,h_j^{1/10}\, \|\v\|_{L^1(\Omega')}^2 \|\rho\|_{W^{2,\infty}(\mathbb R^3)}^2.
 \end{equation}
 Thanks to \eqref{exte}, \eqref{exte2} and \eqref{exte3},  from \eqref{flux22} and \eqref{flux44}
 we obtain 
 \[
 \lim_{j\to+\infty} \|\v_j-\v^{\eps_{n(j)}}\|_{W^{1,\infty}(\Omega)}=0.
 \]
This entails, since $\v^\eps\to \v$ in $H^1(\Omega,\mathbb R^3)$ as $\eps\to 0$ and since $\eps_{n(j)}\to 0$ as $j\to+\infty$,
 \[
 \lim_{j\to+\infty} \|\v_j-\v\|_{H^{1}(\Omega)}\le  \lim_{j\to+\infty} \|\v_j-\v^{\eps_{n(j)}}\|_{H^{1}(\Omega)}+ \lim_{j\to+\infty} \|\v-\v^{\eps_{n(j)}}\|_{H^{1}(\Omega)}=0
 \]
 thus proving iii).
% 
% 
% 
% 
%
% 
% Thanks to the explicit form of the estimates \eqref{flux2}, \eqref{flux3}, \eqref{flux4}, it is clear that we may now choose a decreasing vanishing sequence   $(\eps_j)_{j\in\mathbb N}\subset(0,1)$ such that   the right hand sides of such estimates still go to zero as $j\to+\infty$. Indeed, since
%\[
%h_j\|\v^{\eps_j}\|_{W^{1,+\infty}(\Omega')}\le h_j\|\v\|_{L^1(\Omega')}\|\rho_{\eps_j}\|_{W^{1,+\infty}(\mathbb R^3)}\le { 2h_j}{\eps_j^{-4}}\,\|\v\|_{L^1(\Omega')} \|\rho\|_{W^{1,\infty}(\mathbb R^3)}
%\]
%it is enough to choose $\eps_j$ in such a way that the above right hand side vanishes as $j\to+\infty$.
%In this way, by defining the sequence $\v_j:=\v^{\eps_j}_{h_j}$, we get $h_j\nabla\v_j\to 0$ in $L^\infty(\om)$ thaks to \eqref{flux3}, and
%\[
%\|\v_j-\v\|_{H^1(\om)}\le\|\v_j-\v^{\eps_j}\|_{H^1(\om)}+\|\v^{\eps_j}-\v\|_{H^1(\om)}\to0
%\]
%as $j\to+\infty$
%(by \eqref{flux2}, $\v_j-\v^{\eps_j}$ converge to zero in $W^{1,\infty}(\om)$ as well). 
\end{proof}

\begin{lemma}\lab{upbd}{\bf (Upper bound)}. Assume \eqref{OMEGA}, \eqref{infty}, \eqref{framind}, \eqref{Z1}, \eqref{reg},    
\eqref{coerc}. Let $(h_j)_{j\in\mathbb N}\subset(0,1)$ be a vanishing sequence. For every $\v\in W^{1,p}(\Omega,\mathbb R^3)$ there exists a sequence $(\v_j)_{j\in\mathbb N}\subset W^{1,p}(\Omega,\mathbb R^3)$ such that $\v_j\wconv \v \hbox { \rm  weakly in}\  W^{1,p}(\Omega,\mathbb R^3)$ as $j\to+\infty$ and
\[ \limsup_{j\to +\infty} {\mathcal F}_{h_j}^I(\v_j) \le {\mathcal E}^I(\v).\]
\end{lemma}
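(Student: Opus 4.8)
The plan is as follows. If $\v\notin H^1_{\mathrm{div}}(\Omega,\mathbb R^3)$, then $\mathcal E^I(\v)=+\infty$ and there is nothing to prove: the constant sequence $\v_j\equiv\v$ lies in $W^{1,p}(\Omega,\mathbb R^3)$, converges weakly to $\v$ in $W^{1,p}(\Omega,\mathbb R^3)$, and trivially satisfies $\limsup_j\mathcal F^I_{h_j}(\v_j)\le+\infty=\mathcal E^I(\v)$. Hence the only substantial case is $\v\in H^1_{\mathrm{div}}(\Omega,\mathbb R^3)$, i.e.\ $\v\in H^1(\Omega,\mathbb R^3)$ with $\dv\v=0$, which is exactly the case in which $\mathcal E^I(\v)$ is finite. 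For such $\v$, I take $(\v_j)_{j\in\mathbb N}\subset W^{2,\infty}(\Omega,\mathbb R^3)$ to be the approximating sequence furnished by Lemma \ref{Kato}, so that (i) $\det(\mathbf I+h_j\nabla\v_j)=1$ for every $j$, (ii) $h_j\|\nabla\v_j\|_{L^\infty(\Omega)}\to0$, and (iii) $\v_j\to\v$ strongly in $H^1(\Omega,\mathbb R^3)$. Since $\Omega$ is bounded and $p\le2$, (iii) yields $\v_j\to\v$ strongly in $W^{1,p}(\Omega,\mathbb R^3)$, in particular $\v_j\wconv\v$ weakly in $W^{1,p}(\Omega,\mathbb R^3)$, and, as $\mathcal L$ is bounded on $W^{1,p}(\Omega,\mathbb R^3)$, also $\mathcal L(\v_j)\to\mathcal L(\v)$.

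Next I estimate the elastic part. By (i), and since $\mathcal W^I(x,\mathbf F)=\mathcal W(x,\mathbf F)$ whenever $\det\mathbf F=1$ (recall $\mathcal W(x,\mathbf F)=\mathcal W^I(x,(\det\mathbf F)^{-1/3}\mathbf F)$), we have $\mathcal W^I(x,\mathbf I+h_j\nabla\v_j)=\mathcal W(x,\mathbf I+h_j\nabla\v_j)$ a.e.\ in $\Omega$. By (ii), for $j$ large enough $\mathbf I+h_j\nabla\v_j(x)$ belongs, for a.e.\ $x\in\Omega$, to the neighborhood $\mathcal U$ of $SO(3)$ appearing in \eqref{reg}, so the Taylor estimate \eqref{regW} applies pointwise with $\mathbf B=\nabla\v_j(x)$ and $h=h_j$; dividing it by $h_j^2$ and integrating over $\Omega$ gives
\[
\Big|\,\frac1{h_j^2}\int_\Omega\mathcal W(x,\mathbf I+h_j\nabla\v_j)\,dx-\frac12\int_\Omega\mathbb E(\v_j)^T\,D^2\mathcal W(x,\mathbf I)\,\mathbb E(\v_j)\,dx\,\Big|\le\int_\Omega\omega\big(h_j|\nabla\v_j|\big)\,|\nabla\v_j|^2\,dx.
\]
The right-hand side is bounded by $\omega\big(h_j\|\nabla\v_j\|_{L^\infty(\Omega)}\big)\,\|\nabla\v_j\|_{L^2(\Omega)}^2$, and this tends to $0$: indeed $\|\nabla\v_j\|_{L^2(\Omega)}$ is bounded by (iii), while $h_j\|\nabla\v_j\|_{L^\infty(\Omega)}\to0$ by (ii) and $\omega$ is a modulus of continuity with $\omega(0^+)=0$.

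It remains to pass to the limit in the leading quadratic term. By (iii), $\mathbb E(\v_j)\to\mathbb E(\v)$ strongly in $L^2(\Omega,\mathbb R^{3\times3})$, and since $|D^2\mathcal W(x,\mathbf I)|\le K$ a.e.\ by \eqref{reg}, the functional $\mathbf F\mapsto\int_\Omega\mathbf F^T\,D^2\mathcal W(x,\mathbf I)\,\mathbf F\,dx$ is continuous on $L^2(\Omega,\mathbb R^{3\times3})$; hence $\tfrac12\int_\Omega\mathbb E(\v_j)^T\,D^2\mathcal W(x,\mathbf I)\,\mathbb E(\v_j)\,dx\to\tfrac12\int_\Omega\mathbb E(\v)^T\,D^2\mathcal W(x,\mathbf I)\,\mathbb E(\v)\,dx$. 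Because $\dv\v=0$ forces $\mathrm{Tr}\,\mathbb E(\v)=0$, the last integral equals $\int_\Omega\mathcal Q^I(x,\mathbb E(\v))\,dx$. Combining the three convergences, $\mathcal F^I_{h_j}(\v_j)\to\int_\Omega\mathcal Q^I(x,\mathbb E(\v))\,dx-\mathcal L(\v)=\mathcal E^I(\v)$, which is even stronger than the asserted $\limsup$ inequality.

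I do not expect a genuine obstacle here: the delicate construction — a recovery sequence that is simultaneously exactly isochoric ($\det(\mathbf I+h_j\nabla\v_j)=1$), bounded in $H^1$ with $\v_j\to\v$, and with $h_j\|\nabla\v_j\|_{L^\infty(\Omega)}\to0$ — has already been carried out, via the flow of the divergence-free field, in Lemmas \ref{newflowlemma} and \ref{Kato}. The only point requiring some care above is that property (ii) is precisely what makes the Taylor remainder in \eqref{regW} both legitimately applicable (the argument stays in $\mathcal U$) and infinitesimal; beyond that, the argument is the standard quadratic passage to the limit.
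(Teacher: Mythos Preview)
Your proof is correct and follows essentially the same approach as the paper: reduce to $\v\in H^1_{\mathrm{div}}(\Omega,\mathbb R^3)$, take the sequence from Lemma~\ref{Kato}, use property (i) to replace $\mathcal W^I$ by $\mathcal W$, apply the Taylor estimate \eqref{regW} with the remainder controlled via (ii), and pass to the limit in the quadratic term using the strong $H^1$ convergence (iii). The only cosmetic difference is that you write the quadratic form directly with $\mathbb E(\v_j)$ while the paper writes it with $\nabla\v_j$; these coincide by frame indifference.
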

\begin{proof}
It is enough to prove the result in case $\v\in H^1_{\dv}(\Omega)$. 
Let us define $\mathcal E:H^1(\om,\R^3)\to\mathbb R$ as
\[
\mathcal E(\u):=\frac12\int_\Omega \mathbb E(\u)\,D^2\mathcal W(x,\mathbf I)\,\mathbb E(\u)\,dx-\mathcal L(\u).
\]
We take the sequence $(\v_j)_{j\in\mathbb N}$ from Lemma \ref{Kato}. %Therefore, we have $\det(\mathbf I+h_j\nabla\v_j)=1$ for any $j\in\mathbb N$. Moreover,  $\v_j\to\v$ in $H^1(\Omega)$ and   $h_j\nabla \v_j\to0$ in $L^\infty(\Omega)$ as $j\to+\infty$. 
 Property ii) of Lemma \ref{Kato} yields   $\mathbf I+h_j\nabla\v_j\in\mathcal U$ for a.e. $x$ in $\Omega$ if $j$ is large enough, where $\mathcal U$ is the neighbor of $SO(3)$ that appears in \eqref{reg}.    
 In particular, $D^2\mathcal W(x,\cdot)\in C^2(\mathcal U)$ for a.e. $x\in\om$ 
 %and    there is an increasing function $\omega:[0,+\infty)\to\mathbb R$ such that $\lim_{y\to0}\omega(y)=0$ and $|D^2\mathcal W(x,\mathbf I+\mathbf F)-D^2\mathcal W(x,\mathbf I)|\le \omega(|\mathbf F|)$ for any $\mathbf F\in \mathcal U$ and for a.e. $x\in\om$. 
 and we make use of \eqref{regW} together with
  $\det(\mathbf I+h_j\nabla\v_j)=1$ to obtain
\begin{equation*}\label{nee}\begin{aligned}
\limsup_{j\to+\infty} \mathcal |\mathcal F^I_{h_j}(\v_j)-\mathcal E(\v_j)|&\le\limsup_{j\to+\infty}\int_{\Omega} \left|\frac{1}{h_j^2}\mathcal W^I(x,\mathbf I+h_j\nabla\v_j)-\frac12\,\nabla\v_j^T D^2\mathcal W(x,\mathbf I) \nabla\v_j\right|\,dx\\
&=\limsup_{j\to+\infty}\int_{\Omega} \left|\frac{1}{h_j^2}\mathcal W(x,\mathbf I+h_j\nabla\v_j)-\frac12\,\nabla\v_j^T D^2\mathcal W(x,\mathbf I) \nabla\v_j\right|\,dx\\
&\le\limsup_{j\to+\infty}\int_{\Omega}\omega(h_j|\nabla\v_j|)\,|\nabla\v_j|^2\,dx\\&\le\limsup_{j\to+\infty}\,\|\omega(h_j\nabla \v_j)\|_{L^\infty(\om)}\int_{\om} |\nabla\v_j|^2\,dx =0.
\end{aligned}\end{equation*}
The limit in the last line is zero since $h_j\nabla \v_j\to0$ in $L^\infty(\Omega)$, since $\omega$ is increasing with $\lim_{t\to0^+}\omega(t)\to 0$  and since $(\v_j)_{j\in\mathbb N}$ is converging in $H^1(\Omega)$ as $j\to+\infty$ by Lemma \ref{Kato} .
But the  $H^1(\Omega)$ convergence also entails $\mathcal E(\v_j)\to\mathcal E(\v)$ as $j\to+\infty$.
Hence, 
\[\begin{aligned}
\limsup_{j\to+\infty}
|\mathcal F^{I}_{h_j}(\v_j)- \mathcal E^I(\v)|
&=\limsup_{j\to+\infty} |\mathcal F_{h_j}^{I}(\v_j)- \mathcal E(\v)|\\&\le \limsup_{j\to+\infty} |\mathcal F_{h_j}^{I}(\v_j)- \mathcal E(\v_j)|+\limsup_{j\to+\infty}
|\mathcal E(\v_j)- \mathcal E(\v)|=0.
\end{aligned}\]
Therefore, along the sequence $(\v_j)_{j\in\mathbb N}$ provided by Lemma \ref{Kato}, we get $\mathcal F_{h_j}^I(\v_j)\to\mathcal E^I(\v)$ as $j\to+\infty$. The result is proven.
\end{proof}
\section{ Convergence of minimisers}
We show that $\mathcal E^I$ and $\mathcal F^I$ have the same minimizers thus concluding the proof of the main result.
\begin{lemma}
\label{linel} Assume \eqref{OMEGA},\eqref{globalequi}, \eqref{comp}, \eqref{infty}, \eqref{framind}, \eqref{Z1}, \eqref{reg},    
\eqref{coerc}.
%Assume that \eqref{globalequi} and \eqref{comp} hold true.
 On $W^{1,p}(\Omega,\mathbb R^3)$ we have
\beeq\lab{equalmin}
 \min {\mathcal F}^I\ =\ \min\mathcal E^I
\eneq
and
\begin{equation}
\label{equivmin}
\displaystyle\argmin {\mathcal F}^I\ =\ \argmin{\mathcal E^I}.
\end{equation}
\end{lemma}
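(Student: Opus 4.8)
The plan is to establish \eqref{equivmin} first and then deduce \eqref{equalmin} as an immediate consequence. Since $\mathcal F^I(\v)\le\mathcal E^I(\v)$ for every $\v\in W^{1,p}(\Omega,\mathbb R^3)$ (choosing $\mathbf W=\mathbf 0$ in the infimum defining $\mathcal F^I$), we automatically have $\inf\mathcal F^I\le\inf\mathcal E^I$; so the entire content is to show that any minimizer of $\mathcal F^I$ is divergence-free, hence a minimizer of $\mathcal E^I$ with the same value. First I would record that $\mathcal F^I$ does attain its minimum: by Lemma \ref{compactness} (compactness) applied to a minimizing sequence (which is in particular an almost-minimizing sequence in the sense of \eqref{quasi}, after noting $\inf\mathcal F^I$ is finite by Lemma \ref{lemmabound} and Lemma \ref{upbd}), the infinitesimal strains are bounded in $L^p$, so by Lemma \ref{corcurl} and Korn's inequality \eqref{secondkorn} a subsequence converges weakly in $W^{1,p}$ modulo rigid motions; the lower bound Lemma \ref{lowerbd} then gives lower semicontinuity of $\mathcal F^I$ along this sequence (note $\mathcal F^I$ is invariant under infinitesimal rigid displacements by \eqref{globalequi}), yielding a minimizer $\v\in H^1(\Omega,\mathbb R^3)$.

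The key step is the following: let $\v\in\argmin\mathcal F^I$, and let $\mathbf W_\v\in\mathbb R^{3\times3}_{\rm skew}$ be the (unique, by the first Remark after Theorem \ref{mainth1}) minimizer of $\mathbf W\mapsto\int_\Omega\mathcal Q^I(x,\mathbb E(\v)-\tfrac12\mathbf W^2)\,dx$. I claim $\mathbf W_\v=\mathbf 0$. Suppose not. Consider the competitor $\tilde\v(x):=\v(x)+\tfrac12\mathbf W_\v^2 x$. Since $\mathbb E(\tfrac12\mathbf W_\v^2 x)=\tfrac12\mathbf W_\v^2$ (a constant symmetric matrix), we get $\mathbb E(\tilde\v)=\mathbb E(\v)+\tfrac12\mathbf W_\v^2$, hence for any $\mathbf W\in\mathbb R^{3\times3}_{\rm skew}$,
\[
\int_\Omega\mathcal Q^I\!\left(x,\mathbb E(\tilde\v)-\tfrac12\mathbf W^2\right)dx=\int_\Omega\mathcal Q^I\!\left(x,\mathbb E(\v)+\tfrac12\mathbf W_\v^2-\tfrac12\mathbf W^2\right)dx,
\]
and choosing $\mathbf W=\mathbf 0$ on the right makes this equal to $\int_\Omega\mathcal Q^I(x,\mathbb E(\v)+\tfrac12\mathbf W_\v^2)\,dx$. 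The subtle point is the finiteness/trace constraint: $\mathcal Q^I(x,\cdot)$ is finite only on trace-free matrices, so I must check that $\mathrm{Tr}(\mathbb E(\v)-\tfrac12\mathbf W_\v^2)=0$ a.e. — this holds because $\v\in\argmin\mathcal F^I$ forces $\mathcal F^I(\v)<+\infty$, which by the discussion after \eqref{DTfunc} means $\dv\v$ is a nonpositive constant equal to $\tfrac12\mathrm{Tr}\,\mathbf W_\v^2$, so the argument of $\mathcal Q^I$ in the minimizing term is indeed trace-free, and correspondingly $\dv\tilde\v=\dv\v-\tfrac12|\mathbf W_\v|^2\cdot$(something)$\,$; more precisely $\mathrm{Tr}(\mathbb E(\tilde\v))=\mathrm{Tr}(\mathbb E(\v))+\tfrac12\mathrm{Tr}\,\mathbf W_\v^2=2\,\mathrm{Tr}(\mathbb E(\v))\le 0$ so $\tilde\v$ is still admissible for $\mathcal F^I$. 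Then
\[
\mathcal F^I(\tilde\v)\le\int_\Omega\mathcal Q^I\!\left(x,\mathbb E(\v)+\tfrac12\mathbf W_\v^2\right)dx-\mathcal L(\tilde\v)=\mathcal F^I(\v)-\mathcal L\!\left(\tfrac12\mathbf W_\v^2 x\right)=\mathcal F^I(\v)-\tfrac12\mathcal L(\mathbf W_\v^2 x),
\]
where I used $\mathcal F^I(\v)=\int_\Omega\mathcal Q^I(x,\mathbb E(\v)-\tfrac12\mathbf W_\v^2)\,dx-\mathcal L(\v)$ is the minimum over $\mathbf W$, and the already-chosen competitor $\mathbf W=\mathbf 0$ in the shifted problem. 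Wait — I need to be careful: the inequality $\int_\Omega\mathcal Q^I(x,\mathbb E(\v)+\tfrac12\mathbf W_\v^2)\,dx\le\int_\Omega\mathcal Q^I(x,\mathbb E(\v)-\tfrac12\mathbf W_\v^2)\,dx$ is the wrong direction in general, so instead the cleaner route is: in $\mathcal F^I(\tilde\v)$, take the competitor $\mathbf W=\mathbf W_\v$ (not $\mathbf 0$), giving $\int_\Omega\mathcal Q^I(x,\mathbb E(\v))\,dx$; that is, $\mathcal F^I(\tilde\v)\le\int_\Omega\mathcal Q^I(x,\mathbb E(\v))\,dx-\mathcal L(\v)-\tfrac12\mathcal L(\mathbf W_\v^2 x)$. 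By \eqref{comp}, $\mathcal L(\mathbf W_\v^2 x)<0$, so $-\tfrac12\mathcal L(\mathbf W_\v^2 x)>0$; thus I cannot conclude directly. The correct comparison is the other way: since $\v$ minimizes $\mathcal F^I$, $\mathcal F^I(\v)\le\mathcal F^I(\tilde\v)\le\int_\Omega\mathcal Q^I(x,\mathbb E(\v))\,dx-\mathcal L(\v)-\tfrac12\mathcal L(\mathbf W_\v^2 x)$, while also $\mathcal F^I(\v)=\int_\Omega\mathcal Q^I(x,\mathbb E(\v)-\tfrac12\mathbf W_\v^2)\,dx-\mathcal L(\v)\le\int_\Omega\mathcal Q^I(x,\mathbb E(\v))\,dx-\mathcal L(\v)$ by optimality of $\mathbf W_\v$; combining gives $0\le-\tfrac12\mathcal L(\mathbf W_\v^2 x)$ only if the first inequality is tight, which is not automatic. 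I therefore expect the real argument to go through the optimality (Euler–Lagrange) condition for $\mathbf W_\v$ inside the definition of $\mathcal F^I$ together with \eqref{comp}, exploiting that perturbing $\v$ by $\tfrac12\mathbf W_\v^2 x$ shifts the inner minimization problem's optimal $\mathbf W$ and strictly decreases the energy unless $\mathbf W_\v=\mathbf 0$; the delicate bookkeeping of which matrix to plug in where, and getting the sign of $\mathcal L(\mathbf W_\v^2 x)$ to work in one's favor, is the main obstacle.

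Once $\mathbf W_\v=\mathbf 0$ is established, $\mathcal F^I(\v)=\int_\Omega\mathcal Q^I(x,\mathbb E(\v))\,dx-\mathcal L(\v)=\mathcal E^I(\v)$, and since $\mathcal F^I\le\mathcal E^I$ pointwise while $\mathcal F^I(\v)=\mathcal E^I(\v)=\min\mathcal F^I\le\min\mathcal E^I\le\mathcal E^I(\v)$, all inequalities are equalities: $\min\mathcal F^I=\min\mathcal E^I$ and $\v\in\argmin\mathcal E^I$. This gives $\argmin\mathcal F^I\subseteq\argmin\mathcal E^I$. For the reverse inclusion, if $\v\in\argmin\mathcal E^I$ then $\mathcal F^I(\v)\le\mathcal E^I(\v)=\min\mathcal E^I=\min\mathcal F^I$, forcing $\v\in\argmin\mathcal F^I$. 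This closes \eqref{equivmin} and \eqref{equalmin} simultaneously. I would also note the existence of a minimizer of $\mathcal E^I$ is then free, inherited from that of $\mathcal F^I$, so every set in sight is nonempty and the statement is not vacuous.
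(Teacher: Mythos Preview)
Your key step --- showing $\mathbf W_\v=\mathbf 0$ for a minimizer $\v$ of $\mathcal F^I$ --- is left incomplete, and the obstruction you run into comes from the sign of your competitor. You take $\tilde\v=\v+\tfrac12\mathbf W_\v^2 x$, which pushes the load term the wrong way. The right move is to \emph{subtract}: set $\tilde\v:=\v-\tfrac12\mathbf W_\v^2 x$. Then $\mathbb E(\tilde\v)=\mathbb E(\v)-\tfrac12\mathbf W_\v^2$, which is trace-free (since $\dv\v=\tfrac12\Tr\mathbf W_\v^2$), so $\tilde\v$ is divergence-free and admissible for $\mathcal E^I$. Choosing $\mathbf W=\mathbf 0$ in the definition of $\mathcal F^I(\tilde\v)$ gives
\[
\mathcal F^I(\tilde\v)\le \mathcal E^I(\tilde\v)=\int_\Omega\mathcal Q^I\big(x,\mathbb E(\v)-\tfrac12\mathbf W_\v^2\big)\,dx-\mathcal L(\v)+\tfrac12\,\mathcal L(\mathbf W_\v^2 x)=\mathcal F^I(\v)+\tfrac12\,\mathcal L(\mathbf W_\v^2 x).
\]
If $\mathbf W_\v\neq\mathbf 0$ then $\mathcal L(\mathbf W_\v^2 x)<0$ by \eqref{comp}, so $\mathcal F^I(\tilde\v)<\mathcal F^I(\v)=\min\mathcal F^I$, a contradiction. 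This is precisely the computation the paper performs (it phrases it as $\mathcal F^I(\v)=\mathcal E^I(\v-\z_{\mathbf W_\v})-\mathcal L(\z_{\mathbf W_\v})>\min\mathcal E^I=\min\mathcal F^I$, having first established \eqref{equalmin}); no Euler--Lagrange condition for $\mathbf W_\v$ is needed.

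A second, smaller gap: your existence argument for $\min\mathcal F^I$ invokes Lemma~\ref{compactness}, but that lemma concerns sequences satisfying $\mathcal F^I_{h_j}(\v_j)-\inf\mathcal F^I_{h_j}\to 0$, not minimizing sequences for $\mathcal F^I$ itself, so it does not apply. The paper instead first obtains a minimizer of $\mathcal E^I$ by the direct method (coercivity from \eqref{ellipticity} and \eqref{elle}, weak $L^2$ lower semicontinuity of the quadratic part), and then proves \eqref{equalmin} via the chain
\[
\min\mathcal E^I\ge\inf\mathcal F^I=\min_{\mathbf W}\min_{\v}\Big\{\mathcal E^I(\v-\z_{\mathbf W})-\mathcal L(\z_{\mathbf W})\Big\}=\min\mathcal E^I-\max_{\mathbf W}\mathcal L(\z_{\mathbf W})\ge\min\mathcal E^I,
\]
which simultaneously shows $\min\mathcal F^I$ is attained. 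With \eqref{equalmin} in hand, the $\argmin$ inclusions follow exactly as you outline at the end.
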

\begin{proof} 
Existence of minimizers of $\mathcal E^I$ on $W^{1,p}(\om,\R^3)$ follows by standard arguments. Indeed, \eqref{ellipticity} and \eqref{elle} imply that a  minimizing sequence $(\u_n)_{n\in\mathbb N}\subset H^1_{\mathrm{div}}(\om,\R^3)$ of $\mathcal E^I$ satisfies $\sup_{n\in\mathbb N}\|\mathbb E(\u_n)\|_{L^2(\om)}<+\infty$, and  Lemma \ref{corcurl} entails the existence of $\mathbf u\in H^1(\om,\mathbb R^3)$ such that up to subsequences  $\mathbb E(\u_n)\to\mathbb E(\mathbf u)$ weakly in $L^2(\om,\R^{3\times 3})$. By \eqref{kornpoi} and \eqref{globalequi} we deduce $\mathcal L(\u_n)\to\mathcal L(\u)$, up to subsequences, as $n\to+\infty$. By the weak $L^2(\om)$ lower semicontinuity of $\mathbf F\mapsto\int_\om \mathbf F^T\,D^2\mathcal W(x,\mathbf I)\,\mathbf F\,dx$ we deduce that $\u$ is a minimizer of $\mathcal E^I$ over $W^{1,p}(\om,\mathbb R^3)$. 
Moreover, first order minimality conditions show that all the minimizers of $\mathcal E^I$ have the same infinitesimal strain tensor. In particular, minimizers of $\mathcal E^I$ are unique up to rigid displacements. 
\KKK

 By taking into account that
${\mathcal F}^I(\v)\leq\mathcal E^I(\v)$ for every $\v\in H^1(\om)$, and setting
$\,\mathbf \z_{\mathbf W}(x):=\textstyle\frac{1}{2}\mathbf W^{2}x$
for every $\mathbf W\in \mathbb R^{3\times 3}_{\rm skew}$, we get
$\, \E(\z_{\mathbf W})= \frac{1}{2}\mathbf W^{2}\,$ and
\begin{equation*}
\label{minFvsminE}\begin{aligned}
%\begin{array}{ll}
\displaystyle
\min_{\v\in H^1(\om)} \mathcal E^I(\v) \ &\geq \
\inf_{\v\in H^1(\om)} {\mathcal F}^I(\v)
%\\
%&
=
\inf_{\v\in H^1(\om)}\left\{\min_{\mathbf W\in \R^{3\times 3}_{\rm skew}}\left\{\displaystyle\int_\Omega \mathcal Q^I( x, \,\mathbb E(\v)-\textstyle\frac{1}{2}\mathbf W^{2})\, dx- \mathcal L(\v) \right\}\right\}
\\
& =
\displaystyle
\min_{\mathbf W\in \mathbb R^{3\times 3}_{\rm skew}}\left\{\min_{\v\in H^1(\om)}\left\{\displaystyle\int_\Omega \mathcal Q^I(x, \,\mathbb E(\v)-\textstyle\frac{1}{2}\mathbf W^{2})\, dx- \mathcal L(\v) \right\}\right\}\\
&=
\displaystyle\min_{\mathbf W\in\mathbb R^{3\times 3}_{\rm skew}}\left\{\min_{\v\in H^1(\om)}\left\{\displaystyle\int_\Omega \mathcal Q^I(x, \,\mathbb E(\v-\z_{\mathbf W}))\, dx- \mathcal L(\v-\z_{\mathbf W})-\mathcal L(\z_{\mathbf W}) \right\}\right\}\\
&=\displaystyle\min_{\z\in H^1(\om)}\mathcal E^I(\z)\ \,-\max_{\mathbf W\in \R^{3\times 3}_{\rm skew}}
\mathcal L(\z_{\mathbf W})
\ \ge \ \min_{ H^1(\om)}\mathcal E^I
\end{aligned}
\end{equation*}
where last inequality follows by $\mathcal L(z_{\mathbf W})\leq 0$.
Therefore also $\min\mathcal F^I$ exists on $W^{1,p}(\om,\R^3)$ and  \eqref{equalmin} is proved so we are left to show \eqref{equivmin}.

First assume $\v\in \argmin \mathcal F^I$ and let
\begin{equation}\label{Wv}
\displaystyle\mathbf W_{\v}\in \argmin \left\{\int_{\om}\mathcal Q^I\Big( x,\, \mathbb E(\v)-\textstyle\frac{1}{2}\mathbf W^{2}\Big)\, dx: \ \mathbf W\in \R^{3\times 3}_{\rm skew}\right\}.
\end{equation}
If $\mathbf W_{\v}\neq \mathbf 0$ then, by setting $\z_{\mathbf W_{\v}}(x)=\frac{1}{2}\mathbf W_{\v}^{2}\,x$
we get $\mathbb E(\z_{\mathbf W_\v})=\nabla\z_{\mathbf W_\v}= \frac{1}{2}\mathbf W_\v^{2}$ and, by compatibility \eqref{comp} we obtain
\begin{equation}\label{6.6}
\begin{array}{ll}
&\displaystyle\ \min \mathcal F^I= \mathcal F^I(\v)= \int_{\om}\mathcal Q^I\Big(x,\,\mathbb E(\v-\textstyle\z_{\mathbf W_{\v}})\Big)\, dx-\mathcal L(\v- \z_{\mathbf W_{\v}})-\mathcal L(\z_{\mathbf W_{\v}})\,= \vspace{0.2cm}\\
%&\\
&\displaystyle
\mathcal E^I (\v- \z_{\mathbf W_{\v}})\,-\, \mathcal L(\z_{\mathbf W_{\v}})
\geq\ \min\, \mathcal E^I\,-\, \mathcal L(\z_{\mathbf W_{\v}}) \,>\,  \min\, \mathcal E^I\,,
\end{array}
\end{equation}
 a contradiction. Therefore $\mathbf W_{\v}=\mathbf 0$, $\z_{\mathbf W_{\v}}=\mathbf 0$,
and all the inequalities in \eqref{6.6} turn out to be equalities, hence we get ${\mathcal F}^I(\v)=\mathcal E^I(\v)=\min \mathcal E^I=\min {\mathcal  F}^I$,
therefore $\v\in \argmin\mathcal E$
and $\argmin{\mathcal F}^I \subseteq \argmin\mathcal E^I.$
 In order to show the opposite inclusion, we assume $\v\in \argmin \mathcal E^I$ and still referring to the choice \eqref{Wv} we get $2\dv\v=0=\tr \mathbf W_{\v}^2=-|\mathbf W_{\v}|^2$. 
Therefore $\mathcal E^I(\v)={\mathcal F}^I(\v)$ and $\v\in \argmin{\mathcal F}^I$.
\end{proof}
\begin{remark}\rm
{The proof of lemma \ref{linel} shows that, although Theorem \ref{mainth1} is not true if \eqref{comp} is replaced by the weaker condition
$$\mathcal L(\mathbf W^2 x)\le 0 \qquad
\forall\  \mathbf W\in \R^{3\times 3}_{\rm skew},$$ still  $\mathcal E^I$ and $\mathcal F^I$ have the same minimal values under such weaker condition. As an example,  we may consider} $\mathbf f$  and $\mathbf g$ as in Remark \ref{phi}, but with $\int_\om\phi(x)\,dx=\lambda|\Omega|$ instead of $\int_\om\phi(x)\,dx<\lambda|\Omega|$. In this case $\mathcal L(\mathbf W^2x)=0$ for any $\mathbf W\in\mathbb R^{3\times3}_{\rm skew}$ and then Theorem \ref{mainth1} does not apply (see Remark \ref{rmk2.2}). However, still $\mathcal E^I(\v)$ is minimal (with minimal value $0$) if and only if $\v$ is a rigid displacement. Moreover, $\mathcal F^I$ is minimal on rigid displacements as well.
%, due to the identification of minimal values of $\mathcal E^I$ and $\mathcal F^I$ that holds even if $\mathcal L(\mathbf W^2x)=0$ for any $\mathbf W\in\mathbb R^{3\times3}_{\rm skew}$ as we may check from the proof of Lemma \ref{linel}.
\end{remark}
\KKK

\bigskip

%\begin{corollary}\label{coroll} Assume  \eqref{globalequi}, \eqref{comp} let $ \v_{*}$ be in $\argmin \F^I$
%and $\mathbf W_{\v_*}\in \mathbb R^{3\times 3}_{skew}$ defined as in \eqref{Wv}.Then ${\mathbf W}_{\v_*}= 0$.
%\end{corollary}
%\begin{proof} Let $ \v_{*}\in\argmin \F$ and ${\mathbf W}_{\v_*}$ as in \eqref{Wv}. Then $\v_{*}\in \argmin \mathcal E$ hence $0=2\dv\v_*=\tr{\mathbf W}_{\v_*}^2=-|\mathbf W_{\v_*}|^2$ as claimed.
%\end{proof}
%\subsection{Proof of Main Result}
\begin{proofth1}
We obtain \eqref{convmin}  from Lemma \ref{lemmabound}. If $(\mathbf v_j)_{j\in\mathbb N}\subset W^{1,p}(\om,\mathbb R^3)$ is a sequence  such that
\[ \label{assinf}  \lim_{j\to +\infty}\left( \mathcal F^{I}_{h_{j}}(\v_{j})-\inf_{W^{1,p}(\om,\mathbb R^3)}\mathcal F^{I}_{h_{j}}\right)= 0,
\]
then by Lemma \ref{compactness} there exists $\v_*\in H^1(\om)$ such that, up to subsequences, $\mathbb E(\v_j)\wconv \mathbb E(\v_*)$ weakly in $L^p(\om)$ hence by Lemma \ref{lowerbd} 
\begin{equation}\lab{liminf}
\displaystyle\liminf_{j\to +\infty}{\mathcal F}^I_{h_j}(\v_j)\ge {\mathcal F}^I(\v_*).
\end{equation}
On the other hand by Lemma \ref{upbd}  for every $\v\in H^1(\om)$ there exists a sequence $(\v_j)_{j\in\mathbb N}\subset W^{1,p}(\om)$ such that 
$\mathbb E(\v_j)\wconv \mathbb E(\v_*)$ weakly in $L^p(\om)$ and 
\begin{equation}\lab{limsup}
\displaystyle\limsup_{j\to +\infty}{\mathcal F}^I_{h_j}(\v_j)\le {\mathcal E}^I(\v),
\end{equation}
that is, ${\mathcal F}^I(\v_*)\le {\mathcal E}^I(\v)$ for every $\v\in H^1(\om)$. Hence,   ${\mathcal F}^I(\v_*)\le \min_{W^{1,p}(\om)} {\mathcal E}^I$ and by Lemma  \ref{linel} we obtain ${\mathcal F}^I(\v_*)\le \min_{W^{1,p}(\om)} {\mathcal E}^I=\min_{W^{1,p}(\om)} {\mathcal F}^I$ so that by \eqref{liminf}, \eqref{limsup} 
\begin{equation*}
\displaystyle\lim_{j\to +\infty}{\mathcal F}^I_{h_j}(\v_j)={\mathcal F}^I(\v_*)=\min_{W^{1,p}(\om)} {\mathcal F}^I= \min_{W^{1,p}(\om)} {\mathcal E}^I
\end{equation*}
thus completing the proof.
\end{proofth1}

\subsection*{Acknowledgements} 
The authors acknowledge support from the MIUR-PRIN  project  No 2017TEXA3H.
The authors are members of the
GNAMPA group of the Istituto Nazionale di Alta Matematica (INdAM).

\end{document}